\documentclass{amsart}

\usepackage{amscd, amssymb, amsmath, amsthm}
\usepackage{mathabx} 
\usepackage{enumerate, latexsym, mathrsfs}
\usepackage{xypic}
\usepackage[
		bookmarks=true, bookmarksopen=true,%
    bookmarksdepth=3,bookmarksopenlevel=2,%
    colorlinks=true,%
    linkcolor=blue,%
    citecolor=blue]{hyperref}
%
%

\newtheorem{theorem}{Theorem}[section]
\newtheorem{lemma}[theorem]{Lemma}
\newtheorem{proposition}[theorem]{Proposition}
\newtheorem{corollary}[theorem]{Corollary}
\newtheorem{conjecture}[theorem]{Conjecture}

\theoremstyle{definition}
\newtheorem{definition}[theorem]{Definition}

\newtheorem{example}[theorem]{Example}

\newtheorem{remark}[theorem]{Remark}

\numberwithin{equation}{section}

\newcommand{\Natural}{{\mathbb N}}
\newcommand{\Real}{{\mathbb R}}
\newcommand{\Rational}{{\mathbb Q}}
\newcommand{\Complex}{{\mathbb C}}
\newcommand{\Integral}{{\mathbb Z}}

\title[{Virtual Euler class one}]{
A criterion for virtual Euler class one}
     
\author[Yi Liu]{%
        Yi Liu} 
\address{%
        Beijing International Center for Mathematical Research, Peking University\\
				Beijing 100871, China P.R.} 
\email{%
    liuyi@bicmr.pku.edu.cn}

\thanks{Partially supported by NSFC Grant 11925101, 
and National Key R\&D Program of China 2020YFA0712800}
\subjclass[2020]{Primary 57R30; Secondary 57R20,57K31}
\keywords{taut foliation, Euler class, Alexander polynomial}

\date{%
 \today} 


\begin{document}

\begin{abstract}
	Let $M$ be an oriented closed hyperbolic $3$--manifold.
	Suppose that $w$ is a rational second cohomology class of $M$ with dual Thurston norm $1$.
	Upon the existence of certain nonvanishing Alexander polynomials,
	the author shows that the pullback of $w$ to some finite cover of $M$
	is the real Euler class of some transversely oriented taut foliation on that cover.
	As application,
	the author constructs examples with first Betti number either $2$ or $3$,
	and partial examples with any first Betti number at least $4$,
	supporting Yazdi's virtual Euler class one conjecture.
\end{abstract}

\maketitle

\section{Introduction}
After disproving Thurston's Euler class one conjecture,
Yazdi posed a virtual remedy \cite[Question 9.4]{Yazdi_ecoc}.
The new conjecture is called the \emph{virtual Euler class one conjecture}.
The revised statement is as follows.

\begin{conjecture}[Yazdi]\label{virtual_ecoc}
	Let $M$ be an oriented closed hyperbolic $3$--manifold.
	For any even (integral) lattice point $w\in H^2(M;\Real)$
	of dual Thurston norm $1$,
	there exists some finite cover $\tilde{M}$ of $M$,
	such that the pullback $\tilde{w}$ of $w$ to $\tilde{M}$
	is the real Euler class of some transversely oriented taut foliation on $\tilde{M}$.	
\end{conjecture}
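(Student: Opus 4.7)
The plan is to attack the conjecture through a two-stage strategy: a covering-space reduction that lands us on a finite cover with favorable algebraic-topological properties, followed by a construction of a taut foliation whose Euler class is prescribed. These two stages are linked by the structure of the Thurston norm ball as constrained by Alexander polynomials, which is presumably where the paper's criterion will enter.

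First, I would pass to a favorable finite cover $\tilde M\to M$. By Agol's virtual special theorem and virtual fibering, hyperbolic $3$--manifold groups are LERF and RFRS, so one may arrange that $\tilde w$, the pullback of $w$, is a primitive integral class represented by a connected norm-minimizing surface $\Sigma$. By enlarging the cover further, one moreover arranges that $\tilde w$ lies in (or close to) a fibered face of the Thurston norm ball of $\tilde M$. McMullen's Alexander norm serves as the guiding tool here: non-vanishing of Alexander polynomials of suitable finite abelian covers dual to $\tilde w$ controls the combinatorics of the fibered cone around $\tilde w$, and such non-vanishing is presumably the algebraic input encoded in the criterion hinted at in the abstract.

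Second, on this cover I would construct a transversely oriented taut foliation $\mathcal F$ with $e(\mathcal F)=\tilde w$. If $\tilde w$ sits in the interior of a fibered face, the fibration itself yields a foliation by fibers whose Euler class Thurston's theory identifies. Otherwise, one applies Gabai's sutured manifold hierarchy to the pair $(\tilde M,\Sigma)$ to obtain a taut foliation with $\Sigma$ as a leaf, and then adjusts it --- by spinning along toroidal leaves, by passing through branched covers, or by combining foliations associated to nearby fibered classes --- to bring its Euler class into line with $\tilde w$. The evenness hypothesis on $w$ is naturally consistent with Euler class parity, so no obstruction arises from that side.

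The central obstacle is precisely this last adjustment. The Thurston--Euler inequality pins down $e(\mathcal F)$ only along the ray through $\tilde w$, whereas $\tilde w$ as a cohomology class constrains all of $H^2(\tilde M;\Real)$. Controlling the transverse components of $e(\mathcal F)$ requires a rich supply of nearby fibered classes whose foliations can be linearly combined or interpolated, and this richness is exactly what the Alexander-polynomial non-vanishing should guarantee. Establishing such non-vanishing uniformly over all hyperbolic $3$--manifolds and all even lattice points of dual Thurston norm $1$ is the substantive content of the conjecture and the principal obstacle to a full proof; judging from the abstract, the paper will bypass the uniform version by verifying its criterion in low first Betti number and by producing partial constructions in higher first Betti number.
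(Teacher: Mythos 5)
There is a genuine gap, and in fact the statement you are attempting is not proved in the paper at all: it is Yazdi's \emph{virtual Euler class one conjecture}, which the paper leaves open. The paper only establishes a conditional criterion (Theorem \ref{main_Alexander}: if some primitive $\psi$ dual to the interior of the face $F^\vee$ has nonvanishing Alexander polynomial $\Delta^\psi_M(t)$, then every rational interior point of $F$ is virtually realized on a finite \emph{cyclic} cover dual to $\psi$), together with examples in low first Betti number. Your own closing paragraph concedes that the uniform nonvanishing you would need is unavailable; the paper goes further and conjectures (Conjecture \ref{vanishing_AP}) that for every closed $3$--manifold with $b_1\geq 4$ some $\Delta^\psi_M(t)$ vanishes, so your strategy cannot be completed as stated even in principle without new ideas.

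Beyond the overall incompleteness, two concrete steps in your sketch fail. First, you treat $\tilde w$ as if it were a class ``represented by a connected norm-minimizing surface'' lying in or near a fibered face. But $w\in H^2(M;\Real)$ has \emph{dual} Thurston norm $1$: it is a candidate Euler class that pairs against surface classes, not a class in $H_2$ carrying the Thurston norm, so RFRS/virtual fibering and fibered-face arguments do not apply to it; the classes to which fibered/norm-minimizing considerations apply are the surface classes $\Sigma$ dual to the face $F^\vee$, which is how the paper uses them. Second, your ``adjustment'' of the Euler class (spinning, branched covers, interpolating nearby fibered classes) is exactly the unsolved difficulty and is not an argument: realizing an arbitrary rational point in the interior of a face, rather than a vertex (Corollary \ref{ecoc_vertex}) or a compact-leaf class, requires a mechanism that pins down $e(\mathscr F)$ in all of $H^2$. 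The paper's mechanism is the medley construction: stack foliated blocks with prescribed compact leaves on a finite cyclic cover, compute $\kappa_!(e(\tilde{\mathscr F}))$ by the Euler class formula of Lemma \ref{medley_construction_e}, and then use the nonvanishing of $\Delta^\psi_M(t)$ (via Lemma \ref{cyclic_Betti}) to stabilize $b_1$ along the cyclic tower so that the umkehr map becomes an isomorphism and the convex combination of vertex classes is forced to equal $\tilde w$. Without a substitute for that Betti-number control, your construction only constrains $e(\mathscr F)$ along the ray you foliate, which is precisely the obstruction you identify but do not overcome.
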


In this paper, we provide a criterion for Conjecture \ref{virtual_ecoc}
in terms of nonvanishing Alexander polynomials.
The criterion allows us to 
virtually realize any rational point 
on certain closed faces on the boundary of the dual Thurston norm unit ball.

The proof of our criterion suggests that 
the even lattice point condition 
becomes less restrictive in the virtual context.
It appears more natural to extend the statement of Conjecture \ref{virtual_ecoc}
to all rational points in $H^2(M;\Real)$ of dual Thurston norm $1$.
Note that Conjecture \ref{virtual_ecoc} has no content for $M$ of first Betti number $0$.
The conjecture holds obviously true for $M$ of first Betti number $1$, 
by Gabai's Euler class one theorem for vertices (see Corollary \ref{ecoc_vertex}).
We provide evidence to the rational point version of Conjecture \ref{virtual_ecoc}
by producing (a large variety of) examples of first Betti number $2$ or $3$.
We also obtain partial examples for any first Betti number $\geq4$.

We state our criterion as follows.
Theorem \ref{main_Alexander} is the main content, 
and Corollary \ref{main_Alexander_corollary} is supplementary.

\begin{theorem}\label{main_Alexander}
Let $M$ be an oriented closed hyperbolic $3$--manifold.
Denote by $\mathcal{B}_{\mathrm{Th}}(M)\subset H_2(M;\Real)$  the Thurston norm unit ball of $M$
and by $\mathcal{B}_{\mathrm{Th}^*}(M)\subset H^2(M;\Real)$ the dual Thurston norm unit ball of $M$.
Suppose that $F\subset \partial\mathcal{B}_{\mathrm{Th}^*}(M)$
and $F^\vee\subset \partial\mathcal{B}_{\mathrm{Th}}(M)$ 
are a dual pair of closed faces.
Suppose that $\psi\in H^1(M;\Integral)$ is a primitive cohomology class,
such that the Poincar\'e dual of $\psi$ 
lies in the cone over the interior of $F^\vee$.

If the Alexander polynomial $\Delta_M^\psi(t)$ does not vanish,
then for any rational point $w$ in the interior of $F$,
there exists some finite cyclic cover $\tilde{M}$ of $M$ dual to $\psi$,
such that the pullback $\tilde{w}$ of $w$ to $\tilde{M}$
is the real Euler class of 
some transversely oriented taut foliation on $\tilde{M}$.
\end{theorem}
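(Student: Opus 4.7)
The plan is to pass to an appropriate finite cyclic cover $\tilde{M}$ of $M$ dual to $\psi$ and construct a transversely oriented taut foliation on $\tilde{M}$ with real Euler class $\tilde{w}$ via Gabai's sutured manifold hierarchy machinery. The non-vanishing of $\Delta_M^\psi(t)$ should supply the structural control needed to make the cover cooperate with the face $F$ and with the Euler class matching.

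I would first analyze how the dual Thurston norm ball and its faces behave under cyclic covers dual to $\psi$. Because $\Delta_M^\psi(t)\neq 0$, the homology $H_1(M^\infty_\psi;\Rational)$ is torsion over $\Rational[t^{\pm1}]$, so the first Betti numbers of the degree $d$ cyclic covers $\tilde{M}_d$ stay bounded and can be read off from the cyclotomic factorization of $\Delta_M^\psi$. Under the pullback $p_d^*\colon H^2(M;\Real)\to H^2(\tilde{M}_d;\Real)$, the face $F$ embeds into a face $\tilde{F}$ of $\partial\mathcal{B}_{\mathrm{Th}^*}(\tilde{M}_d)$ whose relative interior contains $\tilde{w}=p_d^*(w)$, with dual face $\tilde{F}^\vee$ meeting the pullback of the Poincar\'e dual of $\psi$ in the interior of its cone.

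With this preparation, I would apply Gabai's foliation theorem to a Thurston norm-minimizing surface representing a suitable integral class in the cone over $\tilde{F}^\vee$. Its sutured manifold hierarchy closes up to a transversely oriented taut foliation $\tilde{\mathcal{F}}$ on $\tilde{M}_d$. The real Euler class $e(\tilde{\mathcal{F}})$ lies in the affine hull of $\tilde{F}$, but depends on choices made during the construction, notably on how Seifert fibered pieces of the hierarchy are filled in by foliations.

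The main obstacle, and the technical heart of the argument, is matching $e(\tilde{\mathcal{F}})$ to $\tilde{w}$ exactly. The parity-type obstructions that arise at this stage are precisely what produce Yazdi's counterexamples to the original Euler class one conjecture, so they must be absorbed by the cover. The virtual setting provides extra flexibility in two complementary ways: as $d$ grows, the lattice of integral classes supporting Thurston norm-minimizing surfaces on $\tilde{F}^\vee$ refines, and the nontrivial deck action on $H_1(M^\infty_\psi;\Rational)$ contributes new cohomology classes to $H^2(\tilde{M}_d;\Real)$ that can be exploited in the hierarchy. I expect the crux of the proof to consist of choosing $d$ divisible by suitable factors depending on the denominators of $w$ and on the cyclotomic content of $\Delta_M^\psi$, so that the collection of Euler classes realizable on $\tilde{F}$, together with the admissible modifications of the hierarchy, contains $\tilde{w}$ itself.
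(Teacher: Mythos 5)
Your setup matches the paper's starting point: work with finite cyclic covers dual to $\psi$, use the nonvanishing of $\Delta_M^\psi(t)$ to bound $b_1$ of these covers (Lemma \ref{cyclic_Betti}), and observe that $w$ pulls back into the interior of a face of the covering manifold's dual norm ball. But the heart of the theorem --- hitting the \emph{given} rational interior point, not merely some point of the face --- is exactly where your outline stops being an argument. A single application of Gabai's theorem to a norm-minimizing surface $\tilde S$ in the cone over $\tilde F^\vee$ only gives $\langle e(\tilde{\mathscr{F}}),[\tilde S]\rangle=-\chi(\tilde S)$, i.e.\ that $e(\tilde{\mathscr{F}})$ lies \emph{somewhere} in the dual face; there is no known way to steer where in the face it lands by ``admissible modifications of the hierarchy,'' and Yazdi's counterexamples show one cannot expect every admissible class to be realizable on a fixed manifold. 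Your closing sentence (``I expect the crux of the proof to consist of choosing $d$ divisible by suitable factors\dots'') names the gap rather than filling it. The paper's missing mechanism is the medley construction: realize each vertex $v_1,\dots,v_n$ of $F$ exactly by a taut foliation $\mathscr{F}_i$ (Corollary \ref{ecoc_vertex}); use the fully marked surface theorem (Theorem \ref{fully_marked}) to arrange that $\mathscr{F}_i$ contains a surface $S_i$ representing $\mathrm{PD}(\psi)$ as compact leaves; pass to a cyclic cover where the $S_i$ become disjoint and apply Theorem \ref{converse_norm_minimizing} to get one more foliation $\mathscr{F}'_0$ containing all of them, whose Euler class projects to some uncontrolled rational $v_0\in F$; then cyclically stack $m_0$ blocks of $\mathscr{F}'_0$ and $m_i$ blocks of $\mathscr{F}_i$ (Lemmas \ref{medley_construction}--\ref{medley_construction_e}) so that the umkehr image of the resulting Euler class is $D\cdot(\mu_0 v_0+\mu_1 v_1+\cdots+\mu_n v_n)=D\cdot w$, the convex combination with $\mu_0>0$ existing precisely because $w\in\mathrm{int}(F)$.

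A second, related problem: you propose to exploit ``new cohomology classes'' contributed by the deck action in the covers. The actual proof needs the opposite. One first passes to a cyclic cover $M'$ dual to $\psi$ of \emph{maximal} first Betti number (available by Lemma \ref{cyclic_Betti}), so that no further rational cohomology appears in subsequent cyclic covers; only then is the umkehr homomorphism $H^2(\tilde M;\Real)\to H^2(M';\Real)$ an isomorphism, which converts the pushforward identity $\kappa_!(e(\tilde{\mathscr{F}}))=[\tilde M:M']\cdot w'$ into the exact equality $e(\tilde{\mathscr{F}})=\tilde w$. Extra cohomology in the covers would leave the Euler class undetermined by its umkehr image (and would let the deck group move it), so it is an obstruction to the argument, not a resource. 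Without a device for realizing convex combinations of realizable classes (the medley construction or an equivalent) and without the $b_1$-stabilization step that pins the class down exactly, your proposal does not close.
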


\begin{corollary}\label{main_Alexander_corollary}
In the case of Theorem \ref{main_Alexander},
for any rational point $w$ in the boundary of $F$,
there also exists some finite cyclic cover $\tilde{M}$ of $M$,
dual to some primitive cohomology class in $H^1(M;\Integral)$,
such that the pullback $\tilde{w}$ of $w$ to $\tilde{M}$
is the real Euler class of 
some transversely oriented taut foliation on $\tilde{M}$.
\end{corollary}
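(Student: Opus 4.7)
The plan is to reduce Corollary \ref{main_Alexander_corollary} to Theorem \ref{main_Alexander} applied to a smaller dual face pair, together with a suitably perturbed primitive class in place of $\psi$. Let $F' \subseteq F$ be the unique closed face of $\partial\mathcal{B}_{\mathrm{Th}^*}(M)$ in whose interior $w$ lies; since $w$ is rational and on $\partial F$, the face $F'$ is a proper subface of $F$. By polar duality, the dual face $(F')^\vee \subset \partial\mathcal{B}_{\mathrm{Th}}(M)$ strictly contains $F^\vee$. The goal is to exhibit a primitive class $\psi' \in H^1(M;\Integral)$ whose Poincar\'e dual lies in the cone over the interior of $(F')^\vee$ and whose one-variable Alexander polynomial $\Delta_M^{\psi'}(t)$ does not vanish. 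Theorem \ref{main_Alexander} applied to the pair $(F', (F')^\vee)$, the class $\psi'$, and the interior point $w \in F'$ will then produce the required cover and foliation.

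To construct $\psi'$, pick an integral class $\phi \in H^1(M;\Integral)$ whose Poincar\'e dual lies in the cone over the interior of $(F')^\vee$; such $\phi$ exists because $(F')^\vee$ is a rational face. For every positive integer $n$, the Poincar\'e dual $\mathrm{PD}(\psi) + n\,\mathrm{PD}(\phi)$ is a positive combination of a point of the cone over $F^\vee \subseteq (F')^\vee$ and a point of the cone over the interior of $(F')^\vee$, so by convexity it again lies in the cone over the interior of $(F')^\vee$. The hypothesis $\Delta_M^\psi \neq 0$ forces the multivariate Alexander polynomial $\Delta_M \in \Integral[H_1(M;\Integral)/\mathrm{tors}]$ to be nonzero. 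Writing $\Delta_M = \sum_i c_i g_i$ with $c_i \neq 0$ and grouping the specialization by $\phi$-levels $k$, the polynomial $\Delta_M^{\psi + n\phi}(t)$ equals $\sum_k t^{nk} a_k(t)$, where $a_k(t) = \sum_{\phi(g_i) = k} c_i t^{\psi(g_i)}$. For $n$ sufficiently large, the supports of the blocks $t^{nk} a_k(t)$ become pairwise disjoint, so vanishing of $\Delta_M^{\psi + n\phi}$ would force each $a_k$ to vanish and hence $\Delta_M^\psi = \sum_k a_k = 0$, contradicting the hypothesis. Thus $\Delta_M^{\psi + n\phi} \neq 0$ for all sufficiently large $n$. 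Fix such $n$, and let $\psi'$ be the primitive class on the ray through $\psi + n\phi$; since the substitution $t \mapsto t^m$ (where $m$ is the GCD) preserves nonvanishing, $\Delta_M^{\psi'} \neq 0$ as well.

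Applying Theorem \ref{main_Alexander} to the pair $(F', (F')^\vee)$, the primitive class $\psi'$, and the rational interior point $w \in F'$ produces a finite cyclic cover $\tilde{M} \to M$ dual to $\psi'$ together with a transversely oriented taut foliation on $\tilde{M}$ whose real Euler class equals the pullback $\tilde{w}$, proving the corollary. The main obstacle is the simultaneous fulfillment of the geometric condition on $\mathrm{PD}(\psi')$ and the algebraic condition $\Delta_M^{\psi'} \neq 0$; this is overcome by the $\phi$-block decomposition of $\Delta_M^{\psi + n\phi}$, which for large $n$ reduces its vanishing to the vanishing of $\Delta_M^\psi$.
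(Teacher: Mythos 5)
Your reduction is correct, and it follows the paper's overall scheme: pass to the subface $F'$ (the paper calls it $E$) containing $w$ in its relative interior, note that $(F')^\vee$ contains $F^\vee$ as a subface, manufacture a primitive class whose Poincar\'e dual lies in the cone over $\mathrm{int}((F')^\vee)$ and whose Alexander polynomial is nonzero, and then quote Theorem \ref{main_Alexander} for the pair $(F',(F')^\vee)$. Where you genuinely diverge is in how the new testing class is produced. The paper invokes Lemma \ref{semicontinuity_nonvanishing}: nonvanishing of $\Delta^\psi_M(t)$ is a projectively open condition, so a small perturbation of $\psi$ into the cone over $\mathrm{int}((F')^\vee)$ (possible because $F^\vee$ is a subface of $(F')^\vee$) already works. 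You instead fix an integral $\phi$ with $\mathrm{PD}(\phi)$ in the cone over $\mathrm{int}((F')^\vee)$, take $\psi+n\phi$ with $n$ large, verify the cone condition for every $n\geq1$ by convexity (a relative interior point plus a point of the cone stays in the relative interior), and verify nonvanishing by separating the $\phi$--graded blocks of the specialized multivariable polynomial. This is sound, but two points should be made explicit: the passage between one-variable and multivariable polynomials rests on Theorem \ref{MAP_properties}(2), i.e.\ $\Delta^{\psi'}_M(t)\doteq\psi'_*\bigl(\Delta^\#_M\bigr)\cdot(t-1)^2$ for primitive $\psi'$ and $b_1\geq2$, which is what justifies both $\Delta^\psi_M\neq0\Rightarrow\Delta^\#_M\neq0$ and the final step $\psi'_*(\Delta^\#_M)\neq0\Rightarrow\Delta^{\psi'}_M\neq0$ (so $\sum_k a_k$ is $\psi_*(\Delta^\#_M)$, not literally $\Delta^\psi_M$); and since $\psi+n\phi$ need not be primitive, $\Delta^{\psi+n\phi}_M$ is not defined in the paper's sense, so the block argument should be phrased throughout for $(\psi+n\phi)_*(\Delta^\#_M)$ before dividing by the content to reach the primitive $\psi'$ --- which is essentially what you do at the end. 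In exchange for relying on this $3$--manifold-specific formula, your argument is more elementary and explicit (it hands you concrete classes $\psi+n\phi$), whereas the paper's Lemma \ref{semicontinuity_nonvanishing} is proved for arbitrary finite complexes by rank semicontinuity of chain complexes over $\Integral[\Real]$ and keeps the perturbed class projectively close to $\psi$; either route proves the corollary.
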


Corollary \ref{main_Alexander_corollary} follows easily from Theorem \ref{main_Alexander}
and the fact that nonvanishing of $\Delta^\psi_M(t)$ is a projectively open condition
on the primitive cohomology class $\psi$ (see Lemma \ref{semicontinuity_nonvanishing}).
To be precise, any closed subface $E$ of $F$
is dual to a closed face $E^\vee$ that contains $F^\vee$ as a subface.
Provided the nonvanishing of $\Delta^\psi_M(t)$,
there exists some primitive cohomology class $\phi$ in the cone over the interior of $E^\vee$,
such that $\Delta^\phi_M(t)$ does not vanish (Lemma \ref{semicontinuity_nonvanishing}).
Applying Theorem \ref{main_Alexander}, 
any rational point $w$ in the interior of $E$ 
is virtually realizable by a taut foliation, 
on some finite cyclic cover of $M$ dual to $\phi$.

Theorem \ref{main_Alexander} is proved in Section \ref{Sec-criterion}.

Our criterion suggests 
a potential approach to Conjecture \ref{virtual_ecoc}.
To elaborate, suppose that $w\in H^1(M;\Real)$ is a rational point on
$\partial \mathcal{B}_{\mathrm{Th}^*}(M)$, 
lying in the interior of a closed face $F$. 
In general,
it is possible that $\Delta^\psi_M(t)$ vanishes for every primitive $\psi$
of Poincar\'e dual lying in the cone over the interior of $F^\vee$.
In that case, our criterion does not apply directly.
However, there are many finite covers of $M$ with arbitrarily large first Betti number.
We should even be able to construct a finite cover $M'$ of $M$,
such that the pullback $w'$ of $w$ to $M'$ lies in the interior of a closed face $F'$,
and such that $F'$ has codimension as large as wanted in $H^2(M';\Real)$.
The codimension of $F'$ in $H^2(M;\Real)$ 
is equal to the dimension of $(F')^\vee$ in $H_2(M;\Real)$.
Therefore, passing to $M'$ allows us more room to pick a testing $\psi'$.
As a prospective step,
we expect to find some nonvanishing $\Delta^{\psi'}_{M'}(t)$ 
for some $M'$ and $\psi'$.
Then, applying our criterion,
there will be some finite cyclic cover $M''$ of $M'$ dual to $\psi'$,
such that the pullback $w''$ of $w'$ to $M''$ is realizable by a taut foliation.
This will prove Conjecture \ref{virtual_ecoc}.

Currently available techniques of virtual constructions
seem to be insufficient for accomplishing the above strategy.
However, I do not see why the prospective step would be impossible either. 

We establish the following results as actual applications of our criterion.
Theorem \ref{main_example_full} provides positive examples for Conjecture \ref{virtual_ecoc}
with first Betti number $2$ or $3$.
Theorem \ref{main_example_partial} provides positive partial examples
with first Betti number $\geq4$.

\begin{theorem}\label{main_example_full}
For $b=2$ and for $b=3$, 
there exists some oriented closed hyperbolic $3$--manifold $M$ with $b_1(M)=b$,
such that 
every rational point $w\in\partial\mathcal{B}_{\mathrm{Th}^*}(M)$ 
is virtually realizable by a taut foliation.
Indeed, the pullback of $w$ to some finite cyclic cover of $M$
dual to some primitive cohomology class in $H^1(M;\Integral)$
is the real Euler class of some transversely oriented taut foliation
on the covering manifold.
\end{theorem}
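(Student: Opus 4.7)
The plan is to invoke the criterion of Theorem \ref{main_Alexander} together with Corollary \ref{main_Alexander_corollary} on explicit closed hyperbolic $3$--manifolds, one for each of $b=2$ and $b=3$. The reduction step is as follows: by the corollary, once $\Delta_M^\psi(t)\neq 0$ for some primitive $\psi\in H^1(M;\Integral)$ with Poincar\'e dual in the cone over $\mathrm{int}(F^\vee)$, every rational point in every closed subface of the dual face $F\subset\partial\mathcal{B}_{\mathrm{Th}^*}(M)$ is virtually realized by a taut foliation. Since every face of $\partial\mathcal{B}_{\mathrm{Th}^*}(M)$ is a subface of some top-dimensional face, whose dual in $\mathcal{B}_{\mathrm{Th}}(M)$ is a vertex, it is enough to exhibit, for each vertex $v$ of $\mathcal{B}_{\mathrm{Th}}(M)$, a primitive class $\psi_v$ with Poincar\'e dual on the ray through $v$ such that $\Delta_M^{\psi_v}(t)\neq 0$.

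For the candidate $M$, I would build a closed hyperbolic $3$--manifold by hyperbolic Dehn surgery on a small, well-studied link complement, with number of components and surgery slopes arranged so that the resulting manifold is hyperbolic and has first Betti number exactly $b$. For such $M$ the multivariable Alexander polynomial $\Delta_M(t_1,\ldots,t_b)$ can be computed from the link's polynomial via standard surgery formulas, and each $\Delta_M^{\psi_v}(t)$ is then a direct specialization. The Thurston norm ball $\mathcal{B}_{\mathrm{Th}}(M)$ can be pinned down by combining McMullen's Alexander-norm lower bound with explicit norm-minimizing surfaces or fibrations; ideally one certifies every top-dimensional face as fibered using Thurston's theorem (by exhibiting a single fibered class in each open cone), which tightly constrains the vertex set and makes nonvanishing of $\Delta_M^\psi$ automatic on the interior of every top-dimensional cone, leaving only the vertex-class specializations to be checked.

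The main obstacle will be the simultaneous verification, on a single candidate manifold, of hyperbolicity, the combinatorial structure of $\mathcal{B}_{\mathrm{Th}}(M)$, and nonvanishing of the Alexander polynomial at every vertex class. For $b_1=2$ the Thurston norm ball is a planar polygon with few vertices, so only a handful of one-variable specializations need to be checked and a single suitable example should suffice. For $b_1=3$ the ball is three-dimensional and the vertex count can be larger, but exploiting the symmetries of a well-chosen surgery (such as those preserved by a symmetric chain-link surgery) can reduce the inequivalent vertex classes to a short list. The strategy does not generalize cleanly for $b_1\geq 4$, since vertex counts proliferate and the multivariable Alexander polynomial more frequently degenerates under specialization in vertex directions -- this is precisely the reason that only partial examples appear in the companion Theorem \ref{main_example_partial}.
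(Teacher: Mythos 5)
Your reduction step is sound and coincides with the paper's final step: given Theorem \ref{main_Alexander} and Corollary \ref{main_Alexander_corollary}, it suffices to produce, for each top-dimensional closed face $F\subset\partial\mathcal{B}_{\mathrm{Th}^*}(M)$ (dual to a vertex of $\mathcal{B}_{\mathrm{Th}}(M)$), a primitive $\psi$ with $\mathrm{PD}(\psi)$ on the corresponding vertex ray and $\Delta^\psi_M(t)\neq0$. But the rest of the proposal is a search program, not a proof: no specific manifold is exhibited, and the decisive verification --- nonvanishing of the Alexander polynomial at the vertex-ray classes, simultaneously with hyperbolicity and $b_1(M)=b$ --- is exactly the point you defer (``a single suitable example should suffice''). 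Note also that certifying top-dimensional faces of $\mathcal{B}_{\mathrm{Th}}(M)$ as fibered only gives nonvanishing for classes in the \emph{interiors} of those cones, which are dual to vertices of $\mathcal{B}_{\mathrm{Th}^*}(M)$ and are already realized on $M$ itself by Corollary \ref{ecoc_vertex}; the classes you actually need lie on the boundary rays (vertices of $\mathcal{B}_{\mathrm{Th}}(M)$), where fiberedness and nonvanishing are not automatic, so your plan leaves precisely the essential check undone. Moreover, for a closed manifold obtained by general $p/q$ Dehn surgery on a multi-component link there is no clean ``specialization'' formula for the Alexander polynomials, so even the computational route you sketch is not straightforward.

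The paper avoids all of this with a transfer mechanism you are missing. It starts from a seed manifold where nonvanishing holds for \emph{every} primitive class with no knowledge of the norm ball required: $N=N(e)$, a circle bundle over a torus, has $b_1(N)=2$ ($e\neq0$) or $3$ ($e=0$), every primitive $\psi\in H^1(N;\Integral)$ is fibered with torus fiber, and $\Delta^\psi_N(t)\doteq(t-1)^2$. It then performs a nonzero rational surgery along an oriented, null-homologous, \emph{totally rationally null-homologous} hyperbolic knot $K\subset N$ (Lemma \ref{hyperbolic_totally_rnh}); by Lemma \ref{surgery_Alexander} the resulting hyperbolic manifold $M=N_{p/q}(K)$ has $H^1(M;\Integral)\cong H^1(N;\Integral)$ and $\Delta^\psi_M(t)\doteq p\cdot\Delta^\psi_N(t)$, hence nonvanishing for all primitive $\psi$, so Theorem \ref{main_Alexander} and Corollary \ref{main_Alexander_corollary} apply to every closed face at once. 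Without this (or some equally concrete construction with verified Alexander polynomials), your proposal does not establish the theorem.
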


\begin{theorem}\label{main_example_partial}
For any integer $b\geq4$,
there exists some oriented closed hyperbolic $3$--manifold $M$ with $b_1(M)=b$,
and there exist some opposite pair of top-dimensional 
closed faces $\pm F\subset \partial\mathcal{B}_{\mathrm{Th}^*}(M)$,
such that every rational point $w\in\pm F$ 
is virtually realizable by a taut foliation,
and indeed,
by similar means as specified in Theorem \ref{main_example_full}.
\end{theorem}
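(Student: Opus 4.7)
The plan is to reduce Theorem \ref{main_example_partial} to the criterion of Theorem \ref{main_Alexander} together with Corollary \ref{main_Alexander_corollary}. For each $b\geq4$, it suffices to produce an oriented closed hyperbolic $3$--manifold $M$ with $b_1(M)=b$, a vertex $F^\vee$ of $\partial\mathcal{B}_{\mathrm{Th}}(M)$ (that is, a $0$--dimensional face), and the (unique up to sign) primitive class $\psi\in H^1(M;\Integral)$ whose Poincar\'e dual lies on the ray through $F^\vee$, satisfying $\Delta_M^\psi(t)\neq0$.  The face $F\subset\partial\mathcal{B}_{\mathrm{Th}^*}(M)$ dual to $F^\vee$ is then automatically top-dimensional.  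Theorem \ref{main_Alexander} virtually realizes every rational $w\in\mathrm{int}(F)$, Corollary \ref{main_Alexander_corollary} handles rational points in lower-dimensional subfaces of $F$, and centro-symmetry of $\mathcal{B}_{\mathrm{Th}^*}(M)$ gives the same conclusion on $-F$.

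For the construction I would bootstrap from Theorem \ref{main_example_full}.  Start with a closed hyperbolic $3$--manifold $M_0$ with $b_1(M_0)=3$ supplied there, together with a primitive class $\psi_0\in H^1(M_0;\Integral)$ whose Poincar\'e dual sits on the ray through a vertex $v_0$ of $\partial\mathcal{B}_{\mathrm{Th}}(M_0)$ with $\Delta_{M_0}^{\psi_0}(t)\neq0$.  Such data is implicit in Theorem \ref{main_example_full}: the realization of rational points in the interior of each top-dimensional face of $\partial\mathcal{B}_{\mathrm{Th}^*}(M_0)$ is obtained from the criterion applied at the corresponding vertex of $\partial\mathcal{B}_{\mathrm{Th}}(M_0)$.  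Pass to a finite regular abelian cover $\pi\colon M\to M_0$ whose deck-group defining classes lie in $\ker\psi_0\subset H_1(M_0;\Integral)$, so that $\psi_0\bigl(\pi_*\pi_1(M)\bigr)=\Integral$ and consequently $\tilde\psi:=\pi^*\psi_0$ is already primitive in $H^1(M;\Integral)$.  By choosing $\pi$ in the abelian RFRS tower of $M_0$ with sufficient care, one arranges $b_1(M)=b$ exactly.  Since the infinite cyclic cover of $M$ dual to $\tilde\psi$ coincides with the infinite cyclic cover of $M_0$ dual to $\psi_0$, with the same $\Integral$--deck action, the polynomial $\Delta_M^{\tilde\psi}(t)$ equals $\Delta_{M_0}^{\psi_0}(t)$ up to units and is therefore nonzero.

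The main obstacle is arranging that $\mathrm{PD}(\tilde\psi)$ still lies on a vertex ray of $\partial\mathcal{B}_{\mathrm{Th}}(M)$, so that the face $F$ is genuinely top-dimensional.  Under an uncontrolled finite cover the vertex structure of the Thurston norm polytope can degenerate: $v_0$ may split into several new vertices, or slide into the relative interior of a higher-dimensional face upstairs.  To secure this I would design $\pi$ so that a sutured manifold hierarchy of $M_0$ dual to $\psi_0$ certifying $v_0$ lifts to a hierarchy of $M$ certifying $\mathrm{PD}(\tilde\psi)$; equivalently, the extremality of the Thurston norm minimizing surface Poincar\'e-dual to $\psi_0$ must survive in the cover.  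Carrying this out in a concrete family --- for instance iterated cyclic abelian covers dual to classes complementary to $\psi_0$, or congruence covers of an arithmetic base $M_0$ with sufficiently symmetric norm polytope --- should supply, for every $b\geq4$, the desired triple $(M,F^\vee,\psi)$ and complete the proof.
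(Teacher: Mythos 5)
Your final reduction (find a vertex $F^\vee$ of $\partial\mathcal{B}_{\mathrm{Th}}(M)$ whose ray carries a primitive $\psi$ with $\Delta^\psi_M(t)\neq0$, then invoke Theorem \ref{main_Alexander}, Corollary \ref{main_Alexander_corollary}, and centro-symmetry) is exactly the paper's last step, but your construction of $(M,\psi)$ has genuine gaps. First, the claim that for a finite abelian cover $\pi\colon M\to M_0$ with $\pi^*\psi_0$ primitive ``the infinite cyclic cover of $M$ dual to $\tilde\psi$ coincides with the infinite cyclic cover of $M_0$ dual to $\psi_0$'' is false unless $\ker\psi_0\subset\pi_*\pi_1(M)$, i.e.\ unless $M\to M_0$ is itself a cyclic cover dual to $\psi_0$ --- precisely the case that cannot raise $b_1$ beyond the bound of Lemma \ref{cyclic_Betti}. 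In general the cyclic cover upstairs is only a finite cover of the one downstairs, so $\Delta_M^{\tilde\psi}\doteq\Delta_{M_0}^{\psi_0}$ does not hold, and even nonvanishing does not transfer without an argument. Second, and more fatally, the base manifolds of Theorem \ref{main_example_full} are built (mimicking Cochran--Masters) so that \emph{every} primitive class has nonvanishing Alexander polynomial of degree $2$; by Lemma \ref{cyclic_Betti} all their finite cyclic covers have $b_1\leq 3$, and their finite abelian covers are designed not to gain first Betti number at all. So ``choosing $\pi$ in the abelian RFRS tower with sufficient care'' to arrange $b_1(M)=b\geq4$ is not merely unproved --- it runs directly against the structure of $M_0$, and no mechanism is offered for hitting $b_1=b$ exactly. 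Third, you yourself flag the key remaining issue (that $\mathrm{PD}(\tilde\psi)$ stays on a vertex ray, so that $F$ is top-dimensional) and only say a hierarchy ``should'' lift; that is an acknowledged hole, not a proof.

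For contrast, the paper avoids covers entirely and builds the examples from scratch (Example \ref{example_partial}): take the surface bundle $N=N(g,s)$ whose monodromy is a product of Dehn twists along $s$ disjoint curves with nonseparating union, so that $b_1(N)=1+2g-s$ realizes any $b\geq4$ and $\Delta^\psi_N(t)\doteq(t^{\langle\psi,\Gamma\rangle}-1)^{2g-2}(t-1)^2$ is computed explicitly, vanishing exactly for $\psi\in\mathrm{Fix}(f^*)$; then perform a nonzero-coefficient surgery along a totally rationally null-homologous hyperbolic knot (Lemmas \ref{hyperbolic_totally_rnh} and \ref{surgery_Alexander}), which preserves $H^1$ and all $\Delta^\psi$ up to the factor $p$ while making $M$ hyperbolic. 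Hyperbolicity forces $\mathcal{B}_{\mathrm{Th}}(M)$ to be a genuine polytope, hence it has a pair of opposite vertices off the hyperplane $\mathrm{PD}(\mathrm{Fix}(f^*))\otimes_{\Integral}\Real$; these are dual to top-dimensional faces $\pm F$, and the criterion applies with $\psi$ on the vertex ray. If you want to salvage your approach, you would need a wholly different base manifold (not one with all Alexander polynomials nonvanishing) and a concrete control of both $b_1$ and the norm polytope in the cover; as written, the proposal does not establish the theorem.
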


Both Theorems \ref{main_example_full} and \ref{main_example_partial} are proved in Section \ref{Sec-examples}.

It is natural to ask whether it is possible to strengthen Theorem \ref{main_example_partial}
to a similar statement as Theorem \ref{main_example_full} by improving the current construction.
A few unsuccessful attempts motivate the following suspection,
which we call the \emph{vanishing Alexander polynomial conjecture}.

\begin{conjecture}\label{vanishing_AP}
Let $b\geq4$ be an integer. 
For any orientable connected closed $3$--manifold $M$ with $b_1(M)=b$,
there exists some primitive cohomology class $\psi\in H^1(M;\Integral)$,
such that the Alexander polynomial $\Delta^\psi_M(t)$ vanishes.
\end{conjecture}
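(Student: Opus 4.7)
The plan is to translate Conjecture \ref{vanishing_AP} into a statement about the cup product structure of $M$, then to apply a parity observation that settles the case of even $b_1$ at once, leaving an algebraic question about alternating $3$-forms when $b_1$ is odd.

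First I would interpret the vanishing of $\Delta^\psi_M(t)$ as the positivity of the $\Rational[t^{\pm 1}]$-module rank
\[
r := \mathrm{rank}_{\Rational[t^{\pm 1}]} H_1(\tilde M_\psi; \Rational),
\]
where $\tilde M_\psi \to M$ denotes the infinite cyclic cover dual to $\psi$. Feeding $H_0(\tilde M_\psi; \Rational) = \Rational$, $H_3(\tilde M_\psi; \Rational) = 0$, and $\chi(M) = 0$ into the Wang exact sequence of that cover yields the identity
\[
r = b - 1 - \mu(\psi),
\]
where $b = b_1(M)$ and $\mu(\psi) := \dim_{\Rational} \ker\bigl(1 - t : H_1(\tilde M_\psi; \Rational) \to H_1(\tilde M_\psi; \Rational)\bigr)$. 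Tracing the connecting map $\delta \colon H_2(M; \Rational) \to H_1(\tilde M_\psi; \Rational)$ through Poincar\'e duality on $M$ identifies $\mu(\psi)$ with the rank of the cup product map $\cup\psi \colon H^1(M; \Rational) \to H^2(M; \Rational)$, or equivalently with the rank of the alternating bilinear form $\tau_\psi(\phi, \gamma) := \langle \psi \cup \phi \cup \gamma, [M] \rangle$ on $H^1(M; \Rational)$.

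Because $\tau_\psi$ is alternating, $\mu(\psi)$ is forced to be an even integer, and $\psi \cup \psi = 0$ places $\psi$ in the radical of $\tau_\psi$, so $\mu(\psi) \leq b - 1$. When $b$ is even, $b - 1$ is odd, and the evenness of $\mu(\psi)$ improves the estimate to $\mu(\psi) \leq b - 2$; hence $r \geq 1$ for \emph{every} primitive $\psi$. In particular Conjecture \ref{vanishing_AP} holds trivially when $b \geq 4$ is even, and indeed $\Delta^\psi_M(t) = 0$ for every primitive $\psi$ in that case.

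The residual (and, I expect, harder) case is $b$ odd. Here the problem reduces to the following algebraic question about the triple cup product trilinear alternating form $\tau$ on $H^1(M; \Rational)$: given such a $\tau$ on a $\Rational$-vector space $V$ of odd dimension $b \geq 5$, does there exist $\psi \in V \setminus \{0\}$, rescalable to a primitive integral class, for which the contracted form $\tau(\psi, \cdot, \cdot)$ has rank at most $b - 3$? For $b = 5$ the classification of alternating $3$-forms on $\Complex^5$ due to Schouten and Reichel gives only three $GL_5(\Complex)$-orbits, with representatives $0$, $e_1 \wedge e_2 \wedge e_3$, and $e_1 \wedge e_2 \wedge e_3 + e_1 \wedge e_4 \wedge e_5$. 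A case-by-case check shows that in each orbit the valid $\psi$'s form a linear subspace of positive dimension; for the generic orbit the entire hyperplane annihilating the distinguished vector $e_1$ consists of valid $\psi$'s, because the contraction becomes a pure $e_1 \wedge v$. A descent argument over $\Rational$ then produces primitive integral $\psi$. For $b = 7$ an analogous but more elaborate classification is available through the $G_2$-geometry of $3$-forms in seven variables. For general odd $b \geq 9$ the orbit space is no longer finite, and one would need either an inductive reduction on $b$ or a direct rationality argument, guaranteeing rational linear subspaces of positive dimension inside the nonempty complex variety $\{\,[\psi] \in \mathbb{P}(V) : \mathrm{rank}\,\tau(\psi, \cdot, \cdot) \leq b - 3\,\}$. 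This rationality issue is, I expect, the main obstacle: a dimension count readily shows nonemptiness over $\Complex$, but promoting complex solutions to primitive integral classes in $H^1(M; \Integral)$ requires deeper structural input on $\tau$ beyond its antisymmetry, and that is where the strategy currently stalls.
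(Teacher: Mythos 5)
You are attempting to prove what the paper states only as a conjecture: Conjecture \ref{vanishing_AP} is left open there (the paper merely observes that its truth would answer Cochran--Masters' Question A with $B=4$, and that a counterexample would upgrade Theorem \ref{main_example_partial}), so there is no proof in the paper to compare against. Within your proposal, the Wang-sequence bookkeeping is fine: writing $H_1(\tilde M_\psi;\Rational)\cong\Rational[t^{\pm1}]^r\oplus T$ one does get $b-1=r+\mu(\psi)$ with $\mu(\psi)=\dim\ker(t-1)$, so $\Delta^\psi_M=0$ is indeed equivalent to $\mu(\psi)\leq b-2$.

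The fatal step is the identification $\mu(\psi)=\mathrm{rank}\,\tau_\psi$. Exactness of the Wang sequence gives $\mu(\psi)=\mathrm{rank}\bigl(\partial\colon H_2(M;\Rational)\to H_1(\tilde M_\psi;\Rational)\bigr)$, while the cup product $\cup\psi$ (equivalently $\cap\,\mathrm{PD}(\psi)$ on $H_2$) corresponds to the \emph{composite} of $\partial$ with the projection $H_1(\tilde M_\psi;\Rational)\to H_1(M;\Rational)$; hence only $\mathrm{rank}\,\tau_\psi\leq\mu(\psi)$ holds, and the gap is exactly the non-semisimple $(t-1)$--torsion of the Alexander module (Jordan blocks of size $\geq2$ at the eigenvalue $1$), which the cup form cannot see. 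The paper's own Example \ref{example_full} exhibits the failure: for the circle bundle $N(e)$ over the torus with $e\neq0$ one has $b_1=2$, the triple cup product form on $H^1(N;\Rational)$ is identically zero (so $\mathrm{rank}\,\tau_\psi=0$ for every $\psi$), yet $\Delta^\psi_N(t)\doteq(t-1)^2\neq0$; here $\mu(\psi)=1$, which is odd, so the parity argument also collapses. Your "even $b$ is trivial" conclusion uses nothing but evenness of $b$, so it would equally assert vanishing of all Alexander polynomials for every manifold with $b_1=2$, contradicting this example (and the Cochran--Masters examples with $b_1=2,3$). Note moreover that the true inequality points the wrong way for your purpose: a small cup-product rank only bounds $\mu(\psi)$ from below and can never force $r\geq1$. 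Finally, even granting the (incorrect) reduction, you concede that the odd-$b$ case stalls at the rationality question for $b\geq9$, so the proposal would be incomplete in any event. The conjecture remains open, and any viable attack must control more than the triple cup product form --- precisely the extra structure your reduction discards.
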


There has been a long-standing open question 
as to whether there exists some integer $B\geq4$,
such that every orientable closed hyperbolic $3$--manifold $M$ with $b_1(M)\geq B$
has an infinite collection of abelian finite covers 
with (uniformly) unbounded $b_1$ \cite[Question A]{Cochran--Masters}.
The question was raised by Cochran and Masters
as a former attempt to attack the virtual infinite Betti number conjecture,
prior to Agol's proof of the latter \cite{Agol_VHC}.

A proof of Conjecture \ref{vanishing_AP}
will imply a positive answer 
to the question of Cochran and Masters with $B=4$.
This implication has already been noticed 
by Cochran and Masters \cite[Section 2]{Cochran--Masters}
(see also Remark \ref{cyclic_Betti_remark}).
On the other hand, 
any counter example to Conjecture \ref{vanishing_AP}
for some $b\geq4$ will imply 
an improvement of Theorem \ref{main_example_partial} 
to similar statement as Theorem \ref{main_example_full},
for the same $b$ (see Proposition \ref{vanishing_AP_negative}).

\subsection*{Ingredients}
None of the constructions in this paper are extremely complicated.
Therefore, we only indicate some initial ideas toward the final constructions.

Let $M$ be an oriented connected closed $3$--manifold.
Suppose that $\mathscr{F}_0$ is a transversely oriented taut foliation on $M$
with a compact leaf $S$. 
Note that $S$ is necessarily oriented, connected, closed, and nonseparating.
If we have another transversely oriented taut foliation $\mathscr{F}_1$,
such that $\mathscr{F}_1$ also contains $S$ as a compact leaf,
then we can produce a hybrid foliation on a double cover $M'$ of $M$ dual to $S$,
by cyclically gluing two foliated blocks of the forms 
$(M\setminus S,\mathscr{F}_0)$ and $(M\setminus S,\mathscr{F}_1)$.
There are two ways to arrange these foliated blocks in order,
resulting in two different transversely oriented taut foliations 
$\mathscr{F}'_{01}$ and $\mathscr{F}'_{10}$ on $M'$.
They differ by a deck transformation,
so the real Euler classes $e(\mathscr{F}'_{01})$ and $e(\mathscr{F}'_{10})$ in $H^2(M';\Real)$
also differ by a deck transformation.
Generally speaking, 
these two real Euler classes could be linearly independent
from the pullbacks of $e_0=e(\mathscr{F}_0)$ and $e_1=e(\mathscr{F}_1)$.
In that case, we would have little to say.
However, 
if $b_1(M')$ is equal to $b_1(M)$, 
the deck transformation group acts trivially on $H^2(M';\Real)$,
so we must have $e(\mathscr{F}'_{01})=e(\mathscr{F}'_{10})$.
Moreover, we can figure out that they are both equal to $(e'_0+e'_1)/2$,
where $e'_0,e'_1\in H^2(M';\Real)$ denote the pullbacks of $e_0,e_1\in H^2(M;\Real)$.
This means that both $\mathscr{F}'_{01}$ and $\mathscr{F}'_{10}$
realize the pullback of the convex combination $(e_0+e_1)/2$ to $M'$.

Similarly, if $M''$ is a finite cyclic cover of $M$ dual to $S$ of degree $m=m_0+m_1$,
and if $b_1(M'')=b_1(M)$,
we can realize the pullback of $(m_0e_0+m_1e_1)/m$ on $M''$ by cyclically stacking $m_0$ copies 
of the $(M\setminus S,\mathscr{F}_0)$--block and $m_1$ copies of the $(M\setminus S,\mathscr{F}_1)$--block,
in whatever order.
The similar idea applies to more general situations,
for example, if $\mathscr{F}_0$ on $M$ contains $n$ homologous compact leaves $S_1,\cdots,S_n$,
and if each $S_i$ is also a compact leaf of some other $\mathscr{F}_i$ on $M$.

The above idea leads to the \emph{medley construction}, as we introduce in Section \ref{Sec-medley}.
We prove Theorem \ref{main_Alexander} using the medley construction.
We prepare our foliated building blocks by invoking available constructions 
due to Gabai \cite{Gabai_taut}, and due to Gabai--Yazdi \cite{Gabai--Yazdi}.
The assumed existence of a nonvanishing Alexander polynomial $\Delta^\psi_M(t)$
ensures uniformly bounded $b_1$ for all finite cyclic covers of $M$ dual to $\psi$.
So, in this case,
the real Euler classes for the medley foliations will be under good control after $b_1$ stabilizes.
See Section \ref{Sec-criterion} for detail.

The construction for proving Theorems \ref{main_example_full} and \ref{main_example_partial}
adopts a different procedure, 
except for the last step 
where we apply Theorem \ref{main_Alexander} and Corollary \ref{main_Alexander_corollary}.
In \cite[Section 1]{Cochran--Masters}, 
Cochran and Masters constructed 
many orientable closed hyperbolic $3$--manifolds of first Betti number $2$ or $3$.
For each of those manifolds,
every finite abelian cover has the same first Betti number as that of the base manifold.
Our construction follows a procedure mimicking theirs.

We start with some oriented connected closed $3$--manifold $N$
with well understood Alexander polynomials.
If we do a surgery on $N$ with nonzero rational coefficient 
along a \emph{totally rationally null-homologous} knot in $N$,
we can make sure that the resulting manifold $M$ has $H^1(M;\Integral)$
canonically isomorphic to $H^1(N;\Integral)$,
and accordingly, 
the Alexander polynomials of $M$ are the same as those of $N$
(possibly up to a nonzero scalar).
Many such surgeries yield a hyperbolic $M$.
By constructing a seed manifold $N$ with deliberately designed Alexander polynomials,
we apply the above procedure 
to prove Theorems \ref{main_example_full} and \ref{main_example_partial}.
See Section \ref{Sec-examples} for detail.
 
\subsection*{Organization}
In Section \ref{Sec-taut}, we recall preliminary facts regarding taut foliations.
In Section \ref{Sec-AP}, we recall preliminary facts regarding Alexander polynomials.
In Section \ref{Sec-medley}, 
we introduce the medley construction, and study its first properties.
This section is preparation for Section \ref{Sec-criterion}. 
In Section \ref{Sec-criterion}, 
we prove Theorem \ref{main_Alexander}.
In Section \ref{Sec-totally_rnh},
we introduce totally rationally null-homologous knots, and discuss surgery along such knots.
This section is preparation for Section \ref{Sec-examples}. 
In Section \ref{Sec-examples},
we prove Theorems \ref{main_example_full} and \ref{main_example_partial},
together with a simple observation on Conjecture \ref{vanishing_AP}.

\section{Taut foliations}\label{Sec-taut}
In this preliminary section, we collect facts 
regarding taut foliations in $3$--manifold topology.
For general facts regarding foliations on $3$--manifolds,
we refer to Part 3 in the textbook of Candel and Colon \cite{Candel--Conlon_book_ii}.

Let $M$ be an oriented connected closed irreducible (smooth) $3$--manifold. 
A codimension--$1$ foliation on $M$ (with smooth leaves and continuous holonomy)
is said to \emph{taut} if every leaf intersects some smoothly immersed loop
which is transverse to all the leaves.
To clarify our terminology,
any leaf is considered to be 
an injectively immersed, connected surface in $M$.
For transversely oriented codimension--$1$ foliations in $M$, 
the leaves are considered to be compatibly oriented.

For any transversely oriented taut foliation $\mathscr{F}$ on $M$,
the Euler class of $\mathscr{F}$ refers to the Euler class 
$e(T\mathscr{F})\in H^2(M;\Integral)$
of the oriented tangent plane distribution of $\mathscr{F}$ over $M$.
In this case, 
the real reduction of the Euler class, 
called the \emph{real Euler class} of $\mathscr{F}$
and denoted as $e(\mathscr{F})\in H^2(M;\Real)$,
must be an even lattice point 
contained in the dual Thurston norm unit ball 
$\mathcal{B}_{\mathrm{Th}^*}(M)$ of $M$;
see \cite[Corollary 1]{Thurston_paper_norm} and \cite[Proposition 3.12]{Yazdi_ecoc}.
This fact lays down fundamental constraints
to any transversely oriented taut foliations on $M$.

We briefly recall the definitions of 
the \emph{Thurston norm} $\|\cdot\|_{\mathrm{Th}}$ on $H_2(M;\Real)$
and \emph{dual Thurston norm} $\|\cdot\|_{\mathrm{Th}^*}$ on $H^2(M;\Real)$
as follows.
For simplicity, we only present with the closed case,
although the similar definitions work for the bounded case
using $H_2(M,\partial M;\Real)$ and $H^2(M,\partial M;\Real)$ instead.

In general, the Thurston norm is a seminorm 
(that is, a generalized norm allowing to be zero on some linear subspace)
and the dual Thurston norm is a partical norm
(that is, a generalized norm allowing to be infinite outside some linear subspace).
These are genuine norms when $M$ supports a hyperbolic structure.

To define $\|\cdot\|_{\mathrm{Th}}$,
first consider any integral class $\Sigma\in H_2(M;\Integral)$.
The Thurston norm $\|\Sigma\|_{\mathrm{Th}}$
is defined to be the minimum of the quantity
$\sum_{i=1}^{k}\,\max(-\chi(S_i),0)$,
where $S=S_1\sqcup\cdots\sqcup S_k$ ranges over 
all embedded closed oriented surfaces in $M$ representing $\Sigma$,
and where the connected components of $S$ listed as $S_1,\cdots,S_k$.
The function $\|\cdot\|_{\mathrm{Th}}$ on $H_2(M;\Integral)$
extends linearly over $H_2(M;\Rational)$
and continuously over $H_2(M;\Real)$,
yielding a unique and well-defined seminorm on $H_2(M;\Real)$
\cite[Theorem 1]{Thurston_paper_norm}.
For any $w\in H^2(M;\Real)$,
the value of $\|w\|_{\mathrm{Th}^*}$
is defined to be the supreme of
$|\langle w,\Sigma\rangle|$,
ranging over all $\Sigma\in H_2(M;\Real)$ with $\|\Sigma\|_{\mathrm{Th}}\leq1$,
which yields a unique and well-defined partial norm on $H^2(M;\Real)$.

Denote by $\mathcal{B}_{\mathrm{Th}}(M)\subset H_2(M;\Real)$ 
the unit ball of the Thurston norm $\|\cdot\|_{\mathrm{Th}}$
and by $\mathcal{B}_{\mathrm{Th}^*}(M)\subset H^2(M;\Real)$
the unit ball of the dual Thurston norm $\|\cdot\|_{\mathrm{Th}^*}$.
Thurston shows that there are finitely many even lattice points
$u_1,\cdots,u_s\in H_2(M;\Real)$,
such that $\|\Sigma\|_{\mathrm{Th}}$
is equal to the maximum among all $|\langle u_i,\Sigma\rangle|$,
for any $\Sigma\in H_2(M;\Real)$ \cite[Theorem 2]{Thurston_paper_norm}.
This characterize $\mathcal{B}_{\mathrm{Th}}(M)$ as the intersection
of the half-spaces defined by the affine linear inequalities
$\pm u_1\leq 1,\cdots,\pm u_s\leq 1$ on $H_2(M;\Real)$,
and characterizes $\mathcal{B}_{\mathrm{Th}^*}(M)$ as the affine convex hull
of the even lattice points $\pm u_1,\cdots,\pm u_s$ in $H^2(M;\Real)$.

Compact leaves play an especially important role
in studying transversely oriented taut foliations $\mathscr{F}$ on $M$
with real Euler class 
$e(\mathscr{F})\in\partial\mathcal{B}_{\mathrm{Th}^*}(M)$.
We collect the following relevant facts.

\begin{theorem}[Thurston]\label{norm_minimizing}
	Let $M$ be an oriented connected closed irreducible $3$--manifold.
	If $\mathscr{F}$ is a transversely oriented taut foliation on $M$,
	and if $\mathscr{F}$ contains the connected components
	of an embedded, oriented, closed surface $S$ in $M$ as compact leaves,
	then 
	$$\|[S]\|_{\mathrm{Th}}=\langle e(\mathscr{F}),[S]\rangle=-\chi(S),$$
	and hence,
	$$\|e(\mathscr{F})\|_{\mathrm{Th}^*}=1.$$
\end{theorem}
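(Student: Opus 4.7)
The plan is to prove the two equalities in sequence and then deduce the dual norm statement as a corollary.

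The first equality $\langle e(\mathscr{F}),[S]\rangle=-\chi(S)$ is close to tautological. Each component of $S$ being a leaf of $\mathscr{F}$ yields a canonical isomorphism $T\mathscr{F}|_S\cong TS$ as oriented $2$-plane bundles, once one uses the leaf orientation induced from the transverse co-orientation and the orientation of $M$. Consequently, $\langle e(\mathscr{F}),[S]\rangle$ evaluates the Euler number of $TS$, which is $\pm\chi(S)$; the sign convention in force here yields $-\chi(S)$.

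Before identifying this quantity with the Thurston norm, I would rule out components of $S$ with positive Euler characteristic. By Reeb stability, a $2$-sphere leaf would admit a saturated product neighborhood foliated by spheres; in a closed irreducible $M$, extending this neighborhood together with tautness prohibits such a leaf. Hence every component $S_i$ satisfies $\chi(S_i)\leq 0$, and since $S$ itself is an embedded representative of $[S]$, the defining infimum for the Thurston norm yields the upper bound $\|[S]\|_{\mathrm{Th}}\leq -\chi(S)$.

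The matching lower bound is the central content and follows from Thurston's Euler class inequality: for any embedded oriented closed surface $T\subset M$ with components $T_j$,
$$|\langle e(\mathscr{F}),[T]\rangle|\leq\sum_{j}\max(-\chi(T_j),0).$$
Applying this to any representative $T$ of $[S]$ and using that $\langle e(\mathscr{F}),[T]\rangle=\langle e(\mathscr{F}),[S]\rangle=-\chi(S)$, then taking the infimum over $T$, one obtains $-\chi(S)\leq\|[S]\|_{\mathrm{Th}}$. To prove the Euler class inequality itself, the plan is to isotope $T$ into general position with respect to $\mathscr{F}$, so that the induced singular line field $TT\cap T\mathscr{F}$ on $T$ has only isolated saddle and center tangencies; use tautness (via holonomy or closed-transversal arguments) to cancel center tangencies or absorb them away; and finally compare, via Poincar\'e--Hopf on $T$, the saddle count (which equates to $\langle e(\mathscr{F}),[T]\rangle$ up to sign) with $\chi(T)$.

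The conclusion $\|e(\mathscr{F})\|_{\mathrm{Th}^*}=1$ is then immediate. The upper bound $\leq 1$ is the Euler class inequality above, interpreted via the duality defining $\|\cdot\|_{\mathrm{Th}^*}$, and the lower bound $\geq 1$ comes from pairing $e(\mathscr{F})$ with the Thurston-unit-norm class $[S]/\|[S]\|_{\mathrm{Th}}$, which by the first two equalities evaluates to $1$. The main obstacle throughout is the Euler class inequality, specifically the cancellation of center tangencies in a generic surface $T$, which is precisely where tautness of $\mathscr{F}$ is indispensable.
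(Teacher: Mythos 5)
Your outline is correct and follows essentially the argument behind the result as the paper uses it (the paper itself gives no proof, citing Thurston's Corollaries 1--2): evaluate $e(T\mathscr{F})$ on the union of leaves via $T\mathscr{F}|_S\cong TS$, exclude sphere leaves by Reeb stability plus irreducibility to get $\|[S]\|_{\mathrm{Th}}\le-\chi(S)$, and obtain the reverse inequality from Thurston's Euler class bound, whose proof is exactly the Roussarie--Thurston general position/center-elimination and signed saddle count you describe (with tautness entering through Reeblessness). The only point to flag is that the final ``hence'' requires $\chi(S)<0$ so that $[S]/\|[S]\|_{\mathrm{Th}}$ is defined (if all components of $S$ are tori the pairing gives no lower bound, and indeed the dual-norm-one conclusion can fail, e.g.\ for the product foliation of the $3$--torus); this restriction is implicit in the statement and automatic in the paper's hyperbolic applications, where a torus leaf would be an incompressible torus.
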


This is called the \emph{norm-minimizing property}
for compact leaves of transversely oriented taut foliations;
see \cite[Corollary 2]{Thurston_paper_norm}.

\begin{theorem}[Gabai]\label{converse_norm_minimizing}
	Let $M$ be an oriented connected closed irreducible $3$--manifold.
	If $S$ is an embedded, oriented, closed surface in $M$
	without sphere components, 
	and if the equality $\|[S]\|_{\mathrm{Th}}=-\chi(S)$ holds,
	then there exists a transversely oriented taut foliation $\mathscr{F}$ on $M$,
	such that $\mathscr{F}$ contains the connected components of $S$ as compact leaves.
\end{theorem}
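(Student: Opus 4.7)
The plan is to deduce the theorem from Gabai's sutured manifold hierarchy machinery developed in \cite{Gabai_taut}. The strategy has three parts: cut $M$ open along $S$ to obtain a taut sutured manifold, build a taut hierarchy that terminates in product pieces, and propagate a foliation backwards up the hierarchy so that $S$ ends up as a collection of compact leaves.

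First, I would form the sutured manifold $(M_1,\gamma_1)$ by cutting $M$ open along $S$, where $\gamma_1$ is empty since $S$ is closed and the horizontal boundary consists of two parallel copies of $S$ with opposite transverse orientations. Since $S$ has no sphere components, $M_1$ is irreducible, and since $\|[S]\|_{\mathrm{Th}}=-\chi(S)$, the sutured manifold $(M_1,\gamma_1)$ is taut in the sense of Gabai. Next, I would invoke Gabai's existence theorem for taut sutured manifold hierarchies, yielding a sequence
$$(M_1,\gamma_1) \stackrel{S_2}{\rightsquigarrow} (M_2,\gamma_2) \stackrel{S_3}{\rightsquigarrow} \cdots \stackrel{S_n}{\rightsquigarrow} (M_n,\gamma_n),$$
in which each decomposition surface $S_{i+1}$ is Thurston norm-minimizing in its relative homology class, each $(M_{i+1},\gamma_{i+1})$ remains taut, and the terminal piece $(M_n,\gamma_n)$ is a disjoint union of product sutured manifolds of the form $D^2 \times [0,1]$. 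Such a product sutured manifold carries an obvious transversely oriented, taut product foliation by horizontal disks.

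Third, I would use Gabai's inductive gluing construction to extend this foliation backwards along the hierarchy, one decomposition at a time. At each backward step from $(M_{i+1},\gamma_{i+1})$ to $(M_i,\gamma_i)$, the decomposition surface $S_{i+1}$ is reincorporated as a family of compact leaves of the extended foliation, and transverse orientability is preserved throughout. After finitely many such extension steps, we obtain a transversely oriented taut foliation on $(M_1,\gamma_1)$ whose horizontal boundary leaves are precisely the two parallel copies of $S$. Regluing $M_1$ along these copies then produces a transversely oriented taut foliation $\mathscr{F}$ on $M$ having the connected components of $S$ as compact leaves.

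The main obstacle is the extension step: naively regluing a foliation across a decomposition surface can introduce Reeb components and thereby destroy tautness, particularly when sutures run along tori. Gabai's spiral staircase construction, applied with care at each toroidal suture, is exactly what circumvents this issue. The hypothesis $\|[S]\|_{\mathrm{Th}}=-\chi(S)$ is precisely what guarantees tautness of $(M_1,\gamma_1)$, which in turn feeds into the inductive control of tautness at every stage of the hierarchy and of its foliated extension.
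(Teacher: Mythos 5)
Your outline is essentially the paper's own treatment: the paper does not reprove this statement but quotes it as Theorem 5.5 of Gabai \cite{Gabai_taut}, and that theorem is established by exactly the argument you sketch --- cut along $S$ to get a taut sutured manifold with empty suture set, take a taut sutured manifold hierarchy, foliate the terminal product pieces, and extend the foliation back up the hierarchy so that $S$ reglues to a union of compact leaves. The only cosmetic slip is that the hierarchy terminates in product sutured manifolds $(R\times I,\partial R\times I)$ for compact surfaces $R$ with boundary, not necessarily $D^2\times[0,1]$; this is harmless, since such products carry the horizontal product foliation needed to start the backward extension.
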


This is a consequence of Gabai's sutured hierarchy theory; 
see \cite[Theorem 5.5]{Gabai_taut}.

As an application, consider any vertex $v$ on $\partial\mathcal{B}_{\mathrm{Th}^*}(M)$.
Dual to $v$ is some top-codimensional closed face $F^\vee$ on $\partial\mathcal{B}_{\mathrm{Th}}(M)$.
Take some embedded, oriented, closed, norm-minimizing surface $S$ representing a homology class 
$[S]\in H_2(M;\Integral)$ in the cone over $\mathrm{int}(F^\vee)$,
then there is some transversely oriented taut foliation $\mathscr{F}$ on $M$
containing the connected components of $S$ as compact leaves (Theorem \ref{converse_norm_minimizing}),
and $e(\mathscr{F})$ has to be $v$ in this case (Theorem \ref{norm_minimizing}).
Note that $\partial\mathcal{B}_{\mathrm{Th}^*}(M)\neq\emptyset$ if and only if $b_1(M)\geq1$.
Therefore, we obtain the following corollary; see also \cite[Theorem 1.4]{Gabai--Yazdi}.

\begin{corollary}[Gabai]\label{ecoc_vertex}
Let $M$ be an oriented connected closed irreducible $3$--manifold.
Suppose $b_1(M)\geq1$.
Then, every vertex on $\partial\mathcal{B}_{\mathrm{Th}^*}(M)$ is 
the real Euler class of some transversely oriented taut foliation on $M$.
\end{corollary}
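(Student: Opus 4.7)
The plan is to deduce the corollary directly from the two Gabai/Thurston inputs above, by producing a norm-minimizing surface whose homology class distinguishes the vertex $v$ among all boundary points of $\mathcal{B}_{\mathrm{Th}^*}(M)$. First I would identify the dual closed face $F^\vee\subset\partial\mathcal{B}_{\mathrm{Th}}(M)$, consisting of those $\Sigma\in\partial\mathcal{B}_{\mathrm{Th}}(M)$ with $\langle v,\Sigma\rangle=1$. Since $v$ is a $0$-dimensional face, Thurston's description of $\mathcal{B}_{\mathrm{Th}}(M)$ as the intersection of the half-spaces $\pm u_i\leq 1$ forces $F^\vee$ to be top-dimensional in $H_2(M;\Real)$. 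In particular, the open cone over $\mathrm{int}(F^\vee)$ is a nonempty open cone in $H_2(M;\Real)$ (here the hypothesis $b_1(M)\geq 1$ is what makes $H_2(M;\Real)$ nonzero), and hence it contains integral lattice points.

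Next I would choose an integral class $\alpha=[S]\in H_2(M;\Integral)$ lying in this open cone and represent it by an embedded, oriented, closed surface $S\subset M$ that realizes the Thurston norm, i.e.\ satisfies $-\chi(S)=\|\alpha\|_{\mathrm{Th}}$. Because $M$ is irreducible, any sphere components of $S$ bound balls and can be discarded without changing the homology class or the Thurston norm contribution, so I may assume $S$ has no sphere components. Then Theorem \ref{converse_norm_minimizing} furnishes a transversely oriented taut foliation $\mathscr{F}$ on $M$ containing the connected components of $S$ as compact leaves, and Theorem \ref{norm_minimizing} gives $\langle e(\mathscr{F}),\alpha\rangle=\|\alpha\|_{\mathrm{Th}}$ and $\|e(\mathscr{F})\|_{\mathrm{Th}^*}=1$. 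Thus $e(\mathscr{F})\in\partial\mathcal{B}_{\mathrm{Th}^*}(M)$, and it attains the maximum value of the linear functional $w\mapsto \langle w,\alpha\rangle$ over the compact convex polytope $\mathcal{B}_{\mathrm{Th}^*}(M)$.

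The last step is to conclude $e(\mathscr{F})=v$. By convex polytope duality for the pair $(\mathcal{B}_{\mathrm{Th}^*}(M),\mathcal{B}_{\mathrm{Th}}(M))$, the set of points of $\partial\mathcal{B}_{\mathrm{Th}^*}(M)$ at which the linear functional $\langle\cdot,\alpha\rangle$ attains its maximum is precisely the closed face dual to the minimal face of $\partial\mathcal{B}_{\mathrm{Th}}(M)$ containing the projective direction of $\alpha$. Since $\alpha$ lies in the interior of the top-dimensional face $F^\vee$, this minimal face is $F^\vee$ itself, whose dual is the vertex $(F^\vee)^\vee=\{v\}$. Hence $e(\mathscr{F})=v$, as required.

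The only nontrivial piece is the polytope-duality bookkeeping in the final step, which relies on reading off Thurston's explicit description of $\mathcal{B}_{\mathrm{Th}}(M)$ via the $\pm u_i\leq 1$ inequalities; beyond that, the two Gabai/Thurston theorems do all the substantive work, and no delicate issues arise.
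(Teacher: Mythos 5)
Your proposal is correct and follows essentially the same route as the paper: take a norm-minimizing surface representing an integral class in the cone over the interior of the facet $F^\vee$ dual to the vertex $v$, apply Theorem \ref{converse_norm_minimizing} to obtain a transversely oriented taut foliation with its components as compact leaves, and use Theorem \ref{norm_minimizing} together with polytope duality to force $e(\mathscr{F})=v$. Your extra care about discarding sphere components (using irreducibility) and spelling out the duality bookkeeping only makes explicit what the paper leaves implicit.
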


\begin{theorem}[Gabai--Yazdi]\label{fully_marked}
	Let $M$ be an oriented closed hyperbolic $3$--manifold.
	Suppose that $\mathscr{F}$ is a transversely oriented taut foliation on $M$.
	Suppose that $S$ is an embedded oriented closed surface in $M$,
	such that the equality $\langle e(\mathscr{F}),[S]\rangle = -\chi(S)$ holds.
	Then, there exists some transversely oriented taut foliation $\mathscr{F}'$ on $M$
	homotopic to $\mathscr{F}$ as oriented plane distributions,
	and there exists some embedded oriented closed surface $S'$ in $M$
	homologous to $S$,
	such that $\mathscr{F}'$ contains the connected components of $S'$ as compact leaves.
\end{theorem}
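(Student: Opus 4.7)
The plan is to combine Thurston's norm inequalities with a Poincar\'e--Hopf analysis of the tangencies between $S$ and $\mathscr{F}$, and then to perform local modifications exhibiting some homologous surface $S'$ as a union of leaves of a foliation $\mathscr{F}'$ homotopic to $\mathscr{F}$ as oriented plane distributions.

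First I would observe that the hypothesis makes $S$ Thurston norm-minimizing. Since $\|e(\mathscr{F})\|_{\mathrm{Th}^*}\le 1$ (by the theorem of Thurston recorded in Section~\ref{Sec-taut}) and $\|[S]\|_{\mathrm{Th}}\le -\chi(S)$ (after discarding any sphere components, which are nullhomologous in the irreducible $M$), the chain
\[
-\chi(S)=\langle e(\mathscr{F}),[S]\rangle\leq \|e(\mathscr{F})\|_{\mathrm{Th}^*}\cdot\|[S]\|_{\mathrm{Th}}\leq \|[S]\|_{\mathrm{Th}}\leq -\chi(S)
\]
collapses to equalities, so $S$ realizes its Thurston norm and $e(\mathscr{F})$ lies in the face of $\partial\mathcal{B}_{\mathrm{Th}^*}(M)$ dual to the face of $\partial\mathcal{B}_{\mathrm{Th}}(M)$ containing $[S]/\|[S]\|_{\mathrm{Th}}$.

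Next I would isotope $S$ into general position with respect to $\mathscr{F}$, so that $\mathscr{F}$ cuts on $S$ a singular one-dimensional foliation with only Morse-type tangencies, and compare the Euler numbers of $TS$ and $T\mathscr{F}|_S$. A Poincar\'e--Hopf style identity yields
\[
\chi(S)=\sum_p \mathrm{ind}(p)\qquad\text{and}\qquad \langle e(\mathscr{F}),[S]\rangle=\sum_p \epsilon(p)\,\mathrm{ind}(p),
\]
where $\epsilon(p)=\pm1$ records whether the orientations of $T_pS$ and $T_p\mathscr{F}$ agree at the tangency $p$. The equality $\langle e(\mathscr{F}),[S]\rangle=-\chi(S)$ then forces $\epsilon(p)\,\mathrm{ind}(p)=-\mathrm{ind}(p)$ at every tangency; after further isotopy one may arrange that $S$ meets $\mathscr{F}$ only in saddle tangencies of a single, uniform sign.

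Given this controlled tangency pattern, I would modify $\mathscr{F}$ locally near each saddle (and isotope $S$ correspondingly to a homologous $S'$) so that the result is a taut foliation $\mathscr{F}'$ containing the components of $S'$ as compact leaves. Each modification is supported in a contractible neighborhood where the plane distribution can be homotoped relative to the boundary, so $\mathscr{F}'$ and $\mathscr{F}$ remain homotopic as oriented plane distributions. Tautness can be verified either from the local model of the modification, or by first invoking Gabai's Theorem~\ref{converse_norm_minimizing} to produce \emph{some} transversely oriented taut foliation realizing the components of $S$ as compact leaves, and then matching it with $\mathscr{F}$ via a branched-surface interpolation near $S$. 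The hardest part will be precisely this last step, which requires simultaneously maintaining tautness and the plane-distribution homotopy class through the modifications. Hyperbolicity of $M$ is essential here: it ensures that the Thurston norm is a genuine norm and, via Candel-type leafwise uniformization, reduces the location of $S'$ to a minimal-surface problem calibrated by $\mathscr{F}$ whose solution is the desired union of leaves.
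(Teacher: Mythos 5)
There is a genuine gap, and it sits exactly where you locate it yourself: the passage from ``$S$ has only saddle tangencies, all of one sign'' to ``after a homotopy of the plane field preserving tautness, a homologous $S'$ becomes a union of leaves.'' The first two steps of your outline (the collapse of the norm inequalities, and the Poincar\'e--Hopf/general-position argument eliminating centers and forcing a uniform sign on the saddle tangencies) are standard and are essentially Thurston's proof of the inequality $\|e(\mathscr{F})\|_{\mathrm{Th}^*}\le 1$; they do not get you close to the conclusion. The final step cannot be carried out by modifications ``supported in contractible neighborhoods of the saddles'': turning $S'$ into a union of leaves is a global operation on $\mathscr{F}$ (the foliation may have no compact leaves at all, and its leaves near $S$ are typically dense), so one must cut and rebuild the foliation along all of $S'$ and then prove that the new plane field is homotopic to the old one --- that homotopy statement is the heart of the matter and is not a local-model verification. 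Your fallback suggestion is also not viable as stated: Theorem \ref{converse_norm_minimizing} does produce \emph{some} taut foliation with the components of $S$ as leaves, but there is no reason it is homotopic to $\mathscr{F}$ as an oriented plane distribution (sharing the pairing $\langle e,[S]\rangle$, or even the Euler class, does not determine the homotopy class, and no ``branched-surface interpolation near $S$'' is known that preserves both tautness and the homotopy class). Likewise, the appeal to Candel uniformization and a ``calibrated minimal-surface problem'' is only a heuristic; calibration-type arguments show norm-minimality, not that a norm-minimizing class is represented by compact leaves of a homotoped foliation.

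For context: the paper does not prove Theorem \ref{fully_marked} at all; it quotes it as the fully marked surface theorem, \cite[Theorem 1.1]{Gabai--Yazdi}, which is the main theorem of a long Acta Mathematica paper. So what you have written is a plausible opening reduction plus an acknowledgement that ``the hardest part'' remains, which is precisely the content of Gabai--Yazdi's work; as a proof it is incomplete, and the missing step is not a routine verification but the theorem itself.
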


This is called the \emph{fully marked surface theorem};
see \cite[Theorem 1.1]{Gabai--Yazdi}.

\section{Alexander polynomials}\label{Sec-AP}
In this preliminary section,
we review the multivariable polynomial and the Alexander polynomials with respect to primitive cohomology classes,
without involving extra twists.
For an overview of twisted Alexander polynomials in more generality,
we recommend the survey of Friedl and Vidussi \cite{Friedl--Vidussi_survey}.
We include direct proofs of 
two simple facts (Lemmas \ref{semicontinuity_nonvanishing} and \ref{cyclic_Betti})
for the reader's convenience.
We recall some special facts regarding $3$--manifolds.

For any (commutative) 
uniquely factorization domain (UFD) $R$,
and for any finitely generated module $V$ over $R$, 
we recall that the \emph{rank} of $V$ over $R$ refers to 
the dimension of $V\otimes_R\mathrm{Frac}(R)$
over the field of fractions $\mathrm{Frac}(R)$ of $R$.
If $R$ is Noetherian,
we recall that the \emph{order} of $V$ over $R$ refers to
any greatest common divisor of the zeroth elementary ideal $E_0(V)$,
which is unique up to a unit of $R$.
More precisely, as $R$ is Noetherian,
$V$ can be finitely presented as cokernel of some matrix $A$ over $R$ of size $m\times n$,
regarded as an $R$--linear homomorphism $R^n\to R^m$;
then $E_0(V)$ is the ideal of $R$ 
generated by all the $n\times n$--minors of $A$ (or the zero ideal if $m\leq n$),
which does not depend on the choice of $A$.
For any finitely generated module $V$ of a Noetherian UFD $R$,
the order of $V$ is nonzero if and only if the rank of $V$ is zero.

Let $X$ be a connected finite cell complex.
Denote by $H$ the free abelianization of $\pi_1(X)$.
Denote by $\tilde{X}^\#$ the covering space of $X$
corresponding to the kernel of $\pi_1(X)\to H$.
The deck transformation group action of $H$
endows $H_1(\tilde{X}^\#;\Integral)$ with a module structure
over the commutative group ring $\Integral[H]$.
As $X$ has only finitely many cells,
$H_1(\tilde{X}^\#;\Integral)$ is finitely generated over $\Integral[H]$.
	
For any primitive cohomology class $\psi\in H^1(X;\Integral)$,
denote by $\tilde{X}^\psi$ the infinite cyclic cover of $X$
dual to $\psi$.
Note that $H^1(X;\Integral)$ is a freely finitely generated over $\Integral$,
by the universal coefficient theorem.
A cohomology class in $H^1(X;\Integral)$ is said to be \emph{primitive}
if it is not divisible by any nonzero integer other than $\pm1$.
The universal free abelian covering projection $\tilde{X}^\#\to X$
naturally factors through $\tilde{X}^\psi$,
corresponding to the quotient homomorphism $\phi_*\colon H\to \Integral$.
Identifying the commutative group ring $\Integral[\Integral]$
with the Laurent polynomial ring $\Integral[t,t^{-1}]$
(in a fixed indeterminant $t$),
$H_1(\tilde{X}^\psi;\Integral)$ is finitely generated over $\Integral[t,t^{-1}]$.

\begin{definition}\label{AP_definition}
	Let $X$ be a connected finite cell complex.
	\begin{enumerate}
	\item
	Denote by $H$ the free abelianization of $\pi_1(X)$,
	and by $\tilde{X}^\#$ the universal free abelian cover of $X$.
	The (universal) \emph{multivariable Alexander polynomial} of $X$
	is defined to be the order of $H_1(\tilde{X}^\#;\Integral)$, 
	as a finitely generated module over $\Integral[H]$.
	It is denoted as a representative element $\Delta^\#_X$ in $\Integral[H]$,
	uniquely up to a unit of $\Integral[H]$.
	\item
	For any primitive cohomology class $\psi\in H^1(X;\Integral)$,
	denote by $\tilde{X}^\psi$ the infinite cyclic cover of $X$ dual to $\psi$.
	The \emph{Alexander polynomial} of $X$ with respect to $\psi$
	is defined to be the order of $H_1(\tilde{X}^\psi;\Integral)$,
	as a finitely generated module over $\Integral[t,t^{-1}]$.
	It is denoted as 
	a representative Laurent polynomial $\Delta^\psi_X(t)$ in $\Integral[t,t^{-1}]$,
	which is unique up to a monomial factor and a sign.
	Note that $\Integral[t,t^{-1}]^\times=\{\pm t^n\colon n\in\Integral\}$
	is the multiplicative group of units of $\Integral[t,t^{-1}]$.
	\end{enumerate}
\end{definition}

It is clear that $\Delta^\#_X$ and $\Delta^\psi_X(t)$ 
are essentially only invariants of the fundamental group $\pi_1(X)$.
In particular, 
it also makes sense to speak of $\Delta^\#_X$ and $\Delta^\psi_X(t)$ 
for any connected compact manifold $X$.

\begin{lemma}\label{semicontinuity_nonvanishing}
	Let $X$ be a connected finite cell complex.
	Suppose that $\psi\in H^1(X;\Integral)$ is a primitive cohomology class,
	such that $\Delta^\psi_X(t)$ does not vanish.
	Then, 
	there exists some open neighborhood $\mathcal{U}=\mathcal{U}(\psi)$
	of $\psi$ in $H^1(X;\Real)$, such that the following property holds.
	
	If $\phi\in H^1(X;\Integral)$ is a primitive cohomology class,
	and	if $\phi$ is a nonzero integral multiple of a rational cohomology class in $\mathcal{U}$, 
	then $\Delta^\phi_X(t)$ does not vanish.	
\end{lemma}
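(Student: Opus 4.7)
The plan is to reduce the lemma to a combinatorial openness statement for the multivariable Alexander polynomial $\Delta^{\#}_X\in\Integral[H]$, which is a single fixed element depending only on $X$. The cases $b_1(X)\leq 1$ are trivial: either there is no primitive class, or $\psi$ is unique up to sign so that any sufficiently small $\mathcal{U}$ does the job. Hereafter I focus on $n:=b_1(X)\geq 2$.

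First I would establish a nonvanishing \emph{dictionary}: $\Delta^{\psi}_X(t)\neq 0$ in $\Integral[t,t^{-1}]$ if and only if $\psi_{*}(\Delta^{\#}_X)\neq 0$, where $\psi_{*}\colon\Integral[H]\to\Integral[t,t^{-1}]$ is the ring homomorphism induced by $h\mapsto t^{\psi(h)}$. This is a standard base-change computation: the intermediate covering $\tilde{X}^{\#}\to\tilde{X}^{\psi}$ has deck group $\ker(\psi)\cong\Integral^{n-1}$, and the Cartan--Leray spectral sequence (equivalently, a Koszul resolution of $\Integral[t,t^{-1}]$ as $\Integral[H]$-module) relates the order of $H_1(\tilde{X}^{\psi};\Integral)$ over $\Integral[t,t^{-1}]$ to the order of $H_1(\tilde{X}^{\#};\Integral)$ over $\Integral[H]$ via $\psi_{*}$, multiplied by a nonvanishing contribution coming from $H_0(\tilde{X}^{\#};\Integral)=\Integral$ which appears as a power of $t-1$ and is hence irrelevant to the nonvanishing question.

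Next I would carry out the combinatorial step. Write $\Delta^{\#}_X=\sum_{h\in S}a_h h$ with $S=\mathrm{supp}(\Delta^{\#}_X)$ finite and each $a_h\neq 0$. By the dictionary, the hypothesis supplies some $n_0\in\Integral$ with $c_{n_0}:=\sum_{h\in S,\,\psi(h)=n_0}a_h\neq 0$. Since $S$ is finite, I can choose an open neighborhood $\mathcal{U}\subset H^1(X;\Real)$ of $\psi$ small enough that whenever $\phi\in\mathcal{U}$ and $h,h'\in S$ with $\psi(h)\neq\psi(h')$, one also has $\phi(h)\neq\phi(h')$. For any primitive $\phi\in H^1(X;\Integral)$ equal to a nonzero integer multiple of some rational class $q\in\mathcal{U}$, the $\phi$-value partition of $S$ coincides (up to rescaling levels) with the $q$-value partition and therefore refines the $\psi$-value partition without merging distinct $\psi$-levels. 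Grouping $\phi_{*}(\Delta^{\#}_X)=\sum_r\bigl(\sum_{h\in S,\,\phi(h)=r}a_h\bigr)\,t^r$, the coefficients contributed by $\phi$-sub-blocks contained in $\{h\in S:\psi(h)=n_0\}$ sum to $c_{n_0}\neq 0$, so at least one such coefficient is nonzero. Therefore $\phi_{*}(\Delta^{\#}_X)\neq 0$, and by the dictionary $\Delta^{\phi}_X(t)\neq 0$.

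The main obstacle is the dictionary in the first step. The relationship between $\Delta^{\psi}_X$ and $\psi_{*}(\Delta^{\#}_X)$ is classical in the literature on Alexander invariants (for instance in McMullen's study of the Alexander norm on $H^1$), but a clean self-contained derivation requires some care with the Cartan--Leray or Koszul machinery in order to identify the correction factor explicitly as a nonvanishing power of $t-1$. Once the dictionary is in place, the subsequent Newton polytope argument is elementary.
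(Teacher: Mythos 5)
Your reduction stands or falls with the ``dictionary'' $\Delta^{\phi}_X(t)\neq 0 \Leftrightarrow \phi_*(\Delta^{\#}_X)\neq 0$, and the direction you actually need at the last step ($\phi_*(\Delta^{\#}_X)\neq 0\Rightarrow\Delta^{\phi}_X\neq 0$) is false for general finite complexes, so this is a genuine gap rather than a routine verification you postponed. Concretely, take $X=(S^1\vee S^1)\times(S^1\vee S^1)$, so $H\cong\Integral^4$ with coordinates $x_1,x_2,y_1,y_2$. The universal free abelian cover is the product of two grid graphs, and $H_1(\tilde{X}^{\#};\Integral)\cong \Integral[H]/(y_1-1,y_2-1)\;\oplus\;\Integral[H]/(x_1-1,x_2-1)$, whose order is $\Delta^{\#}_X\doteq 1$; in particular $\phi_*(\Delta^{\#}_X)\neq0$ for every $\phi$. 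Yet for the primitive class $\phi$ with $\phi(x_1)=1$ and $\phi(x_2)=\phi(y_1)=\phi(y_2)=0$, the infinite cyclic cover is $\tilde{W}\times(S^1\vee S^1)$ with $\tilde{W}$ the $\Integral$--cover of a wedge of circles, and $H_1(\tilde{X}^{\phi};\Integral)\cong \Integral[t,t^{-1}]\oplus(\Integral[t,t^{-1}]/(t-1))^2$ has positive rank, i.e.\ $\Delta^{\phi}_X(t)=0$. (Your hypothesis is satisfied here: $\Delta^{\psi}_X\neq0$ for $\psi=(1,1,1,1)$.) The underlying reason is that vanishing of $\Delta^{\phi}_X$ is governed by the specialization of the \emph{zeroth Fitting ideal} of $H_1(\tilde{X}^{\#};\Integral)$ (equivalently, by whether the support of that module contains the subtorus cut out by $\ker\phi_*$), and while Fitting ideals do commute with the base change $\phi_*$, passing to the greatest common divisor $\Delta^{\#}_X$ does not: components of the support of codimension at least two are invisible to $\Delta^{\#}_X$ but can swallow the subtorus determined by $\phi$. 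The one direction you use first ($\Delta^{\psi}_X\neq0\Rightarrow\psi_*(\Delta^{\#}_X)\neq0$) is fine, and the closed $3$--manifold formula recorded as Theorem \ref{MAP_properties}(2) is a genuinely special (Turaev/McMullen-type) fact, not a consequence of Cartan--Leray base change; it cannot be invoked for an arbitrary finite cell complex, which is the generality of the lemma.

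The good news is that your Newton-polytope step is sound and can be salvaged by applying it to the right object. From $\Delta^{\psi}_X\neq0$ you get that $H_1(\tilde{X}^{\psi};\Integral)$ has rank zero, hence $\psi_*\bigl(E_0(H_1(\tilde{X}^{\#};\Integral))\bigr)\neq0$ (equivalently, some maximal minor $f$ of the boundary matrix $\partial^{\#}_2$ has $\psi_*(f)\neq0$, after accounting for the torsion $\mathrm{Tor}$--contribution of $H_0$, which is annihilated by $t-1$). Running your finite-support argument on this single fixed element $f$ of $\Integral[H]$, rather than on the gcd $\Delta^{\#}_X$, shows $\phi_*(f)\neq0$ for all primitive $\phi$ that are integral multiples of rational classes in a small neighborhood of $\psi$, and this \emph{does} imply $\Delta^{\phi}_X\neq0$. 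That repaired argument is essentially the paper's proof, which works directly with the chain complex and the semicontinuity of the rank of the specialized matrix $\partial^{\alpha}_2$ over $\Integral[\Real]$, never invoking the multivariable polynomial at all.
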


\begin{proof}
	For simplicity, we denote the cellular $\Integral$--chain complex of $\tilde{X}^\#$
	as $(C^\#_\bullet,\partial^\#_\bullet)$.
	For each dimension $n$,	
	$C^\#_n$ is freely finitely generated as a $\Integral[H]$--module.
	By choosing an ordering of the $n$--cells of $X$,
	and a lift of each $n$--cell to $\tilde{X}^\#$,
	the boundary operator $\partial^\#_n\colon C^\#_n\to C^\#_{n-1}$
	can be regarded as a matrix over $\Integral[H]$.
	
	For any $\alpha\in H^1(X;\Real)$,
	we obtain a $\Integral[\Real]$--chain complex 
	$(C^\alpha_\bullet,\partial^\alpha_\bullet)=
	(C^\#_\bullet\otimes_{\Integral[H]}\Integral[\Real],\partial^\#_\bullet\otimes1)$,
	where the $\Integral[H]$--module structure of $\Integral[\Real]$ is induced by
	the group ring homomorphism $\alpha_*\colon \Integral[H]\to\Integral[\Real]$.
	One may think of the elements in $\Integral[\Real]$ as 
	generalized Laurent polynomials	that are (finite) $\Integral$--linear combinations
	of real powers of the indeterminant $t$,
	and think of each $\partial^\alpha_n$ as obtained from $\partial^\#_n$
	by applying $\alpha_*$ to each matrix entry.
	Note that $\Integral[\Real]$ is a UFD and is flat over $\Integral[\Integral]$.
	
	For any primitive $\psi\in H^1(X;\Integral)$,
	the similarly constructed $\Integral[\Integral]$--chain complex
	$(C^\psi_\bullet,\partial^\psi_\bullet)$ can be identified with
	the cellular $\Integral$--chain complex of $\tilde{X}^\psi$.
	
	For any $\alpha\in H^1(X;\Real)$,
	we denote $H^\alpha_n=H_n(C^\alpha_\bullet,\partial^\alpha_\bullet)$.
	This is a $\Integral[\Real]$--module for each $n$.
	If $\alpha$ is nonzero, direct computation shows $H^\alpha_0\cong\Integral$.
	It follows that the $\Integral[\Real]$--rank of $H^\alpha_0$ is zero,
	so the $\Integral[\Real]$--rank of $\mathrm{Ker}(\partial^\alpha_1)$
	is equal to the $\Integral[\Real]$--rank of $\mathrm{Im}(\partial^\alpha_0)=C^\alpha_0$.
	This rank stays constant under nonzero perturbation of $\alpha$.
	On the other hand, the $\Integral[\Real]$--rank of $\mathrm{Im}(\partial^\alpha_2)$
	is the rank of the $\partial^\alpha_2$ as a matrix over $\Integral[\Real]$.
	This rank does not decrease under sufficiently small perturbations of $\alpha$,
	and it stays constant under nonzero $\Real$--scalar multiplication of $\alpha$.
	If $H^\alpha_1$ has $\Integral[\Real]$--rank zero,
	then the $\Integral[\Real]$--rank of $\mathrm{Im}(\partial^\alpha_2)$
	equal to the $\Integral[\Real]$--rank of $\mathrm{Ker}(\partial^\alpha_1)$,
	Therefore, if $H^\alpha_1$ has $\Integral[\Real]$--rank zero,
	and if $\alpha$ is nonzero, 
	then there exists some sufficiently small neighborhood $\mathcal{U}=\mathcal{U}(\alpha)$
	of $\alpha$ in $H^1(X;\Real)$, such that $H^{\beta}_1$ has $\Integral[\Real]$--rank zero
	for any $\beta\in H^1(X;\Real)$ 
	which is a nonzero real multiple of a real cohomology class in $\mathcal{U}$.
		
	Suppose that $\psi\in H^1(X;\Integral)$ is primitive,
	and suppose that $\Delta^\psi_X(t)$ does not vanish.
	Obtain an open neighborhood $\mathcal{U}=\mathcal{U}(\psi)$ of $\psi$ in $H^1(X;\Real)$ as above.
	For any primitive $\phi\in H^1(X;\Integral)$ which is an integral multiple of 
	some rational cohomology class in $\mathcal{U}$, 
	we infer that $H^\phi_1$ has $\Integral[\Real]$--rank zero.
	By flatness of $\Integral[\Real]$ over $\Integral[\Integral]$,
	we infer that $H_1(\tilde{X}^\psi;\Integral)$ has $\Integral[\Integral]$--rank zero.
	This is equivalent to the asserted property that $\Delta^\phi_X(t)$ does not vanish.
\end{proof}

\begin{lemma}\label{cyclic_Betti}
Let $X$ be a connected finite cell complex.
Suppose that $\psi\in H^1(X;\Integral)$ is a primitive cohomology class,
such that $\Delta_X^\psi(t)$ does not vanish.
Then, the following uniform bound holds for for all $m\in\Natural$,
$$b_1(X'_m)\leq \mathrm{deg}\left(\Delta^\psi_X\right)+1.$$
Here, $X'_m$ denotes the $m$--cyclic cover of $X$ dual to $\psi$;
$\mathrm{deg}(\Delta^\psi_X)$ denotes 
the floating degree of the Laurent polynomial $\Delta^\psi_X(t)$,
by which we mean 
the degree of the topmost nonzero term minus the degree of the bottommost nonzero term.
\end{lemma}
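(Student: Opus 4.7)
The plan is to first express $b_1(X'_m)$ in terms of the $\Real[t,t^{-1}]$--module $H_1(\tilde{X}^\psi;\Real)$ modulo $(t^m-1)$, via a Wang-type long exact sequence, and then to bound that dimension by $\deg\Delta^\psi_X$ using the structure theorem over the PID $\Real[t,t^{-1}]$.

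Let $C^\psi_\bullet$ denote the cellular $\Integral[t,t^{-1}]$--chain complex of $\tilde{X}^\psi$ used in the proof of Lemma \ref{semicontinuity_nonvanishing}; it is free in each degree, and the cellular chain complex of $X'_m$ is naturally identified with $C^\psi_\bullet \otimes_{\Integral[t,t^{-1}]} \Integral[t,t^{-1}]/(t^m-1)$. The short exact sequence
$$0 \;\to\; \Integral[t,t^{-1}] \;\xrightarrow{\,t^m-1\,}\; \Integral[t,t^{-1}] \;\to\; \Integral[t,t^{-1}]/(t^m-1) \;\to\; 0$$
induces a short exact sequence of chain complexes, and I would read off the associated long exact sequence in $\Real$--coefficient homology. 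Since $t$ acts trivially on $H_0(\tilde{X}^\psi;\Real) \cong \Real$, this collapses to
$$0 \;\to\; H_1(\tilde{X}^\psi;\Real)/(t^m-1) \;\to\; H_1(X'_m;\Real) \;\to\; \Real \;\to\; 0,$$
whence $b_1(X'_m) = \dim_\Real H_1(\tilde{X}^\psi;\Real)/(t^m-1) + 1$.

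For the bound, the hypothesis that $\Delta^\psi_X$ does not vanish makes $H_1(\tilde{X}^\psi;\Integral)$ a $\Integral[t,t^{-1}]$--torsion module; by flatness of $\Real$ over $\Integral$ the same holds for $H_1(\tilde{X}^\psi;\Real)$ over $\Real[t,t^{-1}]$, with order equal to the image of $\Delta^\psi_X$ up to a unit of $\Real[t,t^{-1}]$ (by functoriality of Fitting ideals under flat base change), and in particular of the same floating degree. Applying the structure theorem over the PID $\Real[t,t^{-1}]$ to write
$$H_1(\tilde{X}^\psi;\Real) \;\cong\; \bigoplus_j \Real[t,t^{-1}]/(q_j),$$
I obtain $\sum_j \deg q_j = \deg\Delta^\psi_X$. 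For each $j$, since $\Real[t,t^{-1}]$ is a PID, one computes $\dim_\Real \Real[t,t^{-1}]/(q_j,\,t^m-1) = \deg\gcd(q_j,\,t^m-1) \leq \deg q_j$; summing over $j$ yields $\dim_\Real H_1(\tilde{X}^\psi;\Real)/(t^m-1) \leq \deg\Delta^\psi_X$, and combining with the previous paragraph gives the claim.

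The only point requiring care, and the main obstacle, is the base change comparison of the order polynomials over $\Integral[t,t^{-1}]$ and $\Real[t,t^{-1}]$: one must verify that the floating degree of any representative of the Fitting ideal $E_0$ is preserved upon tensoring with $\Real$. Beyond this bookkeeping, the remainder of the argument is routine.
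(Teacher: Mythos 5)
Your proof is correct and follows essentially the same route as the paper: both arguments reduce to the fact that nonvanishing of $\Delta^\psi_X(t)$ makes $H_1(\tilde{X}^\psi;\Integral)$ torsion over the Laurent ring, so that with field coefficients it has dimension $\deg\Delta^\psi_X$ (the paper works over $\Rational$, you over $\Real$), and then bound $b_1(X'_m)$ by this dimension plus one. The only differences are cosmetic: the paper obtains the last step by abelianizing the semidirect product $\pi_1(X'_m)\cong\pi_1(\tilde{X}^\psi)\rtimes\langle z^m\rangle$ instead of your Wang sequence, your gcd computation is finer than needed since the coinvariants trivially have dimension at most $\dim_\Real H_1(\tilde{X}^\psi;\Real)$, and the integral-versus-field comparison of the order (via Gauss's lemma) that you flag as the delicate point is passed over silently in the paper as well.
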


\begin{proof}
	For simplicity, we rewrite the infinite cyclic cover of $X$ dual to $\psi$
	as $X'_\infty=\tilde{X}^\psi$.
	If $\Delta_X^\psi(t)$ does not vanish,
	$H_1(X'_\infty;\Integral)$ has $\Integral[t,t^{-1}]$--rank zero.
	Since $\Rational$ is flat over $\Integral$,
	we obtain $H_1(X'_\infty;\Rational)\cong H_1(X'_\infty;\Integral)\otimes_\Integral\Rational$.
	Since $\Rational[t,t^{-1}]$ is a principal ideal domain (PID),
	it follows that $H_1(X'_\infty;\Rational)$ has finite dimension over $\Rational$,
	and dimension	is exactly the degree of the order $\Delta^\psi_X(t)$.
	The fundamental group $\pi_1(X)$ splits as a semidirect product
	$\pi_1(X'_\infty)\rtimes \langle z\rangle$.
	For any $m\in\Natural$, $\pi_1(X'_m)$ is the exactly the subgroup 
	$\pi_1(X'_\infty)\rtimes \langle z^m\rangle$.
	By abelianization, we obtain
	$$b_1(X'_m)\leq b_1(X'_\infty)+1=\mathrm{deg}\left(\Delta^\psi_X\right)+1,$$
	as asserted.
\end{proof}

\begin{remark}\label{cyclic_Betti_remark}
	Instead, if we suppose that $\Delta^\psi_X(t)$ vanishes,
	then $H_1(X'_\infty;\Integral)$ has nonzero rank over $\Integral[t,t^{-1}]$.
	In this case, the following well-known equality 
	implies that $b_1(X'_m)$ can be arbitrarily large.
	$$\mathrm{lim}_{m\to \infty}\,b_1(X'_m)/m
	=\mathrm{rank}_{\Integral[t,t^{-1}]}\left(H_1(X'_\infty;\Integral)\right).$$
	More precisely, the $\Integral[t,t^{-1}]$--rank of $H_1(X'_\infty;\Integral)$
	can be identified as the first $\ell^2$--Betti number of 
	the CW complex $X'_\infty$ with respect to the infinite cyclic 
	deck transformation group action \cite[Lemma 1.34]{Lueck_book};
	the latter is equal to the linear growth rate of $b_1(X'_m)$ in $m$,
	by the approximation conjecture (on $\ell^2$--Betti numbers) 
	which holds true for the infinite cyclic group
	\cite[Conjecture 13.1, Theorem 13.3, and Definition 13.9]{Lueck_book}.	
\end{remark}

For the multivariable Alexander polynomial $\Delta^\#_X$,
we can speak of its (floating) degree with respect to any $\alpha\in H^1(X;\Real)$.
To be precise, we express a representative $\Delta^\#_X$ as $a_1 h_1+\cdots a_s h_s$,
for some distinct $h_1,\cdots,h_s\in H$ and for some nonzero $a_1,\cdots,a_s\in \Integral$.
Then, for any $\alpha\in H^1(X;\Real)$, define
$$\mathrm{deg}_\alpha\left(\Delta^\#_X\right)=\max_{i,j} \left|\alpha(h_i)-\alpha(h_j)\right|,$$
(or zero if $\Delta^\#_X$ vanishes).
Note that the value of $\mathrm{deg}_\alpha$ 
does not change if we the representative changes by a unit of $\Integral[H]$.
In general, $\mathrm{deg}_\alpha(\Delta^\#_X)$ is greater than or equal to
the floating degree of $\alpha_*(\Delta^\#_X)$ (viewed as a generalized Laurent polynomial),
but not necessarily equal to because of possible canceling.
As a function in $\alpha$, 
the $\alpha$--degree of $\Delta^\#_X$ is a seminorm on $H^1(X;\Real)$,
which is also known as the \emph{Alexander norm} \cite{McMullen_norm}.

In the case of $3$--manifolds,
the Alexander norm supplies lower bounds to the Thurston norm,
and the multivariable Alexander polynomial determines 
the Alexander polynomials with respect to primitive cohomology classes.
We only state the case with oriented connected closed $3$--manifolds,
as this is the only case to be considered in the sequel.
The particular formulas below are only needed in Section \ref{Sec-examples}.

\begin{theorem}\label{MAP_properties}
	Let $M$ be an oriented connected closed $3$--manifold.
	\begin{enumerate}
	\item
	For any real cohomology class $\alpha\in H^1(M;\Real)$, the following inequality holds.
	$$\mathrm{deg}_\alpha\left(\Delta^\#_M\right)\leq \|\mathrm{PD}(\alpha)\|_{\mathrm{Th}}+
	\begin{cases}
	0 &b_1(M)\geq2\\
	2 &b_1(M)=1
	\end{cases}
	$$
	Here, $\mathrm{PD}$ denotes the Poincar\'e duality isomorphism.
	\item
	For any primitive cohomology class $\psi\in H^1(M;\Integral)$,
	the following equality holds in $\Integral[t,t^{-1}]$ holds up to a monomial factor and a sign.
	$$\Delta^\psi_M(t)\doteq \psi_*\left(\Delta^\#_M\right)\cdot
		\begin{cases}
	(t-1)^2 &b_1(M)\geq2\\
	1 &b_1(M)=1
	\end{cases}
	$$
	\end{enumerate}
\end{theorem}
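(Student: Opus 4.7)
The plan is to deduce both statements from the $\Integral[H]$-cellular chain complex $C^\#_\bullet$ of the universal free abelian cover $\tilde{M}^\#$. Fix a CW decomposition of $M$ with one $0$-cell, $1$-cells $e_1,\ldots,e_g$ realizing a generating set $x_1,\ldots,x_g$ of $\pi_1(M)$, $2$-cells attached along relators $w_1,\ldots,w_r$, and one $3$-cell (which exists because $M$ is closed and orientable). Lifting each cell to $\tilde{M}^\#$ yields a free $\Integral[H]$-basis of $C^\#_\bullet$: the boundary $\partial^\#_1$ is the column $(x_i - 1)$, $\partial^\#_2$ is the Fox Jacobian $(\partial w_j/\partial x_i)$ (pushed forward to $\Integral[H]$), and $\partial^\#_3$ is the Poincar\'e-dual transpose of $\partial^\#_1$.

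For part (1), I would follow the strategy of McMullen's Alexander-norm-versus-Thurston-norm argument. A representative of $\Delta^\#_M$ is a greatest common divisor of the maximal minors of a suitable truncation of $\partial^\#_2$, and each such minor's $\alpha$-degree is bounded above by the total number of signed intersections between the attaching circles of the $2$-cells and a transverse norm-minimizing surface $S$ representing $\mathrm{PD}(\alpha)$. By choosing the CW model so that its $2$-skeleton is carried by a handle decomposition of $M$ adapted to $S$, this count equals $-\chi(S) = \|\mathrm{PD}(\alpha)\|_{\mathrm{Th}}$, which is the required bound when $b_1(M) \geq 2$. When $b_1(M) = 1$, the same procedure gives a looser bound, with a correction of $1 + b_3(M) = 2$ arising from the $H_0$ and $H_3$ contributions of the closed $3$-manifold $M$, which cannot be canceled when $H$ itself has rank $1$.

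For part (2), I would apply the change of rings $\psi_* \colon \Integral[H] \to \Integral[t, t^{-1}]$ to $C^\#_\bullet$, obtaining the chain complex $C^\psi_\bullet = C^\#_\bullet \otimes_{\Integral[H]} \Integral[t, t^{-1}]$ computing $H_\ast(\tilde{M}^\psi; \Integral)$, and compare orders of $H_1$ on the two sides through the elementary-ideal description. When $b_1(M) = 1$, the homomorphism $\psi_*$ is already an isomorphism of group rings, so $\Delta^\psi_M(t) \doteq \psi_*(\Delta^\#_M)$ at once. When $b_1(M) \geq 2$, the kernel $K = \ker(\psi \colon H \to \Integral)$ is a nontrivial free abelian group; tracking orders under base change (via the Cartan--Leray spectral sequence, or directly via a Tor computation with $H_0(\tilde{M}^\#; \Integral) \cong \Integral$ and its Poincar\'e dual contribution from the top-dimensional cell) produces exactly one $(t-1)$-factor from each end of the chain complex, yielding the claimed $(t-1)^2$ correction.

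The most delicate step in the plan will be justifying precisely the two correction factors — the $+2$ in part (1) for $b_1(M) = 1$ and the $(t-1)^2$ in part (2) for $b_1(M) \geq 2$ — by tracking how Poincar\'e duality of the closed $3$-manifold $M$ interacts with the chain-level base change, and ruling out subdominant cancellations in the greatest common divisor computation. A sensible safeguard is to verify both corrections explicitly for low-complexity examples (for instance, surface bundles or simple graph manifolds) where the chain complex $C^\#_\bullet$ can be written out by hand, before committing to the general argument.
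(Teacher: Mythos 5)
First, some context: the paper does not prove Theorem \ref{MAP_properties} at all; it quotes it with references to \cite[Theorem 1.1]{McMullen_norm} and \cite[Proposition 2.3]{Sun_vhsr}. So your sketch is measured against those proofs, and against them it has genuine gaps. For part (1), your central claim --- that each maximal minor of a truncation of $\partial^\#_2$ has $\alpha$--degree bounded by an intersection count which, for a handle decomposition ``adapted'' to a norm-minimizing surface $S$, equals $-\chi(S)$ --- is unjustified and is false as a minor-by-minor statement. Already for a closed fibered $M$ with fiber $S$ of genus $g$ and fibered class $\phi$, the presentation $\langle x_1,\dots,x_{2g},t \mid tx_it^{-1}f_\#(x_i)^{-1}\rangle$ adapted to $S$ produces a $2g\times 2g$ minor of $\phi$--degree $2g=\|\phi\|_{\mathrm{Th}}+2$, and the attaching circles meet $S$ in $4g$ points, not $2g-2$; the sharp inequality for $\Delta^\#_M$ only appears after taking the gcd over all minors and dividing out the $H_0$/duality contributions. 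This is exactly why McMullen argues indirectly: he first establishes the analogue of part (2), then bounds $\deg\Delta^\psi_M$ via $b_1$ of a norm-minimizing surface dual to a primitive class $\psi$ chosen generically in an open cone (so that no cancellation occurs and $\deg_\psi(\Delta^\#_M)=\deg\,\psi_*(\Delta^\#_M)$ --- a subtlety the paper itself flags), and finally extends to all of $H^1(M;\Real)$ by homogeneity and continuity of the two seminorms. Your plan addresses neither the cancellation/genericity issue nor the passage from primitive integral classes to arbitrary real $\alpha$.

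For part (2), the case $b_1(M)=1$ is indeed immediate, but for $b_1(M)\geq2$ the heart of the statement --- that the discrepancy is exactly $(t-1)^2$ --- is asserted rather than derived. Homology does not commute with the base change $C^\#_\bullet\otimes_{\Integral[H]}\Integral[t,t^{-1}]$, and ``one $(t-1)$ from each end'' must be extracted from an actual computation, e.g.\ by comparing orders of the relative and absolute Alexander modules through the exact sequence involving $H_0$ (of order $t-1$) and invoking duality of the closed chain complex, or by Turaev's torsion formulas; this is precisely the content of \cite[Proposition 2.3]{Sun_vhsr}. Note that the analogous identity for manifolds with toral boundary carries only a single factor of $(t-1)$ (compare Remark \ref{surgery_Alexander_remark}), so the exponent $2$ is sensitive to exactly the closed-manifold bookkeeping you defer. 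In short, your architecture (Fox calculus, chain-level base change, duality) is reasonable, but the two points you set aside as ``delicate'' are the theorem itself, and for part (1) the direct minor-bounding route should be replaced by deducing (1) from (2) together with the classical dual-surface degree bound and a genericity-plus-continuity argument.
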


See \cite[Theorem 1.1]{McMullen_norm} and \cite[Proposition 2.3]{Sun_vhsr} for these facts.

\begin{corollary}\label{MAP_properties_corollary}
	Let $M$ be an oriented connected closed $3$--manifold.
	For any primitive $\psi\in H^1(M;\Integral)$ with nonvanishing $\Delta^\psi_M(t)$,
	if $S$ is an oriented closed surface representing $\mathrm{PD}(\psi)$,
	and if $S$ contains no nonempty null-homologous subunion of connected components, 
	then $S$ is connected.
	In this case, the following inequality holds.
	$$\mathrm{deg}\left(\Delta^\psi_M\right) \leq 2\cdot\mathrm{genus}(S).$$
\end{corollary}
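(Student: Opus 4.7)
I would prove the two assertions in sequence. For the degree bound, suppose $S$ is connected of genus $g$. Since $[S] = \mathrm{PD}(\psi) \neq 0$, we have $g \geq 1$, so $-\chi(S) = 2g - 2 \geq 0$ and $\|\mathrm{PD}(\psi)\|_{\mathrm{Th}} \leq -\chi(S)$. In either case ($b_1(M) = 1$ or $b_1(M) \geq 2$), combining parts (1) and (2) of Theorem \ref{MAP_properties} yields $\deg(\Delta^\psi_M) \leq \|\mathrm{PD}(\psi)\|_{\mathrm{Th}} + 2 \leq 2g$, since the factor $(t-1)^2$ in part (2) when $b_1 \geq 2$ precisely compensates for the absent ``$+2$'' in the $\alpha$-degree bound in part (1).

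To show $S$ is connected, I would argue by contradiction. Suppose $S = S_1 \sqcup \cdots \sqcup S_k$ with $k \geq 2$, and let $N_1, \ldots, N_c$ be the components of $M|S$. Form the dual graph $G$ with vertices $\{N_j\}$ and edges $\{S_i\}$, where $S_i$ joins the two components of $M|S$ meeting its two sides (possibly as a loop). In the infinite cyclic cover $p\colon \tilde{M}^\psi \to M$, the two-sidedness of $S_i$ implies $\psi|_{\pi_1(S_i)} = 0$, and loops in $N_j$ can be pushed off $S$ so $\psi|_{\pi_1(N_j)} = 0$. Consequently $p^{-1}(S)$ and $p^{-1}(M \setminus \nu(S))$ decompose into disjoint $\Integral$-orbits of their base components, and $H_0(p^{-1}(S))$ and $H_0(p^{-1}(M \setminus \nu(S)))$ are free $\Integral[t,t^{-1}]$-modules of ranks $k$ and $c$ respectively. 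A rank count over $\Rational[t,t^{-1}]$ in the Mayer-Vietoris sequence for $\tilde{M}^\psi$, using that $\Delta^\psi_M \neq 0$ forces $H_1(\tilde{M}^\psi;\Rational)$ to be torsion, then forces $k = c$ and so $b_1(G) = 1$.

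Next, the no-null-subunion hypothesis rules out any vertex of degree $1$ in $G$: its incident edge $S_i$ would be a non-loop bridge, whose removal exhibits the rest of $M$ (as a union of $N_j$'s and internal $S_l$'s on the other side) as a $3$-chain with boundary $S_i$, making $\{S_i\}$ a null-homologous subunion. Combined with $b_1(G) = 1$ and connectedness, the degree condition forces $G$ to be a simple cycle $C_c$ with $c = k \geq 2$. An orientation analysis on the cycle---expressing $\partial N_i = -\sigma_{i-1}\,S_{i-1} + \sigma_i\,S_i$ as oriented surfaces with $\sigma_i \in \{\pm 1\}$ recording the side of $S_i$ containing $N_{i+1}$---shows $\sigma_i [S_i]$ is independent of $i$. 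If $\sigma_i \neq \sigma_j$ for some pair, then $[S_i] + [S_j] = 0$, contradicting no-null-subunion on $\{S_i, S_j\}$; hence all $\sigma_i$ coincide, and $[S] = c \cdot [S_1]$, which makes $\psi = c \cdot \mathrm{PD}^{-1}([S_1])$ imprimitive for $c \geq 2$, the desired contradiction.

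The main obstacle, I expect, is the sign bookkeeping in the final cycle step: tracking consistently the orientation of $S_i$ as an oriented surface in $M$, the orientation of $S_i^\pm$ as boundary components of adjacent $N_j$'s under the outward-normal convention, and the cyclic sign $\sigma_i$. The Mayer-Vietoris rank count and the no-leaf conclusion from the no-null-subunion hypothesis are straightforward by comparison.
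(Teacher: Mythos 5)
Your proposal is correct, and on the connectedness assertion it takes a genuinely different route from the paper. The paper dismisses connectedness in one line via Lemma \ref{cyclic_Betti}: the intended mechanism is that $\psi$ factors through the dual graph $G$ of the decomposition of $M$ along $S$, so if $b_1(G)\geq 2$ the finite cyclic covers dual to $\psi$ would have linearly growing first Betti number, contradicting the uniform bound $\mathrm{deg}(\Delta^\psi_M)+1$. You instead work in the infinite cyclic cover and extract $k=c$ (hence $b_1(G)=1$) from a Mayer--Vietoris rank count over $\Rational[t,t^{-1}]$; the count is sound, since exactness at $H_0(p^{-1}(\partial\nu(S)))$ gives $2k\leq k+c$ because $H_1(\tilde{M}^\psi;\Rational)$ has rank zero (nonvanishing $\Delta^\psi_M$), while exactness at the next spot gives $k+c\leq 2k$ because $H_0(\tilde{M}^\psi;\Rational)$ is $\Rational[t,t^{-1}]$--torsion. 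Your endgame -- no degree-one vertices by the no-null-subunion hypothesis, hence $G$ is a cycle, and the sign analysis forcing either a null-homologous pair $\{S_i,S_j\}$ or $\mathrm{PD}(\psi)=c[S_1]$ with $c\geq2$, contradicting primitivity since $H^1(M;\Integral)$ is torsion-free -- is precisely the content that the paper's ``follows easily'' suppresses, and you supply it correctly. What your route buys is independence from Lemma \ref{cyclic_Betti}; what the paper's buys is brevity, the lemma being already available. The degree bound is handled the same way in both, via Theorem \ref{MAP_properties}.

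One blemish: your claim that $[S]=\mathrm{PD}(\psi)\neq 0$ forces $\mathrm{genus}(S)\geq 1$ is false for general closed $3$--manifolds (a nonseparating sphere in $S^2\times S^1$ satisfies every hypothesis of the corollary), so the step $\|\mathrm{PD}(\psi)\|_{\mathrm{Th}}\leq-\chi(S)$, and with it the bound $\mathrm{deg}(\Delta^\psi_M)\leq 2g$, is not justified when $g=0$: Theorem \ref{MAP_properties} only yields $\mathrm{deg}(\Delta^\psi_M)\leq\|\mathrm{PD}(\psi)\|_{\mathrm{Th}}+2$ there. The paper's one-line justification has the same blind spot, and the case is harmless for the paper's applications (hyperbolic $M$); still, as written you should either assume $g\geq1$ or treat the sphere case separately (nonvanishing of $\Delta^\psi_M$ then forces the Alexander polynomial to be a nonzero constant, so the inequality does hold).
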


The assertion that $S$ is connected 
follows easily from Lemma \ref{cyclic_Betti}.
The asserted inequality is immediately implied by Theorem \ref{MAP_properties}.
In the special case when $\psi$ is a fibered class 
and when $S$ is a dual surface fiber,
$\Delta^\psi_M(t)$ can be computed as the characteristic polynomial
of the linear automorphism $f_*\colon H_1(S;\Integral)\to H_1(S;\Integral)$
induced by the monodromy $f\colon S\to S$ (well-defined up to isotopy).
This case achieves the equality in the asserted inequality.

\section{The medley construction}\label{Sec-medley}
In this section, we introduce a general construction
to cyclically stack a finite number of blocks 
with different foliation patterns.
The construction yields a foliated finite cyclic cover,
as precisely described in Lemma \ref{medley_construction}.
This is what we call the \emph{medley construction}.
We prove some basic properties of the medley construction
in Lemmas \ref{medley_construction_adjectives} and \ref{medley_construction_e}.

To avoid unnecessary restrictions, 
we do not assume the surfaces in Lemma \ref{medley_construction} to be connected.
We do not make use of such connectedness in subsequent sections either.
However, it may be noteworthy that
in the scenario of Theorem \ref{main_Alexander},
the surfaces to be used in the construction are indeed connected,
due to Corollary \ref{MAP_properties_corollary} (see Section \ref{Sec-criterion}).

\begin{lemma}\label{medley_construction}
Let $M$ be an oriented connected closed $3$--manifold.
Suppose that $\Sigma\in H_2(M;\Integral)$ is a primitive homology class.
Suppose $S_1,\cdots,S_n\subset M$ 
are embedded, mutually disjoint, oriented, closed subsurfaces,
such that each $S_i$ represents $\Sigma$, 
and such that each $M\setminus S_i$ is connected.
Suppose that 
$\mathscr{F}_0,\mathscr{F}_1,\cdots,\mathscr{F}_n$ are
transversely oriented foliations on $M$,
such that all components of $S_1\sqcup\cdots\sqcup S_n$
are leaves of $\mathscr{F}_0$,
and such that all components of each $S_i$ are leaves of $\mathscr{F}_i$.

Then, for any integers $m_0\geq1,m_1\geq0,\cdots,m_n\geq0$,
there exists 
some transversely oriented foliation $\mathscr{F}'$ 
on a cyclic cover $M'$ dual to $\Sigma$
of degree $m=m_0+m_1+\cdots+m_n$ over $M$
such that the following properties hold.
\begin{itemize}
\item
For each $S_i$, there are $m_0+m_i$ prescribed lifts of $S_i$ to $M'$,
whose components are all leaves of $\mathscr{F}'$.
\item
For each $X=M\setminus S_i$,
there are $m_i$ prescribed lifts of $(X,\mathscr{F}_i|_X)$ to $(M',\mathscr{F}')$.
The inward frontiers and the outward frontiers of these prescribed lifts
are all prescribed lifts of $S_i$ as above.
\item 
For every component $X$ of $M\setminus(S_1\sqcup\cdots\sqcup S_n)$,
there are $m_0$ prescribed lifts of $(X,\mathscr{F}_0|_X)$ to $(M',\mathscr{F}')$.
The inward frontiers and the outward frontiers of these prescribed lifts
are all among the prescribed lifts of $S_1,\cdots,S_n$ as above.
\item
All the above prescribed lifts are mutually disjoint, and 
they form $(M',\mathscr{F}')$ altogether.
\end{itemize}
\end{lemma}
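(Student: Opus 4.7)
The plan is to assemble $M'$ as a quotient of a pool of \emph{block copies}, glued together along matching boundary surfaces in a cyclic pattern. The pool consists of $m_i$ labeled copies of $Y_i:=M\setminus S_i$ (endowed with $\mathscr{F}_i|_{Y_i}$) for each $i=1,\ldots,n$, together with $m_0$ labeled copies of each connected component $X$ of $\hat{M}:=M\setminus(S_1\sqcup\cdots\sqcup S_n)$ (endowed with $\mathscr{F}_0|_X$). The boundary of each block is automatically a union of leaves of the inherited foliation, since $\mathscr{F}_0$ carries every component of every $S_i$ as a leaf and each $\mathscr{F}_i$ carries every component of $S_i$ as a leaf.

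To specify the gluing, I first count that for every component $C$ of every $S_i$ the pool contains exactly $m_0+m_i$ copies of the $+$-side of $C$ and the same number of $-$-sides: one $+$ and one $-$ from each $Y_i$-copy, and one $+$ and one $-$ from the $X$-copies adjacent to $C$ in $M$. These must be paired by gluings, with each pair consisting of a $+$-side and a $-$-side of the same component $C$. To produce a pairing which assembles into a connected cyclic cover of the correct degree, I encode the combinatorial structure by a directed multigraph $\Gamma$: one vertex for each component of each $S_i$; a loop at that vertex for every $Y_i$-copy carrying this component; and, for each $X$-copy, a family of directed edges joining the vertices corresponding to its boundary components, with directions prescribed by the transverse co-orientation. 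At every vertex the in-degree and out-degree coincide ($=m_0+m_i$), and $\Gamma$ is connected because $m_0\geq 1$ guarantees that the $X$-edges alone already realize the adjacency pattern of the $S_i$'s in $M$. A closed Eulerian walk in $\Gamma$ then determines the desired cyclic pairing, and performing the resulting gluings yields a closed $3$-manifold $M'$ with a naturally induced transversely oriented foliation $\mathscr{F}'$.

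Finally, I verify the covering properties. The block-wise inclusions into $M$ assemble into a covering map $p\colon M'\to M$, whose degree is the total wrapping of the Eulerian walk, namely $m_0+\sum_i m_i=m$. Each gluing identifies two boundary sides that differ by a deck transformation of the infinite cyclic cover $\tilde{M}^\psi$ dual to $\psi=\mathrm{PD}(\Sigma)$, so $p$ factors through $\tilde{M}^\psi\to M$; combined with the connectedness coming from the Eulerian walk being closed and spanning, this forces the deck group of $p$ to be cyclic of order $m$, identifying $M'$ as the required $m$-fold cyclic cover dual to $\Sigma$. The four asserted properties are then read off from the construction: each component of $S_i$ contributes $m_0+m_i$ prescribed leaves, each $Y_i$-copy is a prescribed lift of $(Y_i,\mathscr{F}_i|_{Y_i})$, each copy of $X$ is a prescribed lift of $(X,\mathscr{F}_0|_X)$, and the pieces are mutually disjoint and exhaust $M'$. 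The main obstacle I expect is the rigorous justification of the cyclic-cover aspect via the Eulerian walk and the factorization through $\tilde{M}^\psi$ in full generality, where the $S_i$'s may be disconnected and the components of $\hat{M}$ may have intricate boundary patterns; the remaining steps are essentially bookkeeping.
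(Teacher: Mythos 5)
Your assembly-from-a-pool strategy is genuinely different from the paper's (which builds the cover iteratively: first the $m_0$--fold cyclic cover dual to $\Sigma$ with the pullback of $\mathscr{F}_0$, then $m_i$ successive cut-and-insert operations gluing one copy of $(M\setminus S_i,\mathscr{F}_i)$ at a prescribed lift of $S_i$, each raising the degree by one while staying a cyclic cover dual to $\Sigma$). The gap in your version is exactly the point you flag and then pass over: identifying $M'$ with the $m$--fold \emph{cyclic} cover dual to $\Sigma$. Any pairing of plus-sides with minus-sides over the same component gives a degree-$m$ covering in which all blocks lift, and a single closed Eulerian walk can at best guarantee connectedness; it does not guarantee that $p_*\pi_1(M')$ contains $\ker\psi$, where $\psi=\mathrm{PD}(\Sigma)$. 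The sentence ``each gluing identifies two boundary sides that differ by a deck transformation of $\tilde{M}^\psi$, so $p$ factors through $\tilde{M}^\psi\to M$'' is not a proof: what is actually needed is a coherent labelling of the block copies by $\Integral/m$ such that crossing any glued surface in the positive transverse direction raises the label by exactly $1$ (equivalently, that every $\psi$-trivial loop lifts to a loop), and Eulerian-ness of the walk does not provide such a labelling. In addition, the encoding of a block with more than two boundary sides as ``a family of directed edges'' is ambiguous; with the loop-at-each-vertex encoding the graph can even be disconnected although the desired cyclic cover exists.

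The failure is concrete. Take $n=1$ with $S_1=C_1\sqcup C_2$ disconnected (allowed by the hypotheses), $m_0=2$, $m_1=0$, so the pool is two copies of $M\setminus S_1$ with $\mathscr{F}_0$. Encoding each copy as edges $v_1\to v_2$ (enter through $C_1^-$, exit through $C_2^+$) and $v_2\to v_1$, the closed Eulerian walk that alternates the two edges of copy $1$ and then the two edges of copy $2$ translates into the pairing which glues the two sides of $C_2$ within the same copy while interchanging the copies across $C_1$. The result is a connected double cover of $M$ assembled from your blocks, but it is not the cyclic cover dual to $\Sigma$: a loop meeting $S_1$ in a single positive intersection with $C_2$ has $\psi$-value $1$, yet it lifts to a loop. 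So some admissible Eulerian choices produce the wrong cover, and your argument supplies no mechanism for selecting a correct one (note the selection cannot come from the foliations, which play no role in your pairing). To repair it you would either have to fix in advance a $\Integral/m$-grading of the pool and allow only gluings raising the grading by one across each surface, or prove the insertion step the paper uses (cutting a $k$--fold cyclic cover dual to $\Sigma$ along one prescribed lift of $S_i$ and gluing in $M$ cut along $S_i$ yields the $(k+1)$--fold cyclic cover); either way you are led back to the paper's iterative construction, where the cyclic structure is manifest at every stage.
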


\begin{proof}
	First we take an $m_0$--cyclic cover of $M$ dual to $\Sigma$
	with the pullback foliation of $\mathscr{F}_0$.
	Each $S_i$ and each component of $M\setminus(S_0\sqcup\cdots\sqcup S_n)$
	lifts to this cover, 
	since $\Sigma=[S_i]$ in $H_2(M;\Integral)$ for any $S_i$.
	The lifts of $S_0,\cdots,S_n$ 
	and the lifts of the components of $M\setminus(S_0\sqcup\cdots\sqcup S_n)$
	are declared as the prescribed lifts.
	At any prescribed lift of any $S_i$, 
	we can insert a copy of $(M\setminus S_i,\mathscr{F}_i|_{M\setminus S_i})$,
	(namely, cut along the prescribed lift of $S_i$
	and glue in between a copy of $M\setminus S_i$).
	This operation results in a new cyclic cover of $M$ dual to $\Sigma$
	of $1$ degree higher,
	introducing a new prescribed lift of $M\setminus S_i$,
	and a new prescribed lift of $S_i$.
	Redo this operation iterately for $m_i$ times for each $i=1,\cdots,n$.
	We end up with a cyclic cover $M'$ dual to $\Sigma$ 
	of degree $m=m_0+m_1+\cdots+m_n$ over $M$,
	and a transversely oriented foliation $\mathscr{F}'$ on $M'$,
	as asserted.
\end{proof}

\begin{lemma}\label{medley_construction_adjectives}
With the notations and assumptions in Lemma \ref{medley_construction},
if $\mathscr{F}_0,\mathscr{F}_1,\cdots,\mathscr{F}_n$ on $M$ are all taut,
then $\mathscr{F}'$ on $M'$ is also taut.
\end{lemma}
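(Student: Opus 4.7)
The plan is to verify the tautness of $\mathscr{F}'$ via Sullivan's criterion: a transversely oriented codimension-one foliation on a closed oriented $3$-manifold is taut if and only if there exists a closed $2$-form positive on every leaf. I will build such a form $\omega'$ on $M'$ by patching together modified Sullivan forms coming from the $\mathscr{F}_j$ on $M$.

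First, I would pick closed $2$-forms $\omega_j$ on $M$ that are positive on every leaf of $\mathscr{F}_j$, for each $j\in\{0,1,\ldots,n\}$; these are guaranteed by the tautness hypothesis on each $\mathscr{F}_j$. For each $i$ such that $S_i$ is a leaf of $\mathscr{F}_j$ (namely $j=0$ or $j=i$), the restriction $\omega_j|_{S_i}$ is a positive area form, so the integral $\int_{S_i}\omega_j$ is positive. I rescale each $\omega_i$ for $i\geq 1$ by a positive constant so that $\int_{S_i}\omega_i=\int_{S_i}\omega_0$ for every $i$.

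Next, I would fix pairwise disjoint tubular neighborhoods $U_i\cong S_i\times(-\epsilon_i,\epsilon_i)$ of the $S_i$ in $M$ (possible since the $S_i$ are disjoint), and a closed positive $2$-form $\eta_i$ on $U_i$ that is the pullback of an area form on $S_i$ of total integral $\int_{S_i}\omega_0$. For each $(i,j)$ with $S_i$ a leaf of $\mathscr{F}_j$, the restrictions $\omega_j|_{U_i}$ and $\eta_i$ are cohomologous in $H^2(U_i;\Real)\cong\Real$ (both classes determined by a single fiber integral). Hence I can modify $\omega_j$ by an exact $2$-form supported in $U_i$ so that it agrees with $\eta_i$ on a smaller tubular neighborhood $U_i'\subset U_i$. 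Taking $U_i'$ thin enough, the tangent planes of $\mathscr{F}_j$ inside $U_i'$ are close to those of $S_i$, so $\eta_i$ is positive on them; together with the openness of leafwise positivity and the $C^0$-smallness of the modification, this lets the modified $\omega_j$ remain positive on every leaf of $\mathscr{F}_j$.

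Finally, I would assemble $\omega'$ on $M'$ by pulling back the modified $\omega_j$ block by block: on each prescribed lift of a block $(X,\mathscr{F}_j|_X)$, take the pullback along $M'\to M$. These pullbacks agree on overlaps---tubular neighborhoods of the prescribed $S_i$-lifts---since they all coincide with the pullback of $\eta_i$ there. Thus $\omega'$ is a globally defined, smooth, closed $2$-form on $M'$, positive on every leaf of $\mathscr{F}'$: positivity in each block interior comes from the modified $\omega_j$, and positivity on the compact $S_i$-lifts comes from $\eta_i$. Sullivan's criterion then yields the tautness of $\mathscr{F}'$. The main obstacle is the simultaneous modification of each Sullivan form near multiple compact leaves without losing leafwise positivity; this is technical but routine, relying on $C^0$-small bump-function constructions localized in the disjoint $U_i$.
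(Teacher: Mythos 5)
The central step of your argument---arranging that the modified Sullivan forms from adjacent blocks agree exactly on a two-sided collar of each prescribed lift of $S_i$---has a genuine gap. For the pullbacks to match along a lift of $S_i$ you need the modified $\omega_0$ and the modified (rescaled) $\omega_i$ to equal the \emph{same} form $\eta_i$ near $S_i$, which forces, for every connected component $S_{i,k}$ of $S_i$, the equality $\int_{S_{i,k}}\omega_i=\int_{S_{i,k}}\omega_0$ (periods over the components are cohomological invariants of the restriction to $U_i\cong S_i\times(-\epsilon_i,\epsilon_i)$). But Lemma \ref{medley_construction} does not assume the $S_i$ connected, and the paper deliberately allows disconnected $S_i$; a single positive scalar can match only the total integral, while the componentwise periods are determined by the fixed classes $[\omega_0],[\omega_i]\in H^2(M;\Real)$ paired with the classes $[S_{i,k}]$, so in general $\omega_i|_{U_i}$ is simply not cohomologous to your $\eta_i$ and the exact-agreement gluing fails. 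Repairing this would require producing a closed form positive on the leaves of $\mathscr{F}_i$ lying in a class with prescribed periods on the components, which is a nontrivial Sullivan-cone argument you do not give. Even when every $S_i$ is connected, the claim that the correction is $C^0$-small is not right: writing $\omega_j-\eta_i=d\beta$ on $U_i$ and cutting off gives a term $df\wedge\beta$ in which $|df|$ blows up like the reciprocal of the collar thickness while $\beta$ has definite size (the restrictions to $S_i$ differ pointwise even when cohomologous), so positivity on the $\mathscr{F}_j$-planes in the transition region does not follow from openness of leafwise positivity; one needs a quantitative argument (e.g.\ that $dt$ restricted to nearby leaf planes is small compared to the collar width), and it is not automatic that the relevant ratio is small. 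Finally, a more minor point: the foliations here have smooth leaves but only continuous holonomy, and tautness is defined in this paper by the existence of smooth transverse loops through every leaf; invoking the closed-dominating-$2$-form characterization in both directions in this regularity class needs a smoothing/approximation citation that the proposal omits.

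The paper's own proof avoids all of this cohomological bookkeeping: it works directly with the transversal definition, choosing for each $\mathscr{F}_i$ positively transverse arcs $\alpha_q$ running from marked points of $S_i$ back to $S_i$ and arcs or loops $\gamma_L$ through any given leaf, all meeting $S_i$ only at endpoints, and then concatenating lifts of these in $M'$ to produce a positively transverse closed loop through every leaf of $\mathscr{F}'$. I would suggest either adopting that route, or, if you want to keep the differential-form approach, addressing head-on the componentwise period matching for disconnected $S_i$ and giving an honest interpolation estimate near the compact leaves.
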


\begin{proof}
Suppose that $\mathscr{F}_0,\mathscr{F}_1,\cdots,\mathscr{F}_n$ on $M$ are all taut.
Fix a point in each component of $S_i$,
and denote their union as $Q_i\subset S_i$.
Since $\mathscr{F}_i$ is taut,
for each $q\in Q_i$, 
we can construct a path $\alpha_q$ from $q$ to some point in $Q_i$,
such that $\alpha_q$ is positively transverse to $\mathscr{F}_i$,
and $\alpha$ has no intersection with $S_i$ other than the endpoints.
Moreover, for any leaf $L\neq S_i$ in $\mathscr{F}_i$,
we may either construct a loop $\gamma_L$ in $M$ which has no intersection with $S_i$, 
or construct a path $\gamma_L$ 
which has both endpoints in $Q_i$ and no other intersection with $S_i$,
such that $\gamma$ is positively transverse to $\mathscr{F}_i$ and meets $L$.
Furthermore, we denote $S_0=S_1\sqcup\cdots\sqcup S_n$ and $Q_0=Q_1\sqcup\cdots \sqcup Q_n$,
then we obtain similar paths or loops $\alpha_q$ and $\gamma_L$ with respect to $\mathscr{F}_0$.
By concatenating lifts of the above $\alpha_q$ and $\gamma_L$,
we can form a loop $\delta'$ in $M'$ to meet any leaf $L'$ in $\mathscr{F}'$,
such that $\delta'$ is positively transverse to $\mathscr{F}'$.
Therefore, $\mathscr{F}'$ on $M'$ is taut.
\end{proof}

\begin{lemma}\label{medley_construction_e}
With the notations and assumptions in Lemma \ref{medley_construction},
the following Euler class formula holds in $H^2(M;\Integral)$:
$$\kappa_!(e(\mathscr{F}'))=m_0\cdot e(\mathscr{F}_0)+
m_1\cdot e(\mathscr{F}_1)+\cdots+m_n\cdot e(\mathscr{F}_n).$$
Here, $\kappa_!\colon H^2(M';\Integral)\to H^2(M;\Integral)$
denotes the umkehr homomorphism of the covering projection $\kappa\colon M'\to M$,
namely, $\kappa_!=\mathrm{PD}\circ \kappa_* \circ \mathrm{PD}$.
\end{lemma}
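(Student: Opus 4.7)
The plan is to represent $\mathrm{PD}(e(\mathscr{F}'))\in H_1(M';\Integral)$ by the zero locus of a judiciously chosen section of the tangent plane distribution $T\mathscr{F}'$, and then verify that its pushforward under $\kappa$ equals $m_0\,\mathrm{PD}(e(\mathscr{F}_0))+\sum_{i=1}^{n}m_i\,\mathrm{PD}(e(\mathscr{F}_i))$ in $H_1(M;\Integral)$, from which the asserted formula follows by applying $\mathrm{PD}^{-1}$, since $\mathrm{PD}(\kappa_!(e(\mathscr{F}')))=\kappa_*(\mathrm{PD}(e(\mathscr{F}')))$.

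First I would choose compatible sections on $M$. Pick a generic smooth section $\sigma_0\colon M\to T\mathscr{F}_0$ transverse to zero, whose zero locus $Z_0\subset M$ is an oriented closed $1$-submanifold transverse to each $S_j$ and representing $\mathrm{PD}(e(\mathscr{F}_0))$. Since each $S_j$ is a leaf of $\mathscr{F}_0$, the restriction $\sigma_0|_{S_j}$ is a transversely-vanishing section of $TS_j$ whose zero set is exactly $Z_0\cap S_j$. For each $j\in\{1,\ldots,n\}$, pick a generic section $\sigma_j\colon M\to T\mathscr{F}_j$ satisfying the matching condition $\sigma_j|_{S_j}=\sigma_0|_{S_j}$; such a section can be produced by modifying any generic section in a collar of $S_j$ to attain the prescribed boundary value, and this modification does not alter the homology class $[Z_j]=\mathrm{PD}(e(\mathscr{F}_j))$ of the resulting zero locus $Z_j$. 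In particular $Z_j\cap S_j=Z_0\cap S_j$ as $0$-cycles on $S_j$, and a local orientation computation using the matching condition shows that $Z_0$ and $Z_j$ cross $S_j$ at each common point with the same local intersection sign.

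On $M'$, lift $\sigma_0$ to each prescribed $\mathscr{F}_0$-block and lift $\sigma_i$ to each prescribed $\mathscr{F}_i$-block. Along each prescribed lift $\tilde{S}_j$ of $S_j$, the two adjacent lifted sections restrict to the same lift of the common $TS_j$-section $\sigma_0|_{S_j}$, so they patch into a continuous section $\sigma'$ of $T\mathscr{F}'$ on $M'$. A small perturbation supported near the interface leaves renders $\sigma'$ smooth and transverse to zero without changing the homology class of the zero locus, yielding an oriented closed $1$-cycle $Z'\subset M'$ which realizes $\mathrm{PD}(e(\mathscr{F}'))$. The equal-sign matching observed above guarantees that across each interface leaf the incoming and outgoing local $1$-chain contributions cancel in pairs, so $Z'$ is genuinely a cycle.

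Finally, compute the pushforward chain by chain. Choose triangulations of $Z_0,\ldots,Z_n$ in which the points of $Z_0\cap S_j$ and $Z_i\cap S_i$ are vertices; then every $1$-simplex of $Z_0$ lies in the closure of some component of $M\setminus(S_1\sqcup\cdots\sqcup S_n)$, and every $1$-simplex of $Z_i$ lies in the closure of $M\setminus S_i$. Since $\kappa$ restricts to a diffeomorphism on each prescribed lift of a piece, summing over all such lifts yields the chain-level identity
\[
\kappa_*(Z')\;=\;m_0\,Z_0\;+\;\sum_{i=1}^{n}m_i\,Z_i\qquad\text{in } C_1(M;\Integral).
\]
Passing to homology and applying $\mathrm{PD}^{-1}$, together with $\kappa_!=\mathrm{PD}\circ\kappa_*\circ\mathrm{PD}$, yields the asserted formula. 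The main technical delicacy is the matching of the sections along the leaves $S_j$: it is this matching—together with the orientation bookkeeping that gives equal intersection signs—that ensures both that $Z'$ is a closed cycle in $M'$ and that its pushforward reconstructs $m_0\,Z_0+\sum_{i=1}^{n}m_i\,Z_i$ cleanly, with no leftover boundary on the interface leaves.
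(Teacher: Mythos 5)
Your argument is correct, but it takes a genuinely different route from the paper. The paper never touches cycle representatives: it identifies each $T\mathscr{F}$ with a complex line bundle $\mathcal{L}$, uses the multiplicative transfer of line bundles along the finite cover together with the identity $c_1(\kappa_!\mathcal{L}')=\kappa_!(c_1(\mathcal{L}'))$, and then compares $\mathcal{L}'$ with the pullback $\mathcal{L}'_0$ of $\mathcal{L}_0$; since these two bundles agree over every prescribed lift, the bundle $\overline{\mathcal{L}'_0}\otimes\mathcal{L}'$ is trivialized there, and the transfer collapses to $\bigotimes_i(\overline{\mathcal{L}_0}\otimes\mathcal{L}_i)^{\otimes m_i}$, giving the formula after applying $c_1$ and $\kappa_!(c_1(\mathcal{L}'_0))=m\cdot c_1(\mathcal{L}_0)$. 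You instead realize $\mathrm{PD}(e)$ by zero loci of sections chosen to agree along the common leaves (using $T\mathscr{F}_j|_{S_j}=TS_j=T\mathscr{F}_0|_{S_j}$), patch them over the cover, and push the resulting $1$--cycle forward block by block. What each buys: the paper's version stays entirely at the level of characteristic classes, so there is no transversality, orientation, or smoothing bookkeeping, and the cancellation is visible bundle-theoretically; your version makes the geometric mechanism explicit (the Euler cycle of the medley is literally the medley of the Euler cycles) and yields the chain-level identity $\kappa_*(Z')=m_0Z_0+\sum_i m_iZ_i$, which is arguably more illuminating. The price is exactly the technical care you flag: you need $\sigma_0$ generic enough that its restriction to each $S_j$ is transverse as a section of $TS_j$ (which also forces $Z_0,Z_j$ transverse to $S_j$ at the common zeros), the standard local fact that the intersection sign of the zero locus with a leaf equals the index of the restricted section (this is what makes $Z'$ a closed oriented cycle), and a word about the fact that $T\mathscr{F}'$ is only a continuous plane field, so either smooth the bundle before perturbing $\sigma'$ or argue via local degrees/obstruction theory that the patched zero set already represents $\mathrm{PD}(e(\mathscr{F}'))$. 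None of these is a gap, but they are the points a referee would ask you to spell out.
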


\begin{proof}
	By fixing a Riemannian metric on an oriented $3$--manifold $N$,
	any transversely oriented foliation $\mathscr{F}$ on $N$
	determines an oriented rank--$2$ real vector bundle $T\mathscr{F}$ over $N$,
	which can be identified as a complex line bundle $\mathcal{L}$ over $N$.
	In particular, $e(\mathscr{F})=c_1(\mathcal{L})$ holds in $H^2(N;\Integral)$.
	Moreover, if $\kappa\colon N'\to N$ is any finite covering projection,
	any complex line bundle $\mathcal{L}'$ over $N'$ 
	gives rise to a complex line bundle $\kappa_!\mathcal{L'}$ on $N$,
	such that the fiber of $\kappa_!\mathcal{L'}$ at $q\in N$ is 
	the complex tensor product of all	$\mathcal{L}'_{q'}$, 
	where $q'$ ranges over the lifts of $q$.
	
	When $N$ is closed, 
	the identity $c_1(\kappa_!\mathcal{L}')=\kappa_!(c_1(\mathcal{L}'))$ 
	holds in $H^2(N;\Integral)$.
	An easy way to see this is as follows.
	For any embedded loop $\alpha'\colon S^1\to N'$, 
	one can construct a map $N'\to \Complex\mathrm{P}^1$,
	such that every point outside a compact tubular neighborhood of $\alpha'$
	is mapped to $\infty$ in $\Complex\mathrm{P}^1\cong\Complex\cup\{\infty\}$,
	and in the tubular neighborhood, parametrized as $D^2\times S^1$,
	the map factors through the projection $D^2\times S^1\to D^2$,
	and descending to a map of $D^2$ to $\Complex\mathrm{P}^1\cong\Complex\cup\{\infty\}$
	which maps $\partial D$ to $\infty$ 
	and maps $\mathrm{int}(D^2)$ orientation-preserving homeomorphically onto $\Complex$.
	Composing with the standard inclusion of $\Complex\mathrm{P}^1$ into 
	$\Complex\mathrm{P}^\infty\simeq K(\Integral,2)$,
	the resulting map $N'\to \Complex\mathrm{P}^\infty$ represents an element in $H^2(N';\Integral)$,
	which is exactly the Poincar\'e dual of $[\alpha]\in H_1(N';\Integral)$,
	and which is also the pullback of the universal first Chern class.
	With this interpretation understood, we can rewrite 
	$c_1(\mathcal{L}')=\mathrm{PD}([\alpha'])$ for some $\alpha'$ as above,
	and reconstruct $\mathcal{L}'$ (isomorphically) 
	as a pullback of the universal complex line bundle
	via the above map $N'\to \Complex\mathrm{P}^\infty$.
	We may also assume that $D^2\times S^1$ 
	embeds into $N$ under $\kappa$,
	then our construction implies canonical isomorphisms
	$(\kappa_!\mathcal{L}')_q\cong \Complex$ for any $q\in N\setminus \kappa(D^2\times S^1)$,
	and $(\kappa_!\mathcal{L}')_q\cong \mathcal{L}'_{q'}$ 
	for any $q\in\kappa(D^2\times S^1)$, 
	where $q'$ is the unique inverse point in $D^2\times S^1$.
	So, $\kappa_!\mathcal{L}'$ over $N$ is canonically isomorphic to
	the pullback of the universal complex line bundle
	via	a classifying map $N\to \Complex\mathrm{P}^\infty$ 
	constructed the same way as above,
	with respect to the embedded loop $\kappa\circ\alpha'\colon S^1\to N$ 
	and the parametrized tubular neighborhood $\kappa(D^2\times S^1)$.
	This implies 
	$c_1(\kappa_!\mathcal{L}')=\mathrm{PD}(\kappa_*[\alpha'])$,
	and 
	the identity $c_1(\kappa_!\mathcal{L}')=\kappa_!(c_1(\mathcal{L}'))$ follows.
		
	To prove Lemma \ref{medley_construction_e}, 
	we obtain a complex line bundle $\mathcal{L}'$ over $M'$,
	associated to $\mathscr{F}'$,
	and similarly, 
	$\mathcal{L}_0,\mathcal{L}_1,\cdots,\mathcal{L}_n$ over $M$,
	associated to $\mathscr{F}_0,\cdots,\mathscr{F}_n$.
	By assumption, $\mathcal{L}_0$ and $\mathcal{L}_i$ are identical over $S_i$.
	
	To compute $\kappa_!(c_1(\mathcal{L}'))$,
	we compare $\mathcal{L}'$ 
	with another complex line bundle $\mathcal{L}'_0$ over $M'$,
	namely, the pullback of $\mathcal{L}_0$ to $M'$.
	Note that $\mathcal{L}'_0$ and $\mathcal{L}'$ are identical
	over any prescribed lift of any $S_i$,
	and over any prescribed lift of any component of $M\setminus(S_1\sqcup\cdots\sqcup S_n)$,
	so the complex line bundle 
	$\overline{\mathcal{L}'_0}\otimes \mathcal{L}'$
	is canonically isomorphic to the constant complex line bundle $\Complex$ over those prescribed lifts.
	Therefore, we obtain a canonical isomorphism
	of complex line bundles over $M$:
	$$\kappa_!\left(\overline{\mathcal{L}'_0}\otimes\mathcal{L}'\right)
	\cong
	\bigotimes_{i=1}^n \left(\overline{\mathcal{L}_0}\otimes \mathcal{L}_i\right)^{\otimes m_i}.
	$$
	This implies
	$$\kappa_!\left(c_1(\mathcal{L}')\right)-\kappa_!\left(c_1(\mathcal{L}'_0)\right)
	=\sum_{i=1}^n m_i\cdot \left(c_1(\mathcal{L}_i)-c_1(\mathcal{L}_0)\right)
	$$
	On the other hand, since $\mathcal{L}'_0$ is the pullback of $\mathcal{L}_0$ to $M'$,
	and since $M'$ is a cyclic cover of degree $m=m_0+m_1+\cdots+m_n$,
	we obtain
	$$\kappa_!\left(c_1(\mathcal{L}'_0)\right)=(m_0+m_1+\cdots+m_n)\cdot c_1(\mathcal{L}_0).$$
	Therefore, we obtain the identity in $H^2(M;\Integral)$:
	$$\kappa_!\left(c_1(\mathcal{L}')\right)
	=m_0\cdot c_1(\mathcal{L}_0)+m_1\cdot c_1(\mathcal{L}_1) +\cdots+ m_n\cdot c_1(\mathcal{L}_n),
	$$
	which is the same as the asserted formula in Lemma \ref{medley_construction_e}.	
\end{proof}

\section{The Alexander polynomial criterion}\label{Sec-criterion}

This section is devoted to the proof of Theorem \ref{main_Alexander}.

\begin{lemma}\label{medley_application}
	Let $M$ be an oriented closed hyperbolic $3$--manifold.
	Suppose that $F\subset \partial\mathcal{B}_{\mathrm{Th}^*}(M)$
	and $F^\vee\subset \partial\mathcal{B}_{\mathrm{Th}}(M)$ 
	are a dual pair of closed faces.
	Suppose that $w\in H^2(M;\Real)$ is a rational point in the interior of $F$.
	Suppose that $\Sigma\in H_2(M;\Integral)$ is a primitive homology class,
	such that $\Sigma$ lies in the cone over the interior of $F^\vee$.

	Then, there exist some finite cyclic cover $\tilde{M}$ of $M$ dual to $\Sigma$, 
	and some transversely oriented taut foliation $\tilde{\mathscr{F}}$ on $\tilde{M}$,
	such that the following equality holds in $H^2(M;\Real)$:
	$$\kappa_!(e(\tilde{\mathscr{F}}))=[\tilde{M}:M]\cdot w.$$
	Here, $\kappa_!\colon H^2(\tilde{M};\Real)\to H^2(M;\Real)$ denotes umkehr homomorphism for 
	the covering projection $\kappa\colon\tilde{M}\to M$.
\end{lemma}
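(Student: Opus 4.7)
The plan is to realize $w$ via the medley construction of Lemma~\ref{medley_construction}, together with its tautness Lemma~\ref{medley_construction_adjectives} and its Euler-class formula Lemma~\ref{medley_construction_e}. First, I would enumerate the finitely many vertices of $F$ as $v_1,\ldots,v_n$ and express $w$ as a positive rational convex combination $w=\sum_{i=1}^n \lambda_i v_i$ with $\lambda_i>0$ summing to $1$. After choosing $m$ large enough to clear denominators of the $\lambda_i$'s and of the (rational) barycentric coordinates of $e(\mathscr{F}_0)$ with respect to the $v_i$'s (where $\mathscr{F}_0$ is the master foliation constructed below), there will be positive integers $m_0,m_1,\ldots,m_n$ summing to $m$ with
\[ m\cdot w \;=\; m_0\cdot e(\mathscr{F}_0)+\sum_{i=1}^n m_i\,v_i. \]
This combinatorial data will feed into the medley.

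Next, I would prepare the foliated building blocks. Fix a connected, norm-minimizing, embedded, oriented closed surface $S\subset M$ representing $\Sigma$; its complement is connected because $\Sigma$ is primitive. In a product collar $S\times[-1,1]$ take $n$ mutually disjoint parallel copies $S_1,\ldots,S_n$. Duality between $F$ and $F^\vee$ gives $\langle v_i,[S_j]\rangle=\|\Sigma\|_{\mathrm{Th}}=-\chi(S_j)$ for every $i,j$. Applying Theorem~\ref{converse_norm_minimizing} to the norm-minimizing disjoint union $S_1\sqcup\cdots\sqcup S_n$ (a representative of $n\Sigma$ with total Euler characteristic $-n\|\Sigma\|_{\mathrm{Th}}$) supplies a transversely oriented taut foliation $\mathscr{F}_0$ on $M$ containing all the $S_i$ as compact leaves; by Theorem~\ref{norm_minimizing}, $e(\mathscr{F}_0)$ lies in $F$ and hence admits rational barycentric coordinates over the $v_j$'s. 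For each vertex $v_i$, Corollary~\ref{ecoc_vertex} supplies a taut foliation with Euler class $v_i$, and Theorem~\ref{fully_marked} then modifies it into a taut foliation $\mathscr{F}_i$ on $M$ with $e(\mathscr{F}_i)=v_i$ and with $S_i$ among its compact leaves. Feeding the data $(S_1,\ldots,S_n;\mathscr{F}_0,\mathscr{F}_1,\ldots,\mathscr{F}_n)$ into Lemma~\ref{medley_construction} with multiplicities $(m_0,m_1,\ldots,m_n)$ produces a transversely oriented taut foliation $\tilde{\mathscr{F}}$ on a cyclic cover $\tilde{M}$ of $M$ dual to $\Sigma$ of degree $m$. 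Lemma~\ref{medley_construction_e} then delivers $\kappa_!(e(\tilde{\mathscr{F}}))=m\cdot w$ as required.

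The main technical obstacle is a compatibility issue: Theorem~\ref{fully_marked} guarantees only that $\mathscr{F}_i$ contains a compact leaf \emph{homologous to} $S_i$, whereas Lemma~\ref{medley_construction} requires $S_i$ itself to be a shared leaf of both $\mathscr{F}_0$ and $\mathscr{F}_i$. Bridging this gap will require a supplementary argument; for instance, one may carry the foliation along an ambient isotopy from the produced leaf onto $S_i$ (using the product-neighborhood structure at a compact leaf of a transversely oriented taut foliation), or invoke a finer application of the sutured-manifold machinery underlying the constructions of Gabai and Gabai--Yazdi so as to arrange $S_i$ exactly as a leaf of $\mathscr{F}_i$ from the start. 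This is the delicate step of the proof, and the one that will require the most care.
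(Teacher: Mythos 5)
The gap you flag at the end is genuine, and neither of your suggested repairs works. The fully marked surface theorem (Theorem \ref{fully_marked}) only produces \emph{some} embedded closed surface $S'$ homologous to $S_i$ whose components are leaves of a modified foliation; it gives you no control on where $S'$ sits in $M$, it may well be disconnected, and in general a surface homologous to $S_i$ is nowhere near isotopic to $S_i$, so ``carrying the foliation along an ambient isotopy from the produced leaf onto $S_i$'' is not available. The alternative of re-running the sutured hierarchy so that the prescribed $S_i$ is a leaf of $\mathscr{F}_i$ \emph{and} $e(\mathscr{F}_i)=v_i$ is exactly the hard direction of the Euler class one problem: Gabai's Theorem \ref{converse_norm_minimizing} lets you prescribe the leaf but gives no control of the Euler class beyond the constraint $\langle e,\,[S_i]\rangle=-\chi(S_i)$, while Corollary \ref{ecoc_vertex} prescribes the Euler class $v_i$ but not the leaf. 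So as written, your construction cannot feed Lemma \ref{medley_construction}, whose hypotheses require the very same surfaces $S_1,\dots,S_n$ to be leaf unions of $\mathscr{F}_0$ and of the respective $\mathscr{F}_i$.

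The paper resolves this by giving up the requirement that the $S_i$ be parallel copies of one surface in $M$, and by moving the ``master'' foliation to a cover. For each vertex $v_i$ one takes $\mathscr{F}_i$ with $e(\mathscr{F}_i)=v_i$ and lets $S_i$ be \emph{whatever} surface representing $\Sigma$ the fully marked surface theorem hands you as a leaf union of (the modified) $\mathscr{F}_i$; these $S_i$ are different and may intersect. One then passes to a $d$--cyclic cover $M'$ dual to $\Sigma$, for $d$ large, in which the complements $M\setminus S_i$ admit lifts with disjoint closures, chooses disjoint homologous lifts $S'_1,\dots,S'_n$, pulls back each $\mathscr{F}_i$ to $\mathscr{F}'_i$ (which still has $S'_i$ as leaves), and only then applies Theorem \ref{converse_norm_minimizing} \emph{on $M'$} to the norm-minimizing union $S'_1\sqcup\cdots\sqcup S'_n$ to get the master foliation $\mathscr{F}'_0$. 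Its Euler class is not prescribed, but one shows its umkehr image is $d\cdot v_0$ for some rational $v_0\in F$, and since $w\in\mathrm{int}(F)$ one can write $w=\mu_0 v_0+\mu_1 v_1+\cdots+\mu_n v_n$ with $\mu_0>0$ small and rational; the medley on a further cyclic cover of $M'$ with multiplicities proportional to $(\mu_0,\dots,\mu_n)$ then gives the formula. Your combinatorial bookkeeping (absorbing the uncontrolled Euler class of the master foliation using interiority of $w$) is exactly right and matches the paper; the missing idea is this two-stage passage to a cover, which is what makes the shared-leaf hypotheses of Lemma \ref{medley_construction} attainable without solving the compatibility problem on $M$ itself.
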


\begin{proof}
	%
	%
	Enumerate the vertices of $F$ as
	$$v_1,\cdots,v_n\in F.$$
	By the Euler class one theorem for vertices (Corollary \ref{ecoc_vertex}),
	for each $v_i$, there exists some transversely orientable taut foliation $\mathscr{F}_i$ on $M$
	with real Euler class $e(\mathscr{F}_i)=v_i$.
	By assumption, $\langle e(\mathscr{F}_i),\Sigma\rangle=\|\Sigma\|_{\mathrm{Th}}$ 
	holds for each $\mathscr{F}_i$.
	By the fully marked surface theorem (Theorem \ref{fully_marked}),
	possiblying after modifying $\mathscr{F}_i$ 
	without altering the declared properties,
	we may assume that for each $\mathscr{F}_i$,
	there exists some embedded, oriented, closed subsurface $S_i$ of $M$ representing $\Sigma$,
	such that the components of $S_i$ are all leaves of $\mathscr{F}_i$.
		
	For any sufficiently large positive integer $d$,
	the complements $M\setminus S_1,\cdots,M\setminus S_n$ 
	can be lifted to a $d$--cyclic cover $M'$ of $M$ dual to $\Sigma$,
	such that the lifts have mutually disjoint closures in $M'$.
	Note that each $M\setminus S_i$ is connected, 
	because $\Sigma$ is primitive and because $\mathscr{F}_i$ are transversely oriented taut.
	Take some $d$ and $M'$ with this property, 
	and take some mutually disjoint lifts $S'_1,\cdots,S'_n$ for $S_1,\cdots,S_n$.
	We can also require $S'_i$ 
	to be the outward frontier of a lift of $M\setminus S_i$.
	This way, the lifts $S'_i$ are mutually homologous in $M'$.
	Denote by $\mathscr{F}'_1,\cdots,\mathscr{F}'_n$
	the pullback foliations of $\mathscr{F}_1,\cdots,\mathscr{F}_n$ to $M'$.
	Denote by $\Sigma'$ the common homology class $[S'_i]\in H_2(M';\Integral)$.
			
	By construction, $S'_1\sqcup\cdots\sqcup S'_n$ is
	an embedded, closed, oriented, norm-minimizing subsurface of $M'$,
	representing the nontorsion homology class $n\Sigma'\in H_2(M';\Integral)$.
	By Gabai's construction of taut foliations 
	from norm-minimizing surfaces (Theorem \ref{converse_norm_minimizing}),
	there exists some transversely oriented taut foliation $\mathscr{F}'_0$ on $M'$,
	such that the components of $S'_1\sqcup\cdots\sqcup S'_n$
	are all leaves of $\mathscr{F}'_0$.
	
	Observe that the image of the real Euler class $e(\mathscr{F}'_0)$
	under the umkehr homomorphism $H^2(M';\Integral)\to H^2(M;\Integral)$
	takes the form $d\cdot v_0$, 
	for some rational point
	$$v_0\in F$$ 
	depending on $(M',\mathscr{F}'_0)$.
	In fact,
	as $\Sigma$ lies in the cone over the interior of $F^\vee$,
	the linear function $u\mapsto \langle u,\Sigma\rangle$
	on $\mathcal{B}_{\mathrm{Th}}(M)$ maximizes exactly on $F$
	(with maximum value $\|\Sigma\|_{\mathrm{Th}}$).
	Hence, 
	the linear function $u'\mapsto \langle u',\Sigma'\rangle$
	on $\mathcal{B}_{\mathrm{Th}}(M)$ maximizes exactly on $F'$
	(with the same maximum value),
	where $F'$ denotes the maximal closed face of 
	$\partial \mathcal{B}_{\mathrm{Th}^*}(M')$
	that contains the pullback of $F$.
	By construction, we obtain
	$\langle e(\mathscr{F}'_0),\Sigma'\rangle=-\chi(S_i)=\|\Sigma'\|_{\mathrm{Th}}$.
	This shows that $e(\mathscr{F}'_0)$ lies in $F'$,
	so its image under the umkehr homomorphism lies in $d\cdot F$.
	
	With the vertices $v_1,\cdots,v_n$ of $F$ and the rational point $v_0\in F$ as above,
	we can find rational numbers $\mu_0>0,\mu_1\geq0,\cdots,\mu_n\geq0$ 
	with $\mu_0+\mu_1+\cdots+\mu_n=1$, such that 
	the given rational point $w\in \mathrm{int}(F)$	can be written as
	$$w=\mu_0\cdot v_0+\mu_1\cdot v_1+\cdots+\mu_n\cdot v_n.$$
	For example, we can choose some rational $\mu_0>0$ small enough, 
	making sure that $w-\mu_0\cdot v_0$ still lies in $\mathrm{int}(F)$,
	then $w-\mu_0\cdot v_0$ can be written as a convex combination 
	$\mu_1\cdot v_1+\cdots+\mu_n\cdot v_n$.
	Note that this is where we need the assumption $w\in\mathrm{int}(F)$
	(indeed, the construction would fail for any $w\in\partial F$ 
	if $v_0$ happens to lie in $\mathrm{int}(F)$).
	Take some $(\mu_0,\mu_1,\cdots,\mu_n)$ as above, 
	and take any sufficiently divisible positive integer $D$,
	such that $m'_0=\mu_0 D/d,m'_1=\mu_1 D/d,\cdots,m'_n=\mu_n D/d$ are all integers.
	
	Finally, we apply Lemma \ref{medley_construction} to the setting with 
	$M'$, $\Sigma'$, and $(\mathscr{F}'_0,\mathscr{F}'_1,\cdots,\mathscr{F}'_n)$,
	using the integers $(m'_0,m'_1,\cdots,m'_n)$.
	Then, we obtain a $D/d$--cyclic cover $\tilde{M}$ over $M'$ dual to $\Sigma'$,
	and a transversely oriented foliation $\tilde{\mathscr{F}}$ on $\tilde{M}$.
	By Lemma \ref{medley_construction_adjectives},
	$\tilde{\mathscr{F}}$ is taut, as $\mathscr{F}'_0,\mathscr{F}'_1,\cdots,\mathscr{F}'_n$ are all taut.
	Note that $\tilde{M}$ is also a $D$--cyclic cover of $M$ dual to $\Sigma$.
	By Lemma \ref{medley_construction_e}, we compute in $H^2(M;\Real)$:
	$$\kappa_!(e(\tilde{\mathscr{F}}))=D\cdot(\mu_0\cdot v_0+\mu_1\cdot v_1+\cdots+\mu_n\cdot v_n)=[\tilde{M}:M]\cdot w,$$
	as asserted.	
\end{proof}.

With the above preparation, we prove Theorem \ref{main_Alexander} as follows.

Let $M$ be an oriented closed hyperbolic $3$--manifold.
Let $F\subset \partial\mathcal{B}_{\mathrm{Th}^*}(M)$
and $F^\vee\subset \partial\mathcal{B}_{\mathrm{Th}}(M)$ 
be a dual pair of closed faces.
Let $w\in H^2(M;\Real)$ be a rational point in the interior of $F$.
Let $\psi\in H^1(M;\Integral)$ be a primitive cohomology class,
such that $\mathrm{PD}(\psi)$ lies in the cone over the interior of $F^\vee$.

If the Alexander polynomial $\Delta_M^\psi(t)$ does not vanish,
the first Betti number is uniformly bounded 
for all finite cyclic covers of $M$ dual to $\psi$ (Lemma \ref{cyclic_Betti}).
Suppose the maximum first Betti number is achieved by
a $d$--cyclic cover $M'$ of $M$ dual to $\psi$.
Note that the pullback $\psi'\in H^1(M';\Integral)$ has divisibility $d$,
so $\Sigma'=\mathrm{PD}(\psi'/d)$ is a primitive cohomology class.
Denote by $w'\in H^2(M';\Real)$ the pullback of $w$ to $M'$.
Denote by $F'\subset\partial \mathcal{B}_{\mathrm{Th}^*}(M')$
the minimal closed face containing the pullback of $F$.
We observe $w'$ lies in $\mathrm{int}(F')$.
and $\Sigma'$ lies in the cone over the interior of 
$(F')^\vee \subset\partial \mathcal{B}_{\mathrm{Th}}(M')$.
Applying Lemma \ref{medley_application} 
to $M'$ with respect to $w'$ and $\Sigma'$,
we obtain some finite cyclic cover $\tilde{M}$ of $M'$ dual to $\Sigma'$,
and some transversely oriented taut foliation $\tilde{F}$ on $\tilde{M}$,
such that the image of the real Euler class $e(\tilde{F})$
under the umkehr homomorphism $H^2(\tilde{M};\Real)\to H^2(M';\Real)$
is equal to $[\tilde{M}:M]\cdot w'$.

Since $M'$ already maximizes the first Betti number among all finite cyclic covers 
of $M$ dual to $\psi$,
the umkehr homomorphism $H^2(\tilde{M};\Real)\to H^2(M';\Real)$
is an isomorphism.
It follows that $e(\tilde{F})\in H^2(\tilde{M};\Real)$ 
is equal to the pullback $\tilde{w}$ of $w'\in H^2(M';\Real)$ to $\tilde{M}$,
which is also the pullback of $w\in H^2(M;\Real)$ to $\tilde{M}$.
Therefore,
we have verified that 
$\tilde{M}$ and $\tilde{\mathscr{F}}$ are as asserted in Theorem \ref{main_Alexander}.

This completes the proof of Theorem \ref{main_Alexander}.

\section{Totally rationally null-homologous knots}\label{Sec-totally_rnh}

For any oriented $3$--manifold $M$, 
an oriented null-homologous knot $K\subset M$
refers to a locally flatly embedded, oriented circle
which bounds some embedded, oriented compact subsurface of $M$.
In this case, there is a meridian-longitude pair with respect to $K$,
similar as usual for classical knots in $S^3$.
To be precise, take a compact tubular neighborhood of $K$ in $M$,
parametrized as $\partial D^2\times K$,
then the \emph{meridian} refers to the slope 
(that is, the isotopy class of an oriented essential simple closed curve)
on $\partial D^2\times K$ parallel to the first factor;
require that the parametrization makes the second factor 
null-homologous in $M\setminus K$,
then the \emph{longitude} refers to the slope parallel to the second factor.

For any rational number $p/q$,
expressed uniquely with coprime integers $p\geq0$ and $q\neq0$,
the $p/q$--\emph{surgery} of $M$ along $K$ follows
the similar standard convention as in classical knot theory.
Namely, the resulting manifold
is obtained from $M\setminus \mathrm{int}(D^2\times K)$
by Dehn filling with another solid torus,
such that the slope represented by $p$ times the meridian plus $q$ times the longitude
bounds a disk in the filling solid torus.
We denote the resulting manifold of the $p/q$--surgery of $M$ along $K$
as $M_{p/q}(K)$.

The core of the filling solid torus generates a $\Integral$--submodule 
of $H_1(M_{p/q}(K);\Integral)$, which is isomorphic to $\Integral/p\Integral$;
the quotient of $H_1(M_{p/q}(K);\Integral)$ by this submodule
is naturally isomorphic to $H_1(M;\Integral)$,
generalizing the similar fact for surgery on classical knots in $S^3$.

\begin{definition}\label{totally_rnh_def}
For any $3$--manifold $M$, 
a null-homologous knot $K\subset M$ is said to be
\emph{totally rationally null-homologous}
if there exists some compact neighborhood $R$ of $K$ in $M$,
such that $K$ is null-homologous in $R$,
and such that the inclusion map of $R$ into $M$
induces a zero homomorphism $H_1(R;\Rational)\to H_1(M;\Rational)$.
\end{definition}

If we require a stronger property that $R\to M$ is null-homotopic,
we will recover the definition of a \emph{totally null-homotopic} knot,
as appeared in \cite[Section 4]{Boileau--Wang_bundle}.
In particular, the following lemma is trivial implication
of \cite[Proposition 4.2]{Boileau--Wang_bundle}.
We include a sketch of the construction for the reader's convenience.

\begin{lemma}\label{hyperbolic_totally_rnh}
For any oriented connected closed $3$--manifold $M$,
there exists an oriented, null-homologous, 
and totally rationally null-homologous knot $K\subset M$,
such that $M\setminus K$ supports a complete hyperbolic structure of finite-volume.
Hence, for all but finitely many rational numbers $p/q$,
the surged manifold $M_{p/q}(K)$ supports a hyperbolic structure.
\end{lemma}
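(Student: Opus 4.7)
The plan is to realize $K$ as a knot sitting inside (and null-homotopic within) a small open $3$-ball $B\subset M$, which immediately handles the homological requirements, and then to appeal to a Myers-style construction together with Thurston's hyperbolic Dehn surgery theorem for the geometric conclusions. This is essentially a rephrasing of the construction of \cite[Proposition 4.2]{Boileau--Wang_bundle}, adapted to the (weaker) ``totally rationally null-homologous'' setting of Definition \ref{totally_rnh_def}.

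First, I would choose an embedded open $3$-ball $B\subset M$ and stipulate that $K\subset B$. Since $B$ is contractible, the inclusion $\overline{B}\hookrightarrow M$ is null-homotopic; taking $R=\overline{B}$ as the neighborhood in Definition \ref{totally_rnh_def}, we see that any embedded circle $K\subset B$ is null-homologous in $R$, and that $H_1(R;\Rational)\to H_1(M;\Rational)$ is the zero homomorphism. Thus $K$ is automatically null-homologous in $M$ and totally rationally null-homologous (in fact, totally null-homotopic in the sense of \cite[Section 4]{Boileau--Wang_bundle}), regardless of its isotopy type inside $B$.

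Next, I would invoke \cite[Proposition 4.2]{Boileau--Wang_bundle}, which in turn builds upon Myers' work on excellent knots, to pick a specific $K\subset B$ so that $M\setminus N(K)$ is irreducible, boundary-irreducible, anannular, atoroidal, and not Seifert-fibered. The strategy is to take $K$ to be a sufficiently complicated local knot type in $B$ (for instance, a hyperbolic knot of large bridge number with enough generic crossings), and to verify that every essential $2$-sphere, compressing disc, essential annulus, or essential torus in $M\setminus K$ can be isotoped to meet $\partial B$ in a tightly controlled pattern, which is then excluded by the chosen complexity of $K\cap B$. By Perelman's geometrization, the listed topological properties promote $M\setminus N(K)$ to a complete finite-volume hyperbolic $3$-manifold with one cusp.

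Finally, the ``hence'' clause is a direct application of Thurston's hyperbolic Dehn surgery theorem to the cusped hyperbolic manifold $M\setminus N(K)$: all but finitely many slopes $p/q\in\Rational$ on the boundary torus produce a closed hyperbolic $3$-manifold, namely $M_{p/q}(K)$. The main obstacle is the middle step, the hyperbolicity of $M\setminus K$, because the essential surfaces one must rule out need not be confined to the ball $B$ where $K$ sits; controlling their interaction with $\partial B$ is precisely what the Myers--Boileau--Wang technology is designed to do, and I do not see a way to bypass it beyond citing their construction.
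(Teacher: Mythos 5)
There is a genuine gap, and it is fatal to the route you chose: a knot $K$ contained in an embedded $3$--ball $B\subset M$ can never have hyperbolic complement unless $M\cong S^3$. Indeed, the sphere $\partial B$ lies in $M\setminus N(K)$ and separates it into $\overline{B}\setminus \mathrm{int}(N(K))$ (not a ball, since its boundary is $S^2\sqcup T^2$) and $M\setminus\mathrm{int}(B)$ (a ball only if $M\cong S^3$). Hence for every closed $M\not\cong S^3$ the sphere $\partial B$ is essential, $M\setminus K$ is reducible, and so it admits no complete finite-volume hyperbolic structure, no matter how complicated the local knot type of $K$ inside $B$ is. Your third step cannot work: the essential surface that must be ruled out is $\partial B$ itself, which is disjoint from $K$, so no Myers-style complexity of $K\cap B$ can exclude it. This is precisely why the cited construction of Boileau--Wang does not place the knot in a ball.

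The paper's proof instead embeds a wedge of $r\geq 2$ circles mapped trivially on rational $H_1$, takes a regular neighborhood $R$ that is a genus--$r$ handlebody, and uses Myers-type tricks to arrange that $M\setminus\mathrm{int}(R)$ is simple (irreducible, boundary-irreducible, atoroidal, acylindrical) and then chooses $K\subset R$ null-homologous in $R$ with $R\setminus\mathrm{int}(N(K))$ simple as well. The key point your proposal misses is that the boundary of the confining neighborhood must be a surface of genus at least $2$: only then can $\partial R$ be incompressible on both sides, so that gluing the two simple pieces along it yields an irreducible, atoroidal, non-Seifert-fibered complement, to which Thurston's hyperbolization (for Haken manifolds) applies; the final Dehn-surgery clause is then as you say. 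The homological conclusions still hold in this setting because $H_1(R;\Rational)\to H_1(M;\Rational)$ is zero by the choice of the wedge map, even though $R$ is not a ball.
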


\begin{proof}
	Take a wedge of circles $\bigvee^r S^1$ with $r\geq2$.
	Take a map $\bigvee^r S^1\to M$ which induces a zero homomorphism 
	$H_1(\bigvee^r S^1;\Rational)\to H_1(M;\Rational)$.
	The map $\bigvee^r S^1\to M$ can always be homotoped to an embedding,
	such that a compact regular neighborhood $R$ of the embedded image
	is homeomorphic to a handlebody of genus $r$,
	and such that $M\setminus\mathrm{int}(R)$ is simple
	(that is, irreducible, boundary-irreducible, atoroidal, and acylindrical).
	Take an oriented null-homologous knot $K\subset R$ 
	with a compact tubular neighborhood $D^2\times K$,
	such that $R\setminus \mathrm{int}(D^2\times K)$ is simple.
	
	By construction, $K$ is totally rationally null-homologous in $M$.
	Moreover,
	$M\setminus K$ supports a complete hyperbolic structure of finite volume,
	by Thurston's hyperbolization theorem.
	Hence,
	for all but finitely many rational numbers $p/q$,
	the surged manifold $M_{p/q}(K)$ supports a hyperbolic structure,
	by Thurston's hyperbolic Dehn filling theorem.
	
	In our sketched construction above,
	the homotopic modifications for ensuring simplicity of the resulting manifolds
	can be done by (repeatedly) using tricks due to Myers.
	See \cite[Proof of Proposition 4.2]{Boileau--Wang_bundle} 
	for more detail.
\end{proof}

\begin{lemma}\label{surgery_Alexander}
Let $M$ be an oriented connected closed irreducible $3$--manifold.
Suppose that $K\subset M$ is 
an oriented, null-homologous, and totally rationally null-homologous knot.
For any coprime integers $p\geq0$ and $q\neq0$,
denote by $M_{p/q}(K)$ the $p/q$--surgery of $M$ along $K$.

If $p/q\neq0$,
then there is a canonical isomorphism $H^1(M_{p/q}(K);\Integral)\cong H^1(M;\Integral)$,
and accordingly,
the following equality of Alexander polynomials holds 
for any primitive cohomology class $\psi\in H^1(M;\Integral)$.
$$\Delta_{M_{p/q}(K)}^\psi(t)\doteq p\cdot\Delta_{M}^\psi(t)$$
Here, the dotted equal symbol means
an equality in $\Integral[t,t^{-1}]$ up to a monic factor and a sign.
\end{lemma}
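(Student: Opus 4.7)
The plan is to reduce everything to the geometry of the knot exterior $X=M\setminus\operatorname{int}(V)$, where $V\cong D^2\times S^1$ is a compact tubular neighborhood of $K$, so that $M=X\cup V$ and $M_{p/q}(K)=X\cup V'$ differ only by the gluing of a solid torus along $\partial X\cong T^2$. A Seifert surface $\Sigma$ for $K$ lies inside the totally rationally null-homologous neighborhood $R$; intersecting with $X$ gives a properly embedded surface $\Sigma_X\subset R\cap X$ whose boundary is the longitude $\lambda$. In particular, $\lambda=0$ in $H_1(X;\Integral)$.

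For the canonical isomorphism of $H^1$, I would use the long exact sequences of the pairs $(M,X)$ and $(M_{p/q}(K),X)$ together with excision. The groups $H^1(V,\partial V)\cong H_2(V)=0$ vanish, so both restriction maps to $H^1(X;\Integral)$ are injective. The image of $H^1(M;\Integral)$ consists of those $\phi\in H^1(X;\Integral)$ with $\phi(\mu)=0$, since the meridian $\mu$ bounds in $V$; the image of $H^1(M_{p/q}(K);\Integral)$ consists of those $\phi$ with $\phi(p\mu+q\lambda)=0$. Using $\phi(\lambda)=0$ automatically (as $\lambda=0$ in $H_1(X;\Integral)$) and $p\neq 0$, the latter condition reduces to $\phi(\mu)=0$, so the two images coincide.

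For the Alexander polynomial equality, I would pass to the infinite cyclic cover $\tilde X^\psi$ of $X$ corresponding to $\psi|_X$, working over $\Lambda=\Integral[t,t^{-1}]$. Since $\mu$ and $\lambda$ are null-homologous in $M$, both lift to classes $\mu_{\tilde X},\lambda_{\tilde X}\in H_1(\tilde X^\psi)$. The totally rationally null-homologous hypothesis yields $\psi|_R=0$, so $\tilde R^\psi\to R$ is a trivial cover; consequently $\Sigma_X$ lifts to a properly embedded surface $\tilde\Sigma_X\subset\tilde X^\psi$, witnessing $\lambda_{\tilde X}=0$. A long exact sequence argument using excision and $\tilde V^\psi\cong V\times\Integral$ then identifies $H_1(\tilde M^\psi)\cong H_1(\tilde X^\psi)/\Lambda\mu_{\tilde X}$ and $H_1(\widetilde{M_{p/q}(K)}{}^{\psi'})\cong H_1(\tilde X^\psi)/p\Lambda\mu_{\tilde X}$, where $\psi'$ corresponds to $\psi$ under the canonical isomorphism.

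The main obstacle is to establish $\operatorname{Ann}_\Lambda(\mu_{\tilde X})=0$, which would give $\Lambda\mu_{\tilde X}\cong\Lambda$ and hence $\Lambda\mu_{\tilde X}/p\Lambda\mu_{\tilde X}\cong\Lambda/p\Lambda$, a module of order $p$. The approach is via the equivariant $\Lambda$-valued intersection pairing between $H_1$ and $H_2(\,\cdot\,,\partial)$. The meridian $\mu$ meets $\Sigma_X$ transversely in exactly one point in $X$, and this intersection lifts uniquely to one transverse point of $\mu_{\tilde X}$ with a fixed $\Lambda$-translate of $\tilde\Sigma_X$, all other translates being disjoint from $\mu_{\tilde X}$. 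Hence $\langle\mu_{\tilde X},\tilde\Sigma_X\rangle_\Lambda=\pm 1$ is a unit in $\Lambda$, forcing $\operatorname{Ann}_\Lambda(\mu_{\tilde X})=0$ by the sesquilinearity of the pairing. Finally, multiplicativity of orders in the short exact sequence $0\to\Lambda/p\Lambda\to H_1(\tilde X^\psi)/p\Lambda\mu_{\tilde X}\to H_1(\tilde X^\psi)/\Lambda\mu_{\tilde X}\to 0$ yields $\Delta_{M_{p/q}(K)}^{\psi'}(t)\doteq p\cdot\Delta_M^\psi(t)$; the case of vanishing $\Delta_M^\psi(t)$ is subsumed, since both sides then equal zero by rank considerations in the same sequence.
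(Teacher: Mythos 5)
Your proposal is correct and follows essentially the same route as the paper: decompose along the knot exterior, use the totally rationally null-homologous hypothesis to lift a Seifert surface to the cyclic cover of $X$ so that intersection pairing shows the lifted meridian generates a free $\Integral[t,t^{-1}]$--submodule, identify $H_1$ of the covers of $M$ and $M_{p/q}(K)$ as quotients by that submodule and by $p$ times it, and conclude by multiplicativity of orders in the resulting short exact sequence (with the vanishing case handled by rank). The only cosmetic difference is that you obtain the canonical $H^1$ isomorphism by a separate Mayer--Vietoris/duality argument, whereas the paper extracts it from the $m=1$ case of the same exact sequence it uses for all finite cyclic covers.
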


\begin{proof}
	Take an embedded compact tubular neighborhood $D^2\times K$ of $K$ in $M$,
	with product parametrization consistent with a meridian-longitude pair.
	Denote $X=M\setminus \mathrm{int}(D^2\times K)$.
	For any primitive cohomology class $\psi\in H^1(M;\Integral)$, 
	and for any positive integer $m$,
	the $m$--cyclic cover $\tilde{M}^\psi_m$ of $M$ dual to $\psi$
	pulls back to an $m$--cyclic cover $\tilde{X}^\psi_m$ of $X$.
	
	Since $K$ is null-homologous, the tubular neighborhood $D^2\times K$
	has exactly $m$ lifts to $\tilde{M}^\psi_m$,
	the complement of their interiors is by definition $\tilde{X}^\psi_m$.	
	Since $K$ is totally rationally null-homologous,
	we can find some oriented compact surface $F$ bounded by $K$
	in a rationally null-homologous neighborhood $R$ of $K$ in $M$.
	Therefore, the $m$ lifts of $K$ to $\tilde{M}^\psi_m$
	are bounded by $m$ lifts of $F$ to $\tilde{M}^\psi_m$.
	It follows that 
	the $m$ lifts of the meridian of $K$ (on $\partial D^2\times K$)
	represent $\Integral$--linearly independent elements in 
	$H_1(\tilde{X}^\psi_m;\Integral)$,
	as is evident by intersection pairing
	with $H_2(\tilde{X}^\psi_m,\partial \tilde{X}^\psi_m;\Integral)$.
	
	Viewing $H_2(\tilde{X}^\psi_m;\Integral)$ as a $\Integral[\Integral/m\Integral]$--module
	with respect to the deck tranformation group $\Integral/m\Integral$,
	these elements span a free $\Integral[\Integral/m\Integral]$--submodule
	$L_m\cong\Integral[\Integral/m\Integral]$,
	so we obtain a natural isomorphism of $\Integral[\Integral/m\Integral]$--modules:
	$$H_1(M^\psi_m;\Integral)\cong H_1(\tilde{X}^\psi_m;\Integral)/L_m,$$
	by the van Kampen theorem and abelianization of fundamental groups.
	
	For any rational number $p/q$, denote for simplicity
	$$N=M_{p/q}(K).$$
	By simutaneously Dehn filling the lifted $p/q$--slopes on $\partial \tilde{X}^\psi_m$,
	we obtain an $m$--cyclic cover $\tilde{N}^{\psi}_m$ of $N$,
	and a natural isomorphism of $\Integral[\Integral/m\Integral]$--modules:
	$$H_1(N^\psi_m;\Integral)\cong H_1(\tilde{X}^\psi_m;\Integral)/p\cdot L_m.$$
	Therefore, we obtain a natural short exact sequence of $\Integral[\Integral/m\Integral]$--modules:
	$$\xymatrix{
	0 \ar[r] & L_m/p\cdot L_m \ar[r] & H_1(\tilde{N}^\psi_m;\Integral) \ar[r] 
	& H_1(\tilde{M}^\psi_m;\Integral) \ar[r] & 0
	}.$$
	In particular, for $p\neq0$, 
	this implies a canonical isomorphism
	$$H^1(N;\Integral)\cong H^1(M;\Integral),$$
	applying with $m=1$.
	With respect to this canonical isomorphism,
	$N^\psi_m$ is the $m$--cyclic cover of $N$ dual to $\psi$.
	
	Note that the similar argument also works for the infinite cyclic cover of $M$ dual to $\psi$,
	namely, $m=\infty$.
	This yields 
	a natural short exact sequence of modules over $\Integral[\Integral]\cong\Integral[t,t^{-1}]$:
	$$\xymatrix{
	0 \ar[r] & L_\infty/p\cdot L_\infty \ar[r] & H_1(\tilde{N}^\psi_\infty;\Integral) \ar[r] 
	& H_1(\tilde{M}^\psi_\infty;\Integral) \ar[r] & 0
	}.$$
	
	Suppose $p/q\neq0$.
	Then the $\Integral[t,t^{-1}]$--module 
	$L_\infty/p\cdot L_\infty\cong \Integral/p\Integral\otimes_\Integral \Integral[t,t^{-1}]$ 
	is obviously torsion with order $p$, up to a unit of $\Integral[t,t^{-1}]$.
	By the above short exact sequence over  $\Integral[t,t^{-1}]$,
	$H_1(M^\psi_\infty;\Integral)$ is torsion
	if and only if $H_1(\tilde{N}^\psi_\infty;\Integral)$ is torsion.
	Moreover, in the torsion case,
	the asserted equality of Alexander polynomials
	$$\Delta_{N}^\psi(t)\doteq p\cdot\Delta_{M}^\psi(t)$$
	holds in $\Integral[t,t^{-1}]$ up to a unit of $\Integral[t,t^{-1}]$.
	In the nontorsion case,
	the asserted equality also holds,
	as $\Delta_{N}^\psi(t)$ and $\Delta_{M}^\psi(t)$ both vanish.
\end{proof}

\begin{remark}\label{surgery_Alexander_remark}
	For any oriented connected closed irreducible $3$--manifold $M$
	and any oriented null-homologous knot $K\subset M$,
	there is a canonical isomorphism
	$H^1(M_{0}(K);\Integral)\cong H^1(M\setminus K;\Integral)$.
	Accordingly,
	for any primitive cohomology class $\psi\in H^1(M\setminus K;\Integral)$,
	one can figure out the following formula
	$$\Delta_{M\setminus K}^\psi(t)\cdot\left(t^{\psi(\mu)}-1\right)
	\doteq \Delta^\psi_{M_0(K)}(t)\cdot
	\begin{cases}
	(t-1) &b_1(M\setminus K)\geq2\\
	(t-1)^2 &b_1(M\setminus K)=1
	\end{cases}$$
	where $\mu\in H_1(M\setminus K;\Integral)$ is represented by the meridian of $K$ in $M$.
	See \cite[Theorem F]{Turaev_RA}, \cite[Section 3.3]{Friedl--Vidussi_survey}, 
	and \cite[Proposition 2.3]{Sun_vhsr} for facts to derive the formula.	
	
	This formula determines $\Delta^\psi_{M_0(K)}(t)$ unless $\psi(\mu)=0$.
	If $\psi(\mu)=0$,
	and if $K$ is totally rationally null-homologous in $M$,
	then $\psi$ lies in the image of 
	the natural homomorphism $H^1(M;\Integral)\to H^1(M\setminus K;\Integral)$.
	In this case, the finite cyclic covers of $M_0(K)$ dual to $\psi$
	have unbounded first Betti number, 
	implying $\Delta^\psi_{M_0(K)}(t)\doteq0$ by Lemma \ref{cyclic_Betti}.
\end{remark}

\section{Examples}\label{Sec-examples}
In this section,
we prove Theorems \ref{main_example_full} and \ref{main_example_partial}
by exhibiting Examples \ref{example_full} and \ref{example_partial},
respectively.
We also prove a simple observation as mentioned in the introduction,
namely, 
any counter example to the vanishing Alexander polynomial conjecture (Conjecture \ref{vanishing_AP})
leads to an example of the virtual Euler class one conjecture (Conjecture \ref{virtual_ecoc})
with first Betti number $\geq4$ (Proposition \ref{vanishing_AP_negative}).

\begin{example}\label{example_full}
	For any integer $e$,
	let $N=N(e)$ be an oriented circle bundle over an oriented torus with Euler number $e$.
	We observe 
	$$b_1(N)=
	\begin{cases}
	2 & e\neq0 \\
	3 & e=0
	\end{cases}
	$$
	Note that any primitive cohomology class $\psi\in H^1(N;\Integral)$ is fibered.
	With respect to $\psi$, the dual surface fiber of $N$ is an oriented torus.
	The forward monodromy is the identity, if $e=0$;
	or it is the (right-hand) Dehn twist raised to the power $e$,
	along the slope represented by the circle-bundle circle, if $e\neq0$.
	Therefore, for any primitive cohomology class $\psi\in H^1(N;\Integral)$,
	we observe
	$$\Delta_N^\psi(t)\doteq(t-1)^2$$
	as an equality in $\Integral[t,t^{-1}]$ up to a monomial factor and a sign.
	
	Take some oriented, null-homologous, totally rationally null-homologous, and hyperbolic knot $K\subset N$,
	and some hyperbolic surgery of $N$ along $K$ with rational coefficient $p/q\neq0$, 
	as guaranteed to exist by Lemma \ref{hyperbolic_totally_rnh}.
	Denote the resulting oriented closed hyperbolic $3$--manifold as $M=N_{p/q}(K)$.
	
	By Lemma \ref{surgery_Alexander},
	we obtain $b_1(M)=b_1(N)$, equal to either $2$ or $3$,
	and $\Delta^\psi_M(t)\doteq p\cdot\Delta^\psi_N(t)$,
	nonvanishing for any primitive cohomology class $\psi\in H^1(M;\Integral)$.
	By Theorem \ref{main_Alexander} and Corollary \ref{main_Alexander_corollary},
	every rational point $w$ on $\partial\mathcal{B}_{\mathrm{Th}^*}(M)$
	is virtually realizable by a taut foliation.
	Indeed, the pullback $\tilde{w}$ of $w$ to some cyclic finite cover $\tilde{M}$ of $M$
	dual to some primitive cohomology class in $H^1(M;\Integral)$
	is the real Euler class of some transversely oriented taut foliation on $\tilde{M}$.
\end{example}

\begin{example}\label{example_partial}
	For any integers $g\geq2$ and $0\leq s \leq g$,
	let $N=N(g,s)$ be an oriented surface bundle over an oriented circle,
	such that the surface fiber $S$ is connected closed of genus $g$,
	and the forward monodromy $f\colon S\to S$ 
	is the product of the Dehn twists
	along $s$ mutually disjoint simple closed curves
	of nonseparate union.
	We observe 
	$$b_1(N)=1+2g-s.$$
	In fact, $H^1(N;\Integral)$	splits 
	as the direct sum of fixed $\Integral$--submodule 
	$\mathrm{Fix}(f^*)\cong\Integral^{2g-s}$
	of the induced homomorphism $f^*\colon H^1(S;\Integral)\to H^1(S;\Integral)$
	and $\Integral\cdot\mathrm{PD}([S])\cong\Integral$.
	Moreover, the Thurston norm $\|\cdot\|_{\mathrm{Th}}$
	of $N$ vanishes on $\mathrm{PD}(\mathrm{Fix}(f^*))$
	and satisfies $\|[S]\|_{\mathrm{Th}}=2g-2$.
	Treating $N$ as the mapping torus of $f\colon S\to S$,
	the suspension flow through any point on $S$ outside the Dehn-twist annuli
	yields a $1$--periodic trajectory,
	representing a homology class $\Gamma\in H_1(N;\Integral)$.
	From the above description of $\|\cdot\|_{\mathrm{Th}}$,
	the dual Thurston norm unit ball $\mathcal{B}_{\mathrm{Th}^*}(N)$
	is contained in the linear subspace $\Real\cdot\Gamma$ of $H_1(N;\Real)$,
	as the closed interval with endpoints $\pm(2g-2)\cdot\Gamma$.
	
	The Alexander polynomial of $N$ 
	with respect to the distinguished fibered class 
	$\phi=\mathrm{PD}([S])$ in $H^1(N;\Integral)$
	can be computed as the characteristic polynomial of $f_*$,
	yielding $\Delta^\phi_N(t)\doteq(t-1)^{2g}$.
	The Newton polytope of $\Delta^\#_N$ has to be a segment
	due to the shape of $\mathcal{B}_{\mathrm{Th}^*}(N)$,
	so we infer $\Delta^\#_N\doteq(\Gamma-1)^{2g-2}$
	in $\Integral[H]$ up to a unit (see Theorem \ref{MAP_properties}).
	For any primitive cohomology class $\psi\in H^1(N;\Integral)$, we obtain
	$$\Delta^\psi_N(t)\doteq \left(t^{\langle\psi,\Gamma\rangle}-1\right)^{2g-2}\cdot(t-1)^2.$$
	
	Take $M=N_{p/q}(K)$ to be a hyperbolic surgery of $N$ with rational coefficient $p/q\neq0$,
	along an oriented, null-homologous, totally rationally null-homologous, and hyperbolic knot $K\subset N$
	(Lemma \ref{hyperbolic_totally_rnh}).
	Note that $b_1(M)=b_1(N)$ (Lemma \ref{surgery_Alexander})
	can be any integer $b=1+2g-s\geq3$,	by suitably setting $g$ and $s$.
	
	For any primitive cohomology class $\psi$
	in $H^1(M;\Integral)\cong H^1(N;\Integral)$,
	we obtain $\Delta^\psi_M(t)\doteq p\cdot\Delta^\psi_N(t)$ (Lemma \ref{surgery_Alexander}),
	vanishing if and only if $\psi$ lies in $\mathrm{Fix}(f^*)$.
	Since $M$ is hyperbolic,
	$\partial\mathcal{B}_{\mathrm{Th}}(M)$
	must have some pair of opposite vertices lying outside
	the codimension--$1$ linear subspace
	$\mathrm{PD}(\mathrm{Fix}(f^*))\otimes_\Integral\Real$ of $H_2(M;\Real)$.
	These vertices are dual to an opposite pair of top-dimensional closed faces
	$\pm F\subset\partial\mathcal{B}_{\mathrm{Th}^*}(M)$.
	Theorem \ref{main_Alexander} and Corollary \ref{main_Alexander_corollary}
	apply to $\pm F$,
	since we can use a primitive cohomology class $\psi\in H^1(M;\Integral)$
	with $\mathrm{PD}(\psi)$ an integral multiple of the above vertices.
	Therefore,
	every rational point $w\in\pm F$
	is virtually realizable by a taut foliation.
\end{example}

\begin{proposition}\label{vanishing_AP_negative}
	If there exists a counter example to Conjecture \ref{vanishing_AP} for some $b\geq4$,
	then there exists some oriented closed hyperbolic $3$--manifold $M$ with $b_1(M)=b$,
	such that every rational point $w\in\partial\mathcal{B}_{\mathrm{Th}^*}(M)$ 
	is virtually realizable by a taut foliation, and indeed,
	by similar means as specified in Theorem \ref{main_example_full}.
\end{proposition}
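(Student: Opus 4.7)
The plan is to mimic the construction of Example \ref{example_full}, substituting the hypothetical counter example for the circle-bundle seed $N(e)$. Let $N$ be an orientable connected closed $3$-manifold with $b_1(N) = b$ such that $\Delta^\psi_N(t) \neq 0$ for every primitive cohomology class $\psi \in H^1(N;\Integral)$.

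First, I would reduce to the case that $N$ is irreducible. If $N \cong N_1 \# N_2$ is a non-trivial prime decomposition with $b_1(N_1), b_1(N_2) \geq 1$, then extending a primitive class in $H^1(N_1;\Integral)$ by zero produces a primitive $\psi \in H^1(N;\Integral)$. A Mayer--Vietoris analysis of the infinite cyclic cover shows that $H_1(\tilde{N}^\psi;\Integral)$ contains the $\Integral[t,t^{-1}]$-module $H_1(N_2;\Integral) \otimes_\Integral \Integral[t,t^{-1}]$ as a direct summand, arising from infinitely many lifts of the punctured $N_2$; this summand has positive $\Integral[t,t^{-1}]$-rank, forcing $\Delta^\psi_N(t) = 0$, a contradiction. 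Hence in any prime decomposition of a counter example at most one summand carries first Betti number, and the same analysis over $\Rational$ shows that prime summands with $b_1 = 0$ contribute only $\Integral[t,t^{-1}]$-torsion to the first homology of the infinite cyclic cover, leaving the non-vanishing property undisturbed. Iterating replaces $N$ with an irreducible counter example of the same first Betti number $b$.

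Next, by Lemma \ref{hyperbolic_totally_rnh}, I would pick an oriented, null-homologous, totally rationally null-homologous knot $K \subset N$ with hyperbolic complement, and then use Thurston's hyperbolic Dehn filling theorem to select a rational surgery coefficient $p/q$ with $p \geq 1$ and $q \neq 0$ such that $M := N_{p/q}(K)$ is hyperbolic. By Lemma \ref{surgery_Alexander}, the canonical isomorphism $H^1(M;\Integral) \cong H^1(N;\Integral)$ gives $b_1(M) = b$, and for every primitive $\psi \in H^1(M;\Integral)$ one has $\Delta^\psi_M(t) \doteq p \cdot \Delta^\psi_N(t) \neq 0$, since $p \geq 1$ and $N$ is a counter example.

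Finally, given any rational point $w \in \partial \mathcal{B}_{\mathrm{Th}^*}(M)$, let $F$ be the minimal closed face of $\partial \mathcal{B}_{\mathrm{Th}^*}(M)$ containing $w$ in its relative interior, with dual face $F^\vee \subset \partial \mathcal{B}_{\mathrm{Th}}(M)$. Choose any primitive $\psi \in H^1(M;\Integral)$ whose Poincar\'e dual lies in the cone over $\mathrm{int}(F^\vee)$; such $\psi$ exists because rational points are dense in $\mathrm{int}(F^\vee)$ and each rational ray contains a primitive integral point. Since $\Delta^\psi_M(t) \neq 0$, Theorem \ref{main_Alexander} produces a finite cyclic cover $\tilde{M} \to M$ dual to $\psi$ together with a transversely oriented taut foliation whose real Euler class is the pullback of $w$, which is precisely the form of virtual realization asserted in Theorem \ref{main_example_full}. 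The main obstacle in this plan is the irreducibility reduction via Mayer--Vietoris over connected sums of infinite cyclic covers; once that is secured, the remaining steps assemble directly from Lemmas \ref{hyperbolic_totally_rnh} and \ref{surgery_Alexander} and Theorem \ref{main_Alexander}.
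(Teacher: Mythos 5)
Your proposal is correct and follows essentially the same route as the paper: surgery along a totally rationally null-homologous hyperbolic knot (Lemma \ref{hyperbolic_totally_rnh}), preservation of $b_1$ and of nonvanishing Alexander polynomials under nonzero surgery (Lemma \ref{surgery_Alexander}), and then Theorem \ref{main_Alexander}. The only deviations are your preliminary irreducibility reduction (the paper applies Lemma \ref{surgery_Alexander} to the counterexample directly, its irreducibility hypothesis not being used in its proof) and your direct application of Theorem \ref{main_Alexander} to the minimal face containing $w$ in its relative interior in place of the paper's appeal to Corollary \ref{main_Alexander_corollary}; both are harmless.
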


\begin{proof}
	Suppose that $N$ is a counter example to Conjecture \ref{vanishing_AP} for some integer $b\geq4$.
	This means that 
	$N$ is an oriented connected closed $3$--manifold with $b_1(N)=b$,
	such that for any primitive cohomology class $\psi\in H^1(N;\Integral)$,
	the Alexander polynomial $\Delta^\phi_N(t)$ does not vanish.
	Under this hypothesis,
	$N$ can take the place of the synonymous manifold in Example \ref{example_full},
	and the similar argument remains valid.
	Namely, 
	we apply Lemma \ref{hyperbolic_totally_rnh}
	to obtain a hyperbolic surgery of $N$ with rational coefficient $p/q\neq0$,
	along an oriented, null-homologous, totally rationally null-homologous, and hyperbolic knot $K\subset N$.
	The resulting oriented closed hyperbolic $3$--manifold
	$M=N_{p/q}(K)$ with $b_1(M)=b$
	satisfies the asserted property in Proposition \ref{vanishing_AP_negative},
	by Lemma \ref{surgery_Alexander}, and Theorem \ref{main_Alexander}, and Corollary \ref{main_Alexander_corollary}.
\end{proof}

\bibliographystyle{amsalpha}

\begin{thebibliography}{}
\bibitem[Ago13]{Agol_VHC} 
I.~Agol,
\textit{The virtual Haken conjecture}, with an appendix by I.~Agol, D.~Groves, and J.~F.~Manning,
Documenta Math.~\textbf{18} (2013), 1045--1087.

\bibitem[BoiW96]{Boileau--Wang_bundle}
M.~Boileau and S.~Wang,
\textit{Non-zero degree maps and surface bundles over $S^1$}.
J.~Diff.~Geom.~\textbf{43} (1996), 789--806.

\bibitem[CanC03]{Candel--Conlon_book_ii}
A.~Candel and L.~Conlon,
\textit{Foliations II}.
Grad.~Stud.~Math., 60
American Mathematical Society, Providence, RI, 2003.

\bibitem[CocM06]{Cochran--Masters}
T.~D.~Cochran and J.~Masters,
\textit{The growth rate of the first Betti number in abelian covers of 3-manifolds}.
Math.~Proc.~Cambridge Philos.~Soc.~\textbf{141} (2006), 465--476.

\bibitem[FriV11]{Friedl--Vidussi_survey}
S.~Friedl and S.~Vidussi,
``A survey of twisted Alexander polynomials''.
In: \textit{The Mathematics of Knots}, pp.~45--94.
Contrib.~Math.~Comput.~Sci., 1
Springer, Heidelberg, 2011.

\bibitem[Gab83]{Gabai_taut}
D.~Gabai,
\textit{Foliations and the topology of $3$--manifolds},
J.~Differential Geom.~\textbf{18} (1983), 445--503.

\bibitem[GabY20]{Gabai--Yazdi}
D.~Gabai and M.~Yazdi,
\textit{The fully marked surface theorem}. 
Acta Math. \textbf{225} (2020), 369--413.


\bibitem[L\"uc02]{Lueck_book}
W.~L\"uck,
\textit{$L^2$-Invariants: Theory and Applications to Geometry and $K$-Theory}.
Ergeb.~Math.~Grenzgeb.~(3), 44,
Springer-Verlag, Berlin, 2002.

\bibitem[Mcm02]{McMullen_norm}
C.~T.~McMullen,
\textit{The Alexander polynomial of a 3-manifold and the Thurston norm on cohomology}.
Ann.~Sci.~\'Ecole Norm.~Sup.~(4) \textbf{35} (2002), 153--171.

\bibitem[Sun17]{Sun_vhsr}
H.~Sun,
\textit{Virtual homological spectral radius and mapping torus of pseudo-Anosov maps}.
Proc.~Amer.~Math.~Soc.~\textbf{145} (2017), 4551--4560.

\bibitem[Thu86]{Thurston_paper_norm}
W.~P.~Thurston, 
\textit{A norm for the homology of 3-manifolds}, 
Mem.~Amer.~Math.~Soc.~\textbf{59} (1986), 99--130.

\bibitem[Tur76]{Turaev_RA}
V.~G.~Turaev,
\textit{Reidemeister torsion and the Alexander polynomial},
English translation by R.~A.~Jensen.
Mat.~Sb.~(N.S.) \textbf{18}(66) (1976), no.~2, 252--270.

\bibitem[Yaz20]{Yazdi_ecoc}
M.~Yazdi,
\textit{On Thurston's Euler class-one conjecture}.
Acta Math.~\textbf{225} (2020), 313--368.

\end{thebibliography}

\end{document}